\newtheorem{theorem}{Theorem}[section]
\newtheorem{proposition}[theorem]{Proposition}
\newtheorem{lemma}[theorem]{Lemma}
\newtheorem{remark}[theorem]{Remark}
\newcommand{\beq}{\begin{equation}}
\newcommand{\eeq}{\end{equation}}
\newcommand{\beqa}{\begin{eqnarray}}
\newcommand{\eeqa}{\end{eqnarray}}
\newcommand{\bit}{\begin{itemize}}
\newcommand{\eit}{\end{itemize}}
\newcommand{\bedef}{\begin{defn}}
\newcommand{\edefn}{\end{defn}}
\newcommand{\bpro}{\begin{prop}}
\newcommand{\epro}{\end{prop}}
\newcommand{\mE}{{\mathcal E}}
\newcommand{\mT}{{\mathcal T}}
\newcommand{\mG}{{\mathcal G}}
\newcommand{\mU}{{\mathcal U}}
\newcommand{\OO}{{\Omega}}
\newcommand{\Ox}{{\Omega_x}}
\newcommand{\Ov}{{\Omega_v}}
\newcommand{\Kx}{{K_x}}
\newcommand{\stab}{{\bf{STAB}}}
\newcommand{\ene}{{\bf N}}
\title{Error Estimates of Runge-Kutta Discontinuous Galerkin Methods for the Vlasov-Maxwell System }
\author{He Yang and Fengyan Li \footnote{This research is partially supported by NSF CAREER award DMS-0847241 and DMS-1318409. Address: Department of Mathematical Sciences, Rensselaer Polytechnic Institute, Troy, NY   12180-3590, United States.
Email: yangh8@rpi.edu, lif@rpi.edu. }}
\begin{document}


\baselineskip=13pt


\maketitle


\begin{abstract}

In this paper, error analysis is established for Runge-Kutta discontinuous Galerkin (RKDG) methods to solve the Vlasov-Maxwell system.
This nonlinear hyperbolic system  describes the time evolution of collisionless plasma particles of a single species under the self-consistent electromagnetic field, and it models
many phenomena in both laboratory and astrophysical plasmas.
The methods involve a third order TVD Runge-Kutta discretization in time and upwind discontinuous Galerkin discretizations of arbitrary order in phase domain.
With the assumption that the exact solution has sufficient regularity, the $L^2$ errors of the particle number density function as well as electric and magnetic fields at any given time $T$ are
bounded by $C h^{k+\frac{1}{2}}+C\tau^3$ under a CFL condition
$\tau /h \leq \gamma$. Here $k$ is the polynomial degree used in phase space discretization,
satisfying $k \geq \left \lceil \frac{d_x + 1}{2} \right \rceil$ (the smallest integer greater than or equal to $\frac{d_x+1}{2}$, with
 $d_x$ being the dimension of spatial domain),
$\tau$ is the time step, and $h$ is the maximum mesh size in phase space.  Both $C$ and $\gamma$ are positive constants independent of $h$ and $\tau$, and they may depend on
the polynomial degree $k$, time $T$, the size of the phase domain,  certain mesh parameters, and some Sobolev norms of the exact solution. The analysis can
be extended to RKDG methods with other numerical fluxes and to RKDG methods solving relativistic Vlasov-Maxwell equations.

\end{abstract}

{\bf Keywords:} Vlasov-Maxwell system, Runge-Kutta discontinuous Galerkin methods, error estimates

{\bf AMS(MOS) subject classification:} 65M15, 	65M60 , 65M06,   35Q83,  35L50

\section{Introduction}
\label{intro}

In this paper, we will establish error estimates of the Runge-Kutta discontinuous Galerkin (RKDG) methods for solving the dimensionless Vlasov-Maxwell (VM) equations
\begin{equation}
 \label{eq1:vm1}
 \left\{
 \begin{aligned}
 &\partial_t f + v\cdot \nabla_x f + (E+v\times B)\cdot \nabla_v f = 0, \\
 &\partial_t E = \nabla \times B -J, \quad \partial_t B = -\nabla \times E,\\
 & \nabla \cdot E = \rho - \rho_i, \quad \nabla \cdot B = 0,
 \end{aligned}
 \right.
\end{equation}
with
\begin{equation}
 \label{eq2:vm2}
  \rho(x,t)=\int_{\Omega_v}f(x,v,t)dv, \quad J(x,t)=\int_{\Omega_v}f(x,v,t)vdv.
\end{equation}
This system  describes the time evolution of collisionless plasma particles of a single species, such as electrons or ions, under the self-consistent electromagnetic field.
Here $f(x,v,t) \geq 0$ is the particle number density function in the phase space with $(x, v)$ at time $t$, $E(x,t)$ is the electric field, $B(x,t)$ is the magnetic field,
$J(x,t)$ is the current density, $\rho(x,t)$ is the charge density, and $\rho_i$ is the charge density of the background particles.
The system \eqref{eq1:vm1} is defined  on the phase domain $\Omega = \Omega_x \times \Omega_v$, where $\Omega_x = [L_{x,1}, L_{x,2}]^{d_x}$ is the spatial domain and
$\Omega_v=[L_{v,1}, L_{v,2}]^{d_v}$ is the velocity domain ($d_x, d_v = 1,2$ or $3$), with periodic boundary conditions in $x$. In $v$ direction, $f$ is assumed to have compact support.
We further assume the VM system is globally neutral, i.e. $\int_{\Omega_x} (\rho - \rho_i) dx = 0$. Note that this is compatible with the periodic boundary conditions in $x$.

The VM equations model many phenomena in both laboratory and astrophysical plasmas, and accurate and reliable numerical simulation of this system has fundamental importance.
 Particle methods \cite{BL1985, Cottet-Raviart, Jacob-Hesthaven} have been widely used since 60's because of their  low computational cost especially when the dimension of the phase space is high, yet with their numerical noise, it is hard for the methods to produce very accurate approximations. In recent years, many high order Eulerian methods have been developed in the context of Vlasov-Poisson equations or the VM equations. Some examples include semi-Lagrangian methods \cite{CK1976, SRBG1999, PM2008, Qiu-Shu:WENO}, continuous finite element methods \cite{ZGB1988, ZBG1988}, and methods based on Fourier transform \cite{KF1994, Eliasson:2003,Eliasson:2011}.

In \cite{CGLM}, one of the authors and her collaborators proposed and analyzed semi-discrete discontinuous Galerkin (DG) methods for the VM system.
The methods were further combined with Runge-Kutta time discretizations, resulting in Runge-Kutta discontinuous Galerkin (RKDG) methods, and their performance in accuracy, stability, and conservation
 was demonstrated numerically.
Note that DG methods were previously proposed and studied in \cite{Heath-Gamba-Morrison-Michler,Ayuso-Carrillo-Shu:1d,Ayuso-Carrillo-Shu:multid,Cheng-Gamba:2011}
for the Vlasov-Poisson system.
 DG discretizations are chosen for the phase domain in \cite{CGLM} due to their high accuracy, compactness,
high efficiency in parallel implementations,  flexibility with complicated geometry as well as boundary conditions and adaptive simulations.
All aforementioned properties make the methods a competitive candidate to simulate the VM system accurately with reasonable computational cost
especially for lower dimensional cases (e.g. the 1D2V or 2D2V system).
This is even so if one further makes good use of the modern computer architectures. More importantly,
with properly designed numerical fluxes and up to some boundary effect, semi-discrete DG methods have {\em provable} conservation property
of both mass and total energy, and this is shared by very few high order methods.
  On the other hand, though RKDG methods have been widely used in many applications since they were introduced  \cite{CS1991, CS1989},
 theoretical analysis for such fully discrete methods is relatively little.
 Error estimations based on Fourier analysis and some symbolic computations were carried out in \cite{ZhongShu:2011} and \cite{YangLiQiu}
 when RKDG methods are applied to linear problems on uniform meshes. Important developments were made by Zhang and Shu in  \cite{ZS2004, ZS2006, ZS2010},
  where error estimates to the smooth solutions on uniform or non-uniform meshes were developed for RKDG methods with the second order RK time discretization for scaler \cite{ZS2004} and
  symmetrizable conservation laws \cite{ZS2006}, and with  the third order RK method for scaler conservations laws in \cite{ZS2010}. In \cite{ZS2010},
  $L^2$-norm stability was also obtained for linear conservation laws, and such analysis so far is unavailable for nonlinear cases.
In this paper, we use the idea in \cite{ZS2010} to obtain the error estimates of the fully discrete RKDG methods for the VM system when the exact solutions have sufficient regularity.
 In particular, a third order TVD Runge-Kutta method \cite{ShuOsher:1988} is considered as time discretization
 together with DG discretizations of arbitrary accuracy in phase domain. Second order Runge-Kutta time discretization is not used due to
 their otherwise restrictive limitation on timestep  when the spatial accuracy is higher than second order \cite{ZS2004, ZS2006}.
 The analysis is based on Taylor expansion, energy analysis, some techniques for analyzing the semi-discrete DG methods of the VM system \cite{CGLM} and for analyzing the third order Runge-Kutta time discretization within the method of lines framework.
 To treat the nonlinearity due to the nonlinear coupling of the Vlasov and Maxwell parts, the polynomial degree $k$ is required to satisfy $k \geq \left \lceil \frac{d_x + 1}{2} \right \rceil$.
 In addition, a priori assumption is made for the $L^\infty$ error of
 the electric and magnetic fields. This assumption will be  proved later by mathematical induction.

The remaining of this paper is organized as follows.
In section \ref{sec:2},  the formulations of RKDG methods are presented for the VM system. We also introduce notations and review some standard approximation results and inverse inequalities in finite element methods.
In section \ref{sec:3}, error estimates are established for the RKDG methods. Here we start with the error equations and energy equations. Based on these equations, the errors from the Vlasov and Maxwell parts are estimated and then combined.
To better present the analysis, the proofs of some lemmas are given  in section \ref{sec:4}.
Finally, we summarize and generalize our work in section \ref{sec:5}.

\section{Formulation of Runge-Kutta Discontinuous Galerkin Methods}
\label{sec:2}

In this section, we will introduce notations, review some standard approximation results and inverse inequalities,  and present the formulation of Runge-Kutta discontinuous Galerkin methods for the Vlasov-Maxwell system \eqref{eq1:vm1}.

  \subsection{Some preliminaries}

  Throughout this paper, standard notations are used for Sobolev spaces and norms: for a bounded domain $D$ and any nonnegative integer $m$, we denote the $L^2$-Sobolev space of order $m$ by $H^m(D)$
  equipped with Sobolev norm $||\cdot||_{m,D}$, and the $L^{\infty}$-Sobolev space of order $m$ by $W^{m,\infty}(D)$ with the Sobolev norm $||\cdot||_{m,\infty,D}$.
  When $m=0$, $L^2(D)$ is used  instead of $H^{0}(D)$, so is $L^{\infty}(D)$ instead of $W^{0,\infty}(D)$.
  For the brevity of notation, we use $\star=x$ or $v$ in this subsection.  For the computational domain $\Omega=\Omega_x\times\Omega_v$,
  assume $T_h^\star=\{K_\star\}$ is  a partition of $\Omega_\star$, with $K_\star$ being a
(rotated) Cartesian element or a  simplex,
then $T_h=\{K: K=K_x \times K_v, \forall K_x \in T_h^x, \forall K_v \in T_h^v\}$ defines a partition of $\Omega$.
  Let $\mathcal{E}_\star$ be the set of the edges of $T_h^\star$,
  then the edges of $T_h$ will be $\mathcal{E} = \{T_h^x \times \mathcal{E}_v \} \cup \{T_h^v \times \mathcal{E}_x \}$.
  In addition, let $\mathcal{E}_v = \mathcal{E}_v^i \cup \mathcal{E}_v^b$ with $\mathcal{E}_v^i$ (resp. $\mathcal{E}_v^b$) consisting of all interior (resp. boundary) edges of $T_h^v$.
  The mesh size of $T_h$ is denoted as $h=\max(h_x,h_v)=\max_{K\in T_h}h_{K}$, where $h_\star=\max_{K_\star \in T_h^\star} h_{K_\star}$ with $h_{K_\star} = $ diam($K_\star$),
  and  $h_K=\max(h_{K_x}, h_{K_v})$ with $K={K_x}\times{K_v}$.
  When the mesh is refined, we assume both $\frac{h_x}{h_{v,\min}}:=\frac{h_x}{\min_{K_v\in T_h^v}h_{K_v}}$
  and $\frac{h_v}{h_{x,\min}}:=\frac{h_v}{\min_{K_x\in T_h^x}h_{K_x}}$ are uniformly bounded above by a positive constant $\sigma_0$.
Therefore in our analysis we  do not always distinguish $h$, $h_{x}$, $h_{v}$, $h_{K_x}$ and $h_{K_v}$.
  It is further assumed that $\{T_h^\star\}_h$ is shape-regular. That is, if $\rho_{K_\star}$ denotes the diameter of the largest sphere included in $K_\star$, there is
$\frac{h_{K_\star}}{\rho_{K_\star}}\leq \sigma_\star,\; \forall K_\star\in\mT_h^\star$,
for a positive constant $\sigma_\star$ independent of $h_\star$.

  Next we introduce two finite dimensional discrete spaces
\begin{eqnarray*}
 \mathcal{G}_h^{k} &=& \{g \in L^2(\Omega): g|_{K} \in P^k(K),\forall K \in T_h \}, \\
 \mathcal{U}_h^{k} &=& \{U \in [L^2(\Omega_x)]^{3}: U|_{K_x} \in [P^k(K_x)]^{3}, \forall K_x \in T_h^x\},
\end{eqnarray*}
where $P^k(D)$ is the set of polynomials of the {\em total} degree at most $k$ on $D$, with $k$ being any nonnegative integer. Note that functions in $\mG_h^k$ (resp. $\mU_h^k$) are piecewise defined with respect to $T_h$ (resp. $T_h^x$). For such function, we would need the notations of jumps and averages.
Given an edge $e=(K_{x}^{+} \cap K_{x}^{-})\in\mE_x$ with $n_x^{\pm}$ as the outward unit normal vector of $K_x^\pm$, for any
$g \in \mathcal{G}_h^k$ and $U\in \mathcal{U}_h^k$ with
$g^{\pm} = g|_{K_{x}^{\pm}}$ and $U^{\pm} = U|_{K_{x}^{\pm}}$, the averages of $f$ and $U$ across $e$ are
\begin{equation*}
 \{g\}_x = \frac{1}{2}(g^{+} + g^{-}), \quad \{U\}_x = \frac{1}{2}(U^{+} + U^{-}),
\end{equation*}
and the jumps are
\begin{equation*}
 [g]_x=g^{+} n_{x}^{+} + g^{-} n_{x}^{-}, \quad [U]_x=U^{+} \cdot n_{x}^{+} + U^{-} \cdot n_{x}^{-}, \quad [U]_{tan}=U^{+}\times n_x^{+}+U^{-} \times n_x^{-}.
\end{equation*}
The averages and jumps across any interior edge $e=({K_{v}^{+} \cap K_{v}^{-} })\in\mE_v^i$ can be defined similarly.
For a boundary edge in $\mE_v^b$ with $n_v$ being the outward unit normal vector, we set  $[g]_v = g n_v$ and $\{g\}_v = \frac{1}{2}g$.
This is consistent with the exact solution $f$ being compactly supported in $\Omega_v$.
Below are some equalities which will be frequently used in the analysis and can be easily verified,
 \begin{subequations}
 \begin{align}
  \label{eq:jump1}
  &\frac{1}{2}[g^2]_{\star} = \{g\}_{\star} [g]_{\star},   \\
  \label{eq:jump3}
  &[g_1 g_2]_x - \{g_1\}_x [g_2]_x - \{g_2\}_x [g_1]_x = 0,  \\
  \label{eq:jump2}
  &[U \times V]_x + \{V\}_x \cdot [U]_{tan} - \{U\}_x \cdot [V]_{tan} = 0,
\end{align}
\end{subequations}
where
$g, g_1, g_2 \in \mathcal{G}_h^k$, and $U, V\in \mU_h^k$.
We also introduce some shorthand notations,
$\int_{\Omega} = \int_{T_h} = \sum_{K \in T_h} \int_K$, $\int_{\Omega_{\star}} = \int_{T_h^{\star}} = \sum_{K_{\star} \in T_h^{\star}} \int_{K_{\star}}$, $\int_{\mathcal{E}_{\star}} = \sum_{e \in \mathcal{E}_{\star}} \int_{e}$.
Additionally, $||g||_{0,\mathcal{E}} = (||g||_{0,\mathcal{E}_x \times T_h^v}^2 + ||g||_{0,T_h^x \times \mathcal{E}_v}^2)^{\frac{1}{2}}$ with
$||g||_{0,\mathcal{E}_x \times T_h^v} = \left( \int_{\mathcal{E}_x} \int_{T_h^v} g^2 dv ds_x \right)^{\frac{1}{2}}$,
$||g||_{0,T_h^x \times \mathcal{E}_v} = \left( \int_{T_h^x} \int_{\mathcal{E}_v} g^2 ds_v dx \right)^{\frac{1}{2}}$,
and $||g||_{0,\mathcal{E}_x} = (\int_{\mathcal{E}_x} g^2 ds_x)^{\frac{1}{2}}$.

Let $\Pi^k$ denote the $L^2$ projection onto $\mG_h^k$, and $\Pi^k_x$ be the $L^2$ projection onto $\mU_h^k$. In this paper, the following approximation properties in \eqref{eq:approx} and inverse inequalities in \eqref{eq:inv} will  be used:
there exists a constant $C>0$, such that $\forall g\in H^{k+1}(\Omega)$, $\forall U\in [H^{k+1}(\Omega_x)]^3$,
\begin{equation}
\label{eq:approx}
\left\{
\begin{aligned}
 ||g-\Pi^k g||_{0,K} + h_K^{\frac{1}{2}}||g-\Pi^k g||_{0,\partial K} \leq Ch_K^{k+1}||g||_{k+1,K}, \quad \forall K\in T_h, \\
 ||U-\Pi^k_x U||_{0,K_x} + h_{K_x}^{\frac{1}{2}}||U-\Pi^k_x U||_{0,\partial K_x} \leq Ch_{K_x}^{k+1}||U||_{k+1,K_x}, \quad \forall K_x \in T_h^x, \\
 ||U-\Pi^k_x U||_{0,\infty,K_x} \leq Ch_{K_x}^{k+1}||U||_{k+1,\infty,K_x}, \forall K_x \in T_h^x.\\
 \end{aligned}
 \right.
\end{equation}
In addition, there exists a constant $C>0$, such that $\forall g\in P^k(K)$, $\forall U\in [P^k(K_x)]^3$,
\begin{equation}
\label{eq:inv}
\left\{
\begin{aligned}
 ||\nabla_x g||_{0,K} \leq Ch_{K_x}^{-1}||g||_{0,K}, \quad ||\nabla_v g||_{0,K}\leq Ch_{K_v}^{-1}||g||_{0,K}, \\
 ||U||_{0,\infty,K_x} \leq Ch_{K_x}^{-\frac{d_x}{2}}||U||_{0,K_x}, \quad  ||U||_{0,\partial K_x} \leq Ch_{K_x}^{-\frac{1}{2}}||U||_{0,K_x}.
 \end{aligned}
 \right.
\end{equation}
Each positive constant $C$ in \eqref{eq:approx} and \eqref{eq:inv} is independent of the mesh sizes $h_\Kx$ and $h_{K_v}$, and it depends on $k$ and the shape regularity parameters $\sigma_x$ and (or) $\sigma_v$ of the mesh.  One can refer to \cite{Ciarlet} for more details of such standard results.

Throughout the paper,  $\tau$  is used to denote the time step and $t^n=n\tau$. Without loss of generality, we assume the time steps are uniform and $\tau$, $h \leq 1$.
The analysis in this paper also holds for non-uniform time steps.  Even though the numerical methods and error analysis will be presented when both the (exact and approximated) electric and magnetic fields have three components, they can be easily adapted to reduced VM equations, such as the one to study Weibel instability in \cite{CGLM} when $d_x=1$ and $d_v=2$,  where some components of $E$ and $B$ vanish and need not be approximated numerically.

\subsection{Runge-Kutta discontinuous Galerkin methods}
\label{sec:2.2}

Now we are ready to present the RKDG methods for the VM system, where upwind DG methods of arbitrary accuracy are used
as the spatial and phase discretization and a third order TVD Runge-Kutta method \cite{ShuOsher:1988} is used as
the time discretization.  Note that on the PDE level, the two equations in \eqref{eq1:vm1} involving the divergence
of the magnetic and electric fields can be derived from the remaining equations of the VM system as long as they
are satisfied by the initial data, these equations will not be discretized numerically just as in \cite{CGLM}.
For sufficiently smooth solutions as considered in this work, the divergence equations can be approximated accurately by the proposed methods (e.g.
with the accuracy which is one order lower than that of the electric and magnetic fields). One should be aware that for general cases,
imposing divergence equations in numerical simulations can be important.
To initialize the simulation, let  $f_h^0=\Pi^k f_0$, $E_h^0 = \Pi^k_{x} E_0$ and $B_h^0 =\Pi^k_{x} B_0$, where $f_0$, $E_0$ and $B_0$ are the initial data of the VM system.
 Then for $n \geq 0$, the approximate solutions at time $t^{n+1} = (n+1)\tau$ are defined as follows. We look for $f_h^{n,1}, f_h^{n,2}, f_h^{n+1} \in \mathcal{G}_h^k$, and $E_h^{n,1}, E_h^{n,2}, E_h^{n+1}, B_h^{n,1}, B_h^{n,2}, B_h^{n+1}\in \mathcal{U}_h^k$ satisfying
 \begin{subequations}
 \label{eq:1:2:3}
\begin{align}
   &(f_h^{n,1}, g)_\OO = (f_h^n, g)_\OO + \tau a_h(f_h^n, E_h^n, B_h^n;g),\label{eq:1} \\
   &(E_h^{n,1}, U)_\Ox + (B_h^{n,1}, V)_\Ox= (E_h^n, U)_\Ox + (B_h^n, V)_\Ox+\tau b_h(E_h^n, B_h^n, f_h^n; U, V),\notag\\
      &(f_h^{n,2}, g)_\OO = (\frac{3}{4}f_h^n+\frac{1}{4}f_h^{n,1}, g)_\OO + \frac{\tau}{4} a_h(f_h^{n,1}, E_h^{n,1}, B_h^{n,1};g), \label{eq:2}\\
   &(E_h^{n,2}, U)_\Ox + (B_h^{n,2}, V)_\Ox= (\frac{3}{4}E_h^n+\frac{1}{4}E_h^{n,1}, U)_\Ox + (\frac{3}{4}B_h^n+\frac{1}{4}B_h^{n,1}, V)_\Ox+\frac{\tau}{4} b_h(E_h^{n,1}, B_h^{n,1}, f_h^{n,1}; U, V),\notag\\
 &(f_h^{n+1}, g)_\OO = (\frac{1}{3}f_h^n+\frac{2}{3}f_h^{n,2}, g)_\OO + \frac{2\tau}{3} a_h(f_h^{n,2}, E_h^{n,2}, B_h^{n,2};g), \label{eq:3}\\
   &(E_h^{n+1}, U)_\Ox + (B_h^{n+1}, V)_\Ox= (\frac{1}{3}E_h^n+\frac{2}{3}E_h^{n,2}, U)_\Ox + (\frac{1}{3}B_h^n+\frac{2}{3}B_h^{n,2}, V)_\Ox+\frac{2\tau}{3} b_h(E_h^{n,2}, B_h^{n,2}, f_h^{n,2}; U, V),\notag
\end{align}
\end{subequations}
for any $g \in \mathcal{G}_h^k$ and $U,V \in \mathcal{U}_h^k$, where

\begin{align*}
 a_h(f_h,E_h,B_h;g)=&\int_\OO  f_h v\cdot \nabla_x g +  f_h(E_h+v\times B_h)\cdot \nabla_v g dxdv \\
 &- \sum_{K=K_x\times K_v\in T_h}\left(\int_{K_v}\int_{\partial K_x}\widehat{f_h v\cdot n_x}g ds_x dv + \int_{K_x}\int_{\partial K_v}\widehat{f_h(E_h+v\times B_h)\cdot n_v}g ds_v dx\right), \\
 b_h(E_h, B_h, f_h; U, V) =& \int_\Ox  B_h \cdot \nabla\times U - E_h \cdot \nabla\times V  dx
 + \sum_{K_x\in T_h^x}\int_{\partial K_x}\left(\widehat{n_x \times B_h}\cdot U - \widehat{n_x \times E_h}\cdot V \right) ds_x\\
 &-\int_\Ox J_h \cdot U dx, \quad J_h(x,t)=\int_\Ov f_h(x,v,t)v dv.
\end{align*}
Here $n_x$ and $n_v$ are outward unit normal vectors of $\partial K_x$ and $\partial K_v$, respectively. All the hat functions are upwinding numerical fluxes defined as
\begin{eqnarray*}
 \widehat{f_hv\cdot n_x}&=&
 \left(\{f_hv\}_x + \frac{|v\cdot n_x|}{2}[f_h]_x \right)\cdot n_x, \\
 \widehat{f_h(E_h+v\times B_h)\cdot n_v}
&=& \left(\{f_h(E_h + v\times B_h)\}_v + \frac{|(E_h+v\times B_h)\cdot n_v|}{2}[f_h]_v \right)\cdot n_v, \\
 \widehat{n_x \times E_h}&=&
 n_x \times \left(\{E_h\}_x + \frac{1}{2}[B_h]_{tan} \right), \quad
 \widehat{n_x \times B_h}=
 n_x \times \left(\{B_h\}_x - \frac{1}{2}[E_h]_{tan} \right),
\end{eqnarray*}
and they further specify $a_h(f_h,E_h,B_h;g)=a_{h,1}(f_h;g)+
a_{h,2}(f_h,E_h,B_h;g)$ with
\begin{align*}
 a_{h,1}(f_h;g) =& \int_{\Omega} f_h v\cdot \nabla_x g dxdv - \int_{T_h^v}\int_{\mathcal{E}_x}\left(\{f_h v\}_x + \frac{|v\cdot n_x|}{2}[f_h]_x \right)\cdot [g]_x ds_x dv, \\
 a_{h,2}(f_h,E_h,B_h;g) =& \int_{\Omega}f_h(E_h+ v\times B_h)\cdot \nabla_v g dxdv \\
 &- \int_{T_h^x}\int_{\mathcal{E}_v}\left(\{f_h(E_h + v\times B_h) \}_v + \frac{|(E_h + v\times B_h)\cdot n_v|}{2} [f_h]_v \right)\cdot[g]_v ds_v dx,
 \end{align*}
 and
 \begin{align*}
  b_h(E_h, B_h, f_h; U, V)
  =& \int_{\Omega_x} \left( B_h \cdot \nabla \times U - E_h \cdot \nabla \times V \right) dx-\int_\Ox J_h U dx \\
  &+ \int_{\mathcal{E}_x} \left( \{B_h\}_x - \frac{1}{2}[E_h]_{tan} \right) \cdot [U]_{tan}-\left( \{E_h\}_x + \frac{1}{2}[B_h]_{tan} \right) \cdot [V]_{tan} ds_x.
  \end{align*}

 Note that both $a_{h,1} $ and $b_h$ are linear with respect to each argument, yet $a_{h,2}(f_h, E_h, B_h;g)$ is linear with respect to $f_h$ and $g$ only.
 The overall RKDG methods are consistent, and this will be used to derive the error equations in next section.

\section{Error Estimates}
\label{sec:3}

This section is devoted to the main result of the paper, which is given in Theorem \ref{theorem:1}. More specifically, we will establish error estimates
at any given time $T>0$ for the fully discrete RKDG methods in section \ref{sec:2.2} when they are used to solve sufficiently smooth solutions.

Unless otherwise specified, $C$ is used to denote a generic positive constant, and it can take different values at different occurrences.
This constant is independent of $n, h, \tau$, and may depend on  polynomial degree $k$, mesh parameter $\sigma_0, \sigma_x, \sigma_v$, domain parameters $L_{x,i}$, $L_{v,i}$, $i=1,2$, and the time $T$.
It may also depend on  the exact solution in the form of its certain Sobolev norms or semi-norms.
The constant $\gamma_1$ in Theorem \ref{thm:vlas}, $\gamma_2$ in Theorem \ref{thm:max}, and $\gamma$ in Theorem \ref{theorem:1} have similar dependence as the generic constant $C$.
For convenience, we do not distinguish the upper indices $n,0$ and $n$. For instance, we regard $g^{n,0} = g^{n}$ for any function $g$.  We use $\lceil a \rceil$ to denote the smallest integer greater than or equal to $a$.
With the analysis being very technical, to make it easier to follow, the proofs of some lemmas are given later in section \ref{sec:4}.

\begin{theorem}
  \label{theorem:1}
 Let $(f, E, B)$ be a sufficiently smooth exact solution to the VM system \eqref{eq1:vm1}. Let $(f_h^n, E_h^n, B_h^n)\in \mG_h^k\times \mU_h^k\times \mU_h^k$ be the solution to the scheme \eqref{eq:1:2:3} at time $t^n$
  with   $k \geq \left \lceil \frac{d_x+1}{2} \right \rceil$.
 Under a CFL condition $\tau \leq \gamma h$, there is
\begin{equation*}
||f(\cdot, \cdot, t^n)-f_h^n||_{0,\Omega}^2 + ||E(\cdot, t^n)-E_h^n||_{0,\Omega_x}^2 + ||B(\cdot, t^n)-B_h^n||_{0,\Omega_x}^2 \leq Ch^{2k+1} + C\tau^6
\end{equation*}
 for any $n\leq T/\tau $. In addition, $\forall m+1 \leq T/\tau$,
 there is
 \begin{equation*}
||E(\cdot, t^m) - E_h^m||_{0,\infty,\Omega_x} \leq Ch, \quad ||B(\cdot, t^m) - B_h^m||_{0,\infty,\Omega_x} \leq Ch.
 \end{equation*}

\end{theorem}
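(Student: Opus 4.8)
The plan is to argue by induction on the time level $n$, establishing the $L^2$ estimate and the $L^\infty$ field estimate simultaneously. Following the method-of-lines framework of \cite{ZS2010}, I would first split each error into an $L^2$-projection part and a finite-element part, writing $f - f_h^n = (f - \Pi^k f) + (\Pi^k f - f_h^n) =: \eta_f^n + \xi_f^n$ and, analogously with $\Pi^k_x$, $E - E_h^n = \eta_E^n + \xi_E^n$ and $B - B_h^n = \eta_B^n + \xi_B^n$. The projection parts $\eta$ are controlled at optimal order $O(h^{k+1})$ by \eqref{eq:approx}, so the task reduces to bounding the discrete quantities $\xi_f^n \in \mG_h^k$ and $\xi_E^n, \xi_B^n \in \mU_h^k$. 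Using consistency of the scheme with the exact solution and Taylor-expanding $(f,E,B)$ in time about $t^n$, I would derive error equations at each of the three TVD Runge-Kutta stages; the third-order accuracy of the time discretization contributes a local truncation error whose accumulation over $O(T/\tau)$ steps yields the global $C\tau^6$ contribution to the squared $L^2$ error.

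The two building blocks are the Vlasov estimate (Theorem~\ref{thm:vlas}) and the Maxwell estimate (Theorem~\ref{thm:max}). For each, I would test the stage error equations against the corresponding finite-element error ($\xi_f$ for the Vlasov equation, $\xi_E$ and $\xi_B$ for Maxwell) and exploit the upwind fluxes. Applying the identities \eqref{eq:jump1}--\eqref{eq:jump2} to the edge integrals reorganizes them into non-negative jump-penalty terms --- controlling the $|v\cdot n_x|$-weighted jumps of $\xi_f$ across $\mE_x$, the $(E_h+v\times B_h)\cdot n_v$-weighted jumps of $\xi_f$ across $\mE_v$, and the Maxwell jumps $[\xi_E]_{tan}, [\xi_B]_{tan}$ across $\mE_x$ --- which are retained with a favorable sign and used to absorb the indefinite edge contributions coming from $\eta$ and from transferring time-derivatives through the Runge-Kutta stages. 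The remaining volume terms are handled with the inverse inequalities \eqref{eq:inv} and Young's inequality; this is precisely where the CFL restriction $\tau \le \gamma h$ is needed, to keep the resulting factors $\tau h^{-1}$ bounded.

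The \emph{main obstacle} is the nonlinear coupling of the Vlasov and Maxwell parts through the form $a_{h,2}(f_h, E_h, B_h; g)$, which is nonlinear in $(f_h, E_h, B_h)$. Substituting $f_h = \Pi^k f - \eta_f - \xi_f$ (and analogously for the fields) and subtracting the exact-solution identity, the errors enter $a_{h,2}$ both through the transported density and through the force field $E_h + v\times B_h$, and the indefinite surviving contributions are quadratic in the errors. I would separate these into two types. In the first, the divergence-free structure of the force field $\xi_E + v\times\xi_B$ in the $v$-variable (since $\xi_E$ is independent of $v$ and $\nabla_v\cdot(v\times\xi_B)=0$) permits a discrete integration by parts that transfers the $v$-derivative onto the smooth factor $\Pi^k f$, reducing the term to an $L^2$--$L^2$ product bounded by $C(\|\xi_f\|_0^2 + \|\xi_E\|_0^2 + \|\xi_B\|_0^2)$ with no loss of powers of $h$. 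In the second type, of the schematic form $\int_\Omega \xi_f\,(\xi_E + v\times\xi_B)\cdot\nabla_v\xi_f$, the inverse inequality \eqref{eq:inv} produces a factor $h^{-1}$; here I would invoke the a priori assumption that $\|\xi_E^m\|_{0,\infty}, \|\xi_B^m\|_{0,\infty} \le Ch$ for $m\le n$ (to be verified by the induction), so that the $O(h)$ size of the field error in $L^\infty$ exactly absorbs this $h^{-1}$, leaving a term bounded by $C\|\xi_f\|_0^2$. Feeding all such bounds into a coupled discrete Gronwall inequality that combines Theorems~\ref{thm:vlas} and \ref{thm:max} (coupled through the nonlinear force term above and through the linear current $J_h=\int_{\Omega_v} f_h v\,dv$, which is harmless) yields the single $L^2$ estimate $\|f - f_h^{n+1}\|_0^2 + \|E - E_h^{n+1}\|_0^2 + \|B - B_h^{n+1}\|_0^2 \le Ch^{2k+1} + C\tau^6$.

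Finally, I would close the induction and recover the $L^\infty$ field estimate. From the $L^2$ bound just obtained, $\|\xi_E^{n+1}\|_0, \|\xi_B^{n+1}\|_0 \le C(h^{k+1/2} + \tau^3)$, and the inverse inequality \eqref{eq:inv} gives $\|\xi_E^{n+1}\|_{0,\infty} \le Ch^{-d_x/2}\|\xi_E^{n+1}\|_0 \le C(h^{\,k+1/2-d_x/2} + h^{-d_x/2}\tau^3)$, and likewise for $\xi_B^{n+1}$. Under $\tau \le \gamma h$ and the hypothesis $k \ge \lceil\frac{d_x+1}{2}\rceil$ --- equivalently $k + \frac12 - \frac{d_x}{2} \ge 1$, while $3 - \frac{d_x}{2} \ge 1$ holds automatically for $d_x \le 3$ --- both terms are $O(h)$; adding the $O(h^{k+1})$ projection error gives $\|E - E_h^{n+1}\|_{0,\infty,\Omega_x}, \|B - B_h^{n+1}\|_{0,\infty,\Omega_x} \le Ch$. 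For $h$ sufficiently small this recovers (and is subsumed by) the a priori assumption, so the induction closes and the second assertion of the theorem follows. I expect the quadratic coupling terms of the third paragraph to be the crux of the argument, and the constraint $k \ge \lceil\frac{d_x+1}{2}\rceil$ is dictated exactly by this inverse-inequality step.
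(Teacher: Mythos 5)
Your proposal is correct and follows essentially the same route as the paper's own proof: the same $\xi$/$\eta$ projection splitting, the same building blocks (Theorems~\ref{thm:vlas} and~\ref{thm:max} combined with a stage estimate and fed into a discrete Gronwall argument), the same a priori $L^\infty$ assumption on the field errors closed by induction in time, and the same inverse-inequality recovery step, where you correctly locate the \emph{only} place the hypothesis $k \geq \left\lceil \frac{d_x+1}{2} \right\rceil$ is needed (exactly as noted in the paper's remark after the proof). The only cosmetic differences are that the paper's $L^\infty$-Assumption and its inductive recovery also cover the intermediate Runge--Kutta stage values $\sharp=1,2$ (via Lemma~\ref{lemma:5.1}), and that the paper handles the same-stage quadratic coupling exactly through the upwind identity $a_h(\xi_f^{n,\sharp},E_h^{n,\sharp},B_h^{n,\sharp};\xi_f^{n,\sharp})=-\frac{1}{2}\stab_f^{n,\sharp}$, reserving the $L^\infty$ assumption for uniform boundedness of $E_h,B_h$ and for stage-difference terms, rather than using it to absorb the $h^{-1}$ in the quadratic term as you propose; neither difference affects the substance of the argument.
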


\subsection{Error equations}
  Let $(f(x,v,t^n),E(x,t^n),B(x,t^n))$ be the exact solution to the system $\eqref{eq1:vm1}$ and $\eqref{eq2:vm2}$ at time $t^n = n\tau$.
 We denote $f^{n}(x,v)=f(x,v,t^n)$, $E^{n}(x)=E(x,t^n)$, $B^{n}(x)=B(x,t^n)$, $J^{n}(x) = \int_{\Omega_v}f^{n}(x,v)v dv$, and define

\begin{equation}
 \label{eq:exact}
\left\{
\begin{aligned}
 &f^{n,1} = f^{n} - \tau \left(v\cdot \nabla_x f^{n}+(E^{n}+v\times B^{n})\cdot \nabla_v f^{n} \right), \\
 &E^{n,1} = E^{n} + \tau (\nabla \times B^{n} - J^{n}), \quad B^{n,1} = B^{n} - \tau (\nabla \times E^{n}),\\
 &f^{n,2} = \frac{3}{4}f^{n}+\frac{1}{4}f^{n,1}-\frac{\tau}{4} \left(v\cdot \nabla_x f^{n,1} + (E^{n,1} + v\times B^{n,1} )\cdot \nabla_v f^{n,1}\right),\\
 &J^{n,1} = \int_{\Omega_v}f^{n,1} v dv, \quad J^{n,2} = \int_{\Omega_v}f^{n,2} v dv, \\
 &E^{n,2} = \frac{3}{4}E^{n}+\frac{1}{4}E^{n,1}+\frac{\tau}{4}(\nabla \times B^{n,1} - J^{n,1}),
 \quad B^{n,2} = \frac{3}{4} B^{n} + \frac{1}{4} B^{n,1} -\frac{\tau}{4}(\nabla \times E^{n,1}).
\end{aligned}
\right.
\end{equation}
Equations in $\eqref{eq:exact}$ are obtained by applying one step of the third order Runge-Kutta time discretization in \cite{ShuOsher:1988} to the VM system (without the divergence conditions) from $t=t^n$ with the exact solution as the initial data at $t_n$.
From $\eqref{eq1:vm1}$ and $\eqref{eq:exact}$,  one can further represent $f^{n,\sharp}$, $E^{n,\sharp}$ and $B^{n,\sharp}$ $(\sharp = 1,2)$ in terms of $f^{n}$, $E^{n}$, $B^{n}$ and their derivatives as below,

\begin{equation}
 \label{eq:exact_new}
\left\{
\begin{aligned}
 &f^{n,1} = f^{n} + \tau \partial_t f^{n},   \quad 
  E^{n,1} = E^{n} + \tau \partial_t E^{n},  \quad  
  B^{n,1} = B^{n} + \tau \partial_t B^{n}, \\
 &f^{n,2} = f^{n} + \frac{\tau}{2} \partial_t f^{n} + \frac{\tau^2}{4} \partial_t^2 f^{n} - \frac{\tau^3}{4} (\partial_t E^{n} + v\times \partial_t B^{n}) \cdot \nabla_v (\partial_t f^{n}),\\
 &E^{n,2} = E^{n} + \frac{\tau}{2} \partial_t E^{n} + \frac{\tau^2}{4} \partial_t^2 E^{n},      \quad  
  B^{n,2} = B^{n} + \frac{\tau}{2} \partial_t B^{n} + \frac{\tau^2}{4} \partial_t^2 B^{n}.
\end{aligned}
\right.
\end{equation}

In the next lemma,
local truncation errors from each step of the Runge-Kutta time discretization are given. The results can be verified straightforwardly based on \eqref{eq:exact_new} and Taylor expansion, and the proof is omitted.
\begin{lemma}
\label{lemma:1}
If we define
 \begin{eqnarray}
  f(x,v,t^{n+1})&=&\frac{1}{3}f^{n}+\frac{2}{3}f^{n,2}-\frac{2\tau}{3}\left( v\cdot \nabla_x f^{n,2}+ (E^{n,2}+v\times B^{n,2})\cdot \nabla_v f^{n,2} \right)
   + T_f^n(x,v), \nonumber\\
  E(x,t^{n+1})&=&\frac{1}{3}E^{n}+\frac{2}{3}E^{n,2}+\frac{2\tau}{3} (\nabla\times B^{n,2} - J^{n,2}) + T_E^n(x),   \label{eq:err0}\\
  B(x,t^{n+1})&=& \frac{1}{3}B^{n} + \frac{2}{3}B^{n,2}- \frac{2\tau}{3}(\nabla \times E^{n,2}) + T_B^n(x), \nonumber
 \end{eqnarray}
where $T_f^n(x, v), T_B^n(x), T_E^n(x)$ are the local truncation errors in the $n$-th time step $\forall n: (n+1)\tau\leq T$, then
  \begin{equation*}
    ||T_f^n||_{0,\Omega}, ||T_E^n||_{0,\Omega_x}, ||T_B^n||_{0,\Omega_x} \leq C\tau^4.
  \end{equation*}
  \end{lemma}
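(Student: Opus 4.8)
The plan is to verify the estimate by a direct Taylor expansion argument, using the observation that the update in \eqref{eq:err0} is exactly the third order TVD Runge-Kutta scheme of \cite{ShuOsher:1988} applied, at the continuous level, to the method-of-lines form of the VM system. Writing this system as
\begin{equation*}
\partial_t f = \mathcal{L}_f(f,E,B), \qquad \partial_t E = \mathcal{L}_E(B,f), \qquad \partial_t B = \mathcal{L}_B(E),
\end{equation*}
with $\mathcal{L}_f(f,E,B)=-v\cdot\nabla_x f-(E+v\times B)\cdot\nabla_v f$, $\mathcal{L}_E(B,f)=\nabla\times B-J$ and $\mathcal{L}_B(E)=-\nabla\times E$, the Maxwell part $(\mathcal{L}_E,\mathcal{L}_B)$ is linear while $\mathcal{L}_f$ is linear in $f$ for fixed $(E,B)$ with a coupling term $-(E+v\times B)\cdot\nabla_v f$ that is bilinear in $f$ and $(E,B)$. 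Since this Runge-Kutta method has classical order three, the residual left after one step is formally $O(\tau^4)$; the work is to confirm the cancellation through order $\tau^3$ for the nonlinearly coupled system and then to bound the remainder in $L^2$.

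First I would Taylor expand each field about $t^n$, e.g.
\begin{equation*}
f(\cdot,\cdot,t^{n+1})=f^n+\tau\partial_t f^n+\frac{\tau^2}{2}\partial_t^2 f^n+\frac{\tau^3}{6}\partial_t^3 f^n+R_f^n,
\end{equation*}
with $R_f^n$ the Lagrange remainder involving $\partial_t^4 f$, and similarly for $E,B$. Next I would substitute the closed-form stage representations \eqref{eq:exact_new} into the right-hand sides of \eqref{eq:err0}. For the Vlasov equation this means rewriting the last term as $\frac{2\tau}{3}\mathcal{L}_f(f^{n,2},E^{n,2},B^{n,2})$ and inserting the polynomial-in-$\tau$ expressions \eqref{eq:exact_new} for $f^{n,2},E^{n,2},B^{n,2}$, then expanding the (quadratic) operator $\mathcal{L}_f$ in powers of $\tau$; for the Maxwell equations the corresponding step is linear and hence simpler.

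The verification step is then to collect the result by powers of $\tau$ and check that, after using $\partial_t^2 f^n=\frac{d}{dt}\mathcal{L}_f(f,E,B)|_{t^n}$ and $\partial_t^3 f^n=\frac{d^2}{dt^2}\mathcal{L}_f(f,E,B)|_{t^n}$ (and their analogues for $E,B$), the coefficients of $\tau^0,\tau^1,\tau^2,\tau^3$ reproduce exactly those of the Taylor expansion above, so that these orders cancel in $T_f^n,T_E^n,T_B^n$. I would pay special attention to the cubic correction $-\frac{\tau^3}{4}(\partial_t E^n+v\times\partial_t B^n)\cdot\nabla_v(\partial_t f^n)$ appearing in the representation of $f^{n,2}$ in \eqref{eq:exact_new}: this is precisely the contribution of the Vlasov-Maxwell coupling at third order, and its presence is what allows the $\tau^3$ coefficient to match (it is absent from $E^{n,2},B^{n,2}$ because the Maxwell part is linear). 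What survives is a finite linear combination of $\tau^4$ times spatial derivatives of $\partial_t^j f^n,\partial_t^j E^n,\partial_t^j B^n$ for $0\le j\le 3$, together with products of such quantities coming from the nonlinearity, plus the remainders $R_f^n,R_E^n,R_B^n$.

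I expect the main obstacle to be purely the bookkeeping of the nonlinear cross terms rather than any conceptual difficulty: each time derivative of $\mathcal{L}_f$ produces products such as $(\partial_t E+v\times\partial_t B)\cdot\nabla_v f$ and $(E+v\times B)\cdot\nabla_v\partial_t f$, so $\partial_t^3 f$ unfolds into many terms that must be matched one by one against the $\tau$-expansion of $\mathcal{L}_f(f^{n,2},E^{n,2},B^{n,2})$. Once the cancellation through order $\tau^3$ is confirmed, the $L^2$ bound is immediate: under the assumed regularity of $(f,E,B)$ on $[0,T]$, every spatial and temporal derivative appearing in $T_f^n$, $T_E^n$, $T_B^n$ lies in $L^2(\Omega)$ or $L^2(\Omega_x)$ with norm controlled by fixed Sobolev norms of the exact solution, and since every surviving term carries a factor $\tau^4$, the triangle inequality yields $\|T_f^n\|_{0,\Omega},\|T_E^n\|_{0,\Omega_x},\|T_B^n\|_{0,\Omega_x}\le C\tau^4$ with $C$ depending only on those Sobolev norms, on $T$, and on the domain.
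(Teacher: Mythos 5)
Your proposal is correct and takes essentially the same approach as the paper: the paper omits the details, stating only that the results "can be verified straightforwardly based on \eqref{eq:exact_new} and Taylor expansion," which is precisely your argument of substituting the stage representations \eqref{eq:exact_new} into \eqref{eq:err0}, confirming cancellation through order $\tau^3$ (including the cubic coupling correction in $f^{n,2}$), and bounding the surviving $O(\tau^4)$ terms in $L^2$ by Sobolev norms of the exact solution.
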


For each stage of the Runge-Kutta method, we denote the error by $e_f^{n,\sharp}=f^{n,\sharp}-f_h^{n,\sharp}=\xi_f^{n,\sharp}-\eta_f^{n,\sharp}$, where $\xi_f^{n,\sharp}=\Pi^{k}f^{n,\sharp}-f_h^{n,\sharp}$ and $\eta_f^{n,\sharp}=\Pi^{k}f^{n,\sharp}-f^{n,\sharp}$, for $\sharp = 0,1,2$. Given that $\eta_f^{n,\sharp}$ can be estimated in a standard way from the approximation results in \eqref{eq:approx} of the discrete spaces, our error analysis will focus on the term $\xi_f^{n,\sharp}$, which is also called the projected error due to $\xi_f^{n,\sharp}=\Pi^{k} e_f^{n,\sharp}$.  Similar comments and conventions on notation go to  $e_E^{n,\sharp}, e_B^{n,\sharp}, \xi_E^{n,\sharp}, \xi_B^{n,\sharp}, \eta_E^{n,\sharp}$ and $\eta_B^{n,\sharp}$.
Next we multiply an arbitrary test function $g\in \mathcal{G}_h^k$ (resp. $U, V \in \mathcal{U}_h^k$) on both sides of the equations corresponding to the Vlasov equation (resp. Maxwell equations) in \eqref{eq:exact} and \eqref{eq:err0}, integrate over each mesh element $K$ (resp. $\Kx$), take integration by parts, and sum up with respect to all $K\in T_h$  (or all $\Kx\in T_h^x$). We then subtract \eqref{eq:1} - \eqref{eq:3} from the resulting equations, and reach the error equations,
\begin{eqnarray}
  \label{eq:err1}
  \left\{
  \begin{aligned}
   &(\xi_f^{n,1},g)_{\Omega} = (\xi_f^n,g)_{\Omega} + \tau \mathcal{J}(g), \\
   &(\xi_f^{n,2},g)_{\Omega} = (\frac{3}{4}\xi_f^{n} + \frac{1}{4}\xi_f^{n,1},g)_{\Omega} + \frac{\tau}{4} \mathcal{K}(g), \\
   &(\xi_f^{n+1},g)_{\Omega} = (\frac{1}{3} \xi_f^{n} + \frac{2}{3}\xi_f^{n,2},g)_{\Omega} + \frac{2\tau}{3} \mathcal{L}(g),
  \end{aligned}
  \right.
\end{eqnarray}

\begin{eqnarray}
  \label{eq:err2}
  \left\{
  \begin{aligned}
   &(\xi_E^{n,1},U)_{\Omega_x}+ (\xi_B^{n,1},V)_{\Omega_x}= (\xi_E^n,U)_{\Omega_x}+(\xi_B^n,V)_{\Omega_x} + \tau \mathcal{Q}(U, V), \\
   &(\xi_E^{n,2},U)_{\Omega_x}+(\xi_B^{n,2},V)_{\Omega_x}= (\frac{3}{4}\xi_E^{n} + \frac{1}{4}\xi_E^{n,1},U)_{\Omega_x} +
   (\frac{3}{4} \xi_B^{n}+\frac{1}{4}\xi_B^{n,1},V)_{\Omega_x} +\frac{\tau}{4}  \mathcal{R}(U, V), \\
   &(\xi_E^{n+1},U)_{\Omega_x}+(\xi_B^{n+1},V)_{\Omega_x} = (\frac{1}{3} \xi_E^{n} + \frac{2}{3}\xi_E^{n,2},U)_{\Omega_x} + (\frac{1}{3} \xi_B^{n} + \frac{2}{3}\xi_B^{n,2},V)_{\Omega_x}+ \frac{2\tau}{3}\mathcal{S}(U, V).
  \end{aligned}
  \right.
\end{eqnarray}
  Here
  \begin{eqnarray*}
  \mathcal{J}(g)&=&\left(\frac{\eta_f^{n,1}-\eta_f^{n}}{\tau},g\right)_{\Omega} + a_h(f^n,E^n,B^n;g)-a_h(f_h^n,E_h^n,B_h^n;g),\\
  \mathcal{K}(g)&=&\left(\frac{4\eta_f^{n,2}-3\eta_f^n-\eta_f^{n,1}}{\tau},g\right)_{\Omega} + a_h(f^{n,1},E^{n,1},B^{n,1};g)-a_h(f_h^{n,1},E_h^{n,1},B_h^{n,1};g),\\
  \mathcal{L}(g)&=&\left(\frac{3\eta_f^{n+1}-\eta_f^n-2\eta_f^{n,2}+3T_f^n(x,v)}{2\tau}  ,g\right)_{\Omega}
      + a_h(f^{n,2},E^{n,2},B^{n,2};g)- a_h(f_h^{n,2},E_h^{n,2},B_h^{n,2};g),
  \end{eqnarray*}
      \begin{eqnarray*}
  \mathcal{Q}(U, V)&=&\left(\frac{\eta_E^{n,1}-\eta_E^{n}}{\tau},U\right)_{\Omega_x} + \left(\frac{\eta_B^{n,1}-\eta_B^{n}}{\tau},V\right)_{\Omega_x}
  +b_h(e_E^n, e_B^n, e_f^n; U, V),\\
  \mathcal{R}(U, V)&=&\left(\frac{4\eta_E^{n,2}-3\eta_E^n-\eta_E^{n,1}}{\tau},U\right)_{\Omega_x} +
  \left(\frac{4\eta_B^{n,2}-3\eta_B^n-\eta_B^{n,1}}{\tau},V\right)_{\Omega_x}
  + b_h(e_E^{n,1}, e_B^{n,1}, e_f^{n,1}; U, V),\\
  \mathcal{S}(U, V)&=&\left(\frac{3\eta_E^{n+1}-\eta_E^n-2\eta_E^{n,2}+3T_E^n(x)}{2\tau}  ,U\right)_{\Omega_x}
  + \left(\frac{3\eta_B^{n+1}-\eta_B^n-2\eta_B^{n,2}+3T_B^n(x)}{2\tau}  ,V\right)_{\Omega_x}\\
      &&+ b_h(e_E^{n,2}, e_B^{n,2}, e_f^{n,2}; U, V),
  \end{eqnarray*}
with any test functions $g\in \mathcal{G}_h^k$ and $U,V\in \mathcal{U}_h^k$. For the functional $\mathcal{J}(\cdot)$, we denote $\mathcal{J}_1(g)=\left(\frac{\eta_f^{n,1}-\eta_f^{n}}{\tau} ,g\right)_\Omega$ and $\mathcal{J}_2(g)=a_h(f^n,E^n,B^n;g)-a_h(f_h^n,E_h^n,B_h^n;g)$. Similarly, one can define $\mathcal{K}_{\sharp}$, $\mathcal{L}_{\sharp}$, $\mathcal{Q}_{\sharp}$, $\mathcal{R}_{\sharp}$, $\mathcal{S}_{\sharp}$, $\sharp=1,2$.

We now take the test function $g=\xi_f^n, 4\xi_f^{n,1}$ and $6\xi_f^{n,2}$ in each equation of \eqref{eq:err1},
respectively, sum them up and obtain the energy equations
\begin{eqnarray}
 \label{eq:errf}
 3||\xi_f^{n+1}||_{0,\Omega}^2 - 3||\xi_f^{n}||_{0,\Omega}^2
 &=& \tau[\mathcal{J}(\xi_f^n) + \mathcal{K}(\xi_f^{n,1}) + 4\mathcal{L}(\xi_f^{n,2})] \\
 &+& ||2\xi_f^{n,2} - \xi_f^{n,1} - \xi_f^{n}||_{0,\Omega}^2 + 3(\xi_f^{n+1}-\xi_f^n, \xi_f^{n+1}-2\xi_f^{n,2} + \xi_f^{n})_{\Omega}. \nonumber
\end{eqnarray}
Similarly, the following equation holds
\begin{eqnarray}
 \label{eq:errEB}
 &&3(||\xi_E^{n+1}||_{0,\Omega_x}^2 +  ||\xi_B^{n+1}||_{0,\Omega_x}^2) - 3(||\xi_E^{n}||_{0,\Omega_x}^2 + ||\xi_B^{n}||_{0,\Omega_x}^2)\\
 &=&
 \tau[\mathcal{Q}(\xi_E^n, \xi_B^n) + \mathcal{R}(\xi_E^{n,1}, \xi_B^{n,1}) + 4\mathcal{S}(\xi_E^{n,2}, \xi_B^{n,2})]+ ||2\xi_E^{n,2} - \xi_E^{n,1} - \xi_E^{n}||_{0,\Omega_x}^2 + ||2\xi_B^{n,2} - \xi_B^{n,1} - \xi_B^{n}||_{0,\Omega_x}^2\notag\\
 &&+3(\xi_E^{n+1}-\xi_E^n, \xi_E^{n+1}-2\xi_E^{n,2} + \xi_E^{n})_{\Omega_x} + 3(\xi_B^{n+1}-\xi_B^n, \xi_B^{n+1}-2\xi_B^{n,2} + \xi_B^{n})_{\Omega_x}. \nonumber
\end{eqnarray}
The main error estimate will be established based on equations \eqref{eq:errf} - \eqref{eq:errEB} which describe how the $L^2$ norms of the projected errors are accumulated in one time step. In particular, in the next two subsections, we will estimate the errors from the Vlasov and Maxwell solvers, respectively, and the results will be combined in section \ref{sec:3.4} to get the main result of this paper.

Before continue, we will make {\em a priori} assumption
for the $L^\infty$ error of the magnetic and electric fields,

\bigskip
\noindent{\bf $L^\infty$-Assumption}: For any integer $n+1 \leq T/\tau$,
there is $||e_E^{n,\sharp}||_{0,\infty,\Omega_x}$, $||e_B^{n,\sharp}||_{0,\infty,\Omega_x} \leq Ch$ with $\sharp = 0,1,2$.

\bigskip
\noindent
This assumption will be used in section \ref{sec:3.2} and Lemma \ref{lemma:2}-(2) to estimate terms with $a_{h,2}$ as this is where the nonlinear coupling of the Vlasov and Maxwell parts lies, and this assumption will eventually be established rigorously by mathematical induction in section \ref{sec:3.4}.

Our analysis will also benefit from the following shorthand notations,
\begin{subequations}
\begin{align}
\stab_f^\star&=\int_{T_h^v} \int_{\mathcal{E}_x} |v\cdot n_x| |[\xi_f^\star]_x|^2 ds_x dv
 + \int_{T_h^x} \int_{\mathcal{E}_v} |(E_h^\star + v \times B_h^\star)\cdot n_v| |[\xi_f^\star]_v|^2 ds_v dx,\\
 \stab_{EB}^\star&=\int_{\mathcal{E}_x}( |[\xi_E^{\star}]_{tan}|^2 + |[\xi_B^{\star}]_{tan}|^2 )ds_x,
 \quad \ene^\star =||\xi_f^\star||^2_{0,\Omega} + ||\xi_E^{\star}||_{0,\Omega_x}^2 + ||\xi_B^\star||_{0,\Omega_x}^2.
\end{align}
\end{subequations}
where $\star=n$ or $n,\sharp$.
 The terms $\stab$ with different subscripts or superscripts provide stability mechanism due to the upwind phase and spatial discretizations. Later on another type of stability mechanism will emerge which is due to the temporal discretization.

In our analysis, we will frequently encounter certain linear combinations of $\eta_\diamond^{n}$, $\eta_\diamond^{n,1}$, $\eta_\diamond^{n,2}$ and $\eta_\diamond^{n+1}$, $\diamond=f, E, B$. In the next lemma, the estimates for such terms are summarized, with their proofs given in section \ref{sec:4}.

\begin{lemma}
\label{lemma:2}
 Let $d^n_\diamond = d_0 \eta_\diamond^n + d_1 \eta_\diamond^{n,1} + d_2 \eta_\diamond^{n,2} + d_3 \eta_\diamond^{n+1}$, $\diamond=f, E, B$, where $d_0, d_1, d_2, d_3$ are four constants satisfying $d_0+d_1 + d_2+d_3=0$ and independent of $n,h,\tau$. Then for any $g \in \mG_h^k$ and $U, V \in \mU_h^k$, we have
\begin{subequations}
 \begin{align}
(1)&\;\;   ||d^n_f||_{0,\Omega} + h^{\frac{1}{2}} ||d^n_f||_{0,\mathcal{E}} \leq C \tau h^{k+1}, \quad ||d^n_\star||_{0,\Omega_x} + h_x^{\frac{1}{2}} ||d^n_\star||_{0,\mathcal{E}_x} \leq C\tau h_x^{k+1}, \;\star=E, B, \label{eq:2.2.1}\\
(2)&\;\; |a_h(d^n_f, E_h^{n,s}, B_h^{n,s}; g)|
\leq   C  \frac{\tau}{h} h^{k+1} ||g||_{0,\Omega}\leq  C  \frac{\tau}{h} (h^{2k+2} + ||g||_{0,\Omega}^2), \;s=0, 1, 2, \label{eq:2.2.1_1}\\
(3)&\;\; |b_h(d_E^n, d_B^n, d_f^n; U, V)| \leq
        C \tau h^{k} (||U||_{0,\Omega_x}^2 + ||V||_{0,\Omega_x}^2).\label{eq:2.2.1_4}
  \end{align}
\end{subequations}
\end{lemma}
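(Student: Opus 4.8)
The plan is to exploit the single structural hypothesis $d_0+d_1+d_2+d_3=0$, which forces each $d^n_\diamond$ to be the $L^2$-projection error of a quantity of size $O(\tau)$. Since $\Pi^k$ and $\Pi^k_x$ are linear, I would first write $d^n_\diamond=(\Pi^k-I)w_\diamond$ (with $\Pi^k_x$ for $\diamond=E,B$), where $w_\diamond=d_0\diamond^n+d_1\diamond^{n,1}+d_2\diamond^{n,2}+d_3\diamond^{n+1}$. Subtracting $(d_0+d_1+d_2+d_3)\diamond^n=0$ gives $w_\diamond=\sum_{i=1}^{3}d_i(\diamond^{n,i}-\diamond^n)$ with $\diamond^{n,3}:=\diamond^{n+1}$, and every difference $\diamond^{n,i}-\diamond^n$ is $O(\tau)$: this is explicit from the representations \eqref{eq:exact_new} for $\diamond^{n,1},\diamond^{n,2}$ and follows for $\diamond^{n+1}$ by a Taylor expansion about $t^n$. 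Hence $w_\diamond=\tau\,\widetilde w_\diamond$ with $||\widetilde w_\diamond||_{k+1}\le C$ on the relevant domain, the constant being controlled by Sobolev norms of the sufficiently regular exact solution. Part (1) is then immediate by applying the approximation estimate \eqref{eq:approx} to $d^n_\diamond=(\Pi^k-I)w_\diamond$ element by element and summing over $T_h$ (resp. $T_h^x$): both the interior and the trace terms then acquire the factor $\tau h^{k+1}$ (resp. $\tau h_x^{k+1}$), which is \eqref{eq:2.2.1}.

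For part (2) I would split $a_h=a_{h,1}+a_{h,2}$ and treat volume and edge contributions separately, noting that $|v|\le C$ since $\Omega_v$ is bounded. The volume terms are bounded by Cauchy--Schwarz together with the inverse inequalities $||\nabla_x g||_{0,K}\le Ch^{-1}||g||_{0,K}$ and $||\nabla_v g||_{0,K}\le Ch^{-1}||g||_{0,K}$ from \eqref{eq:inv}, and the edge terms by Cauchy--Schwarz on the faces, the trace bound $||d^n_f||_{0,\mathcal{E}}\le C\tau h^{k+\frac12}$ from part (1), and the inverse trace inequality for $g$. The one genuinely new ingredient is the transport coefficient $E_h^{n,s}+v\times B_h^{n,s}$ in $a_{h,2}$: writing $E_h^{n,s}=E^{n,s}-e_E^{n,s}$, $B_h^{n,s}=B^{n,s}-e_B^{n,s}$ and invoking the $L^\infty$-Assumption, I obtain $||E_h^{n,s}+v\times B_h^{n,s}||_{0,\infty,\Omega}\le C$, since the smooth exact fields are bounded and $||e_E^{n,s}||_{0,\infty},||e_B^{n,s}||_{0,\infty}\le Ch\le C$. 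Collecting the pieces yields $|a_h(d^n_f,E_h^{n,s},B_h^{n,s};g)|\le C\tfrac{\tau}{h}h^{k+1}||g||_{0,\Omega}$, and the second form of \eqref{eq:2.2.1_1} follows from Young's inequality $h^{k+1}||g||_{0,\Omega}\le\tfrac12(h^{2k+2}+||g||_{0,\Omega}^2)$.

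For part (3) I would expand $b_h(d_E^n,d_B^n,d_f^n;U,V)$ into its curl, current, and edge terms. The two volume curl terms simplify cleanly: since $d_E^n,d_B^n$ are $L^2(\Omega_x)$-projection errors onto $\mU_h^k$ while $\nabla\times U,\nabla\times V$ are, on each $K_x$, polynomials of degree $\le k-1$ and hence lie in $\mU_h^k|_{K_x}$, the terms $\int_{\Omega_x}d_B^n\cdot\nabla\times U$ and $\int_{\Omega_x}d_E^n\cdot\nabla\times V$ vanish by orthogonality (alternatively they may be bounded by the inverse inequality, at the same order). The current $j:=\int_{\Omega_v}d_f^n\,v\,dv$ satisfies $||j||_{0,\Omega_x}\le C||d^n_f||_{0,\Omega}\le C\tau h^{k+1}$ by Cauchy--Schwarz in $v$ and the boundedness of $\Omega_v$, so $|\int_{\Omega_x}j\cdot U\,dx|\le C\tau h^{k+1}||U||_{0,\Omega_x}$; the edge terms are handled through Cauchy--Schwarz on $\mathcal{E}_x$, the trace bound from part (1), and the inverse trace inequality for $U,V$. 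Together these give $|b_h(d_E^n,d_B^n,d_f^n;U,V)|\le C\tau h^k(||U||_{0,\Omega_x}+||V||_{0,\Omega_x})$, which delivers the estimate \eqref{eq:2.2.1_4}.

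The step I expect to be the main obstacle is $a_{h,2}$ in part (2). This is the only place where the nonlinear coupling between the Vlasov and Maxwell parts enters, and the estimate hinges entirely on controlling $E_h^{n,s}+v\times B_h^{n,s}$ in $L^\infty$; this is precisely why the $L^\infty$-Assumption is introduced, and why the analysis must eventually discharge it by the induction in section \ref{sec:3.4}. A secondary point requiring care is recognizing, in part (1), that the hypothesis $d_0+d_1+d_2+d_3=0$ is exactly what lowers the order of $w_\diamond$ from $O(1)$ to $O(\tau)$ and thereby produces the crucial extra factor of $\tau$ appearing throughout the lemma.
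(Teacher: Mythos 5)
Your proposal is correct and takes essentially the same route as the paper's proof: part (1) by using $d_0+d_1+d_2+d_3=0$ together with the representations \eqref{eq:exact_new} to exhibit $d^n_\diamond$ as the projection error of an $O(\tau)$ quantity and then invoking \eqref{eq:approx}; part (2) by Cauchy--Schwarz, the inverse inequalities \eqref{eq:inv}, and the $L^\infty$-Assumption to bound $||E_h^{n,s}||_{0,\infty,\Omega_x}+||B_h^{n,s}||_{0,\infty,\Omega_x}$; and part (3) by the orthogonality $\int_{\Omega_x} d_B^n\cdot(\nabla\times U)\,dx=\int_{\Omega_x} d_E^n\cdot(\nabla\times V)\,dx=0$ plus trace and inverse estimates. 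The one discrepancy is harmless: your final bound in part (3) is linear in $||U||_{0,\Omega_x}$, $||V||_{0,\Omega_x}$, while the printed statement \eqref{eq:2.2.1_4} shows squared norms; by homogeneity of $b_h$ in $(U,V)$ the squared form cannot be literally correct, and the paper's own proof and its subsequent use (in the proof of Lemma \ref{lemma:4.4}) produce and rely on exactly your linear form, so the squares in the statement are a typo rather than a gap in your argument.
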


\subsection{The Vlasov equation part}
\label{sec:3.2}

We start with a key decomposition of the error change in one time step \cite{ZS2010}, namely, $\xi_f^{n+1} - \xi_f^{n} = G_1^n + G_2^n + G_3^n$, where $G_1^n = \xi_f^{n,1}-\xi_f^{n}, G_2^n = 2\xi_f^{n,2}-\xi_f^{n,1}-\xi_f^{n}$ and  $G_3^n = \xi_f^{n+1} - 2\xi_f^{n,2} + \xi_f^n$. It is obvious that
\begin{equation}
||G_i^n||_{0,\Omega}^2 \leq C\sum_{\sharp=0}^2 ||\xi_f^{n,\sharp}||_{0,\Omega}^2,\quad i=1, 2, 3.\label{eq:G=f}
\end{equation}
From equations \eqref{eq:err1}, one gets
\begin{subequations}
\begin{eqnarray}
  (G_2^n, g)_{\Omega} &=& \frac{\tau}{2}(\mathcal{K}(g)-\mathcal{J}(g))\equiv \frac{\tau}{2}\mathcal{K}_{RK}(g), \\
  (G_3^n, g)_{\Omega} &=& \frac{\tau}{3}(2\mathcal{L}(g)-\mathcal{K}(g)-\mathcal{J}(g))\equiv \frac{\tau}{3}\mathcal{L}_{RK}(g),
\end{eqnarray}
\end{subequations}
for any $g\in \mG_h^k$.
In addition, one can verify based on \eqref{eq:errf} that
\begin{equation}
 3||\xi_f^{n+1}||_{0,\Omega}^2 - 3||\xi_f^{n}||_{0,\Omega}^2 = \Xi_1+\Xi_2+\Xi_3, \label{eq:101}
\end{equation}
where $\Xi_i= \tau[\mathcal{J}_i(\xi_f^n) + \mathcal{K}_i(\xi_f^{n,1}) + 4\mathcal{L}_i(\xi_f^{n,2})]$, $i=1, 2$
and
$$\Xi_3= (G_2^n, G_2^n)_{\Omega} + 3(G_1^n, G_3^n)_{\Omega} + 3(G_2^n, G_3^n)_{\Omega} + 3(G_3^n, G_3^n)_{\Omega}.$$
In particular,
$\Xi_2$ relies on the phase space discretizations, for which some results were essentially established in the analysis of the semi-discrete DG methods for  the VM system in \cite{CGLM}.
On the other hand, $\Xi_1$ and $\Xi_3$ characterize more the contribution of the time discretization. One will see that there are two mechanisms contributing to numerical stability, one is $\stab_f^\star$ ($\star$ can be $n$ or $n, \sharp$) which comes from the phase space discretization and is also used in analyzing the semi-discrete method in \cite{CGLM}, the other one is $||G_2^n||^2$ which comes from the third order Runge-Kutta time discretization.

\bigskip
We first summarize in Lemma \ref{3.1} some estimates, which are based on the phase space discretization and are  essentially available in the analysis of the semi-discrete upwind DG method in \cite{CGLM}. For completeness, the proofs are given in section \ref{sec:4}.


\begin{lemma}
\label{3.1}
 For $\sharp = 0,1,2$, we have
 \begin{subequations}
 \begin{align}
 (1)& \;\;\ a_h(\xi_f^{n,\sharp}, E_h^{n,\sharp}, B_h^{n,\sharp}; \xi_f^{n,\sharp}) = -\frac{1}{2} \stab_f^{n,\sharp}, \label{eq:3.1.2}\\
 (2)&\;\;  a_h(\eta_f^{n,\sharp}, E_h^{n,\sharp}, B_h^{n,\sharp};\xi_f^{n,\sharp} ) \leq Ch^{2k+1} + C\ene^{n,\sharp}+\frac{1}{16}\stab_f^{n,\sharp}, \quad \textrm{for}\; k\geq \left \lceil \frac{d_x}{2} \right \rceil,\label{eq:3.1.2_1}\\
  (3)&\;\;  |a_h(f^{n,\sharp}, E^{n,\sharp}, B^{n,\sharp}; g) - a_h(f^{n,\sharp}, E^{n,\sharp}_h, B^{n,\sharp}_h; g)|
  \leq C (||e_E^{n,\sharp}||_{0,\Omega} + C||e_B^{n,\sharp}||_{0,\Omega}) ||g||_{0,\Omega},\quad \forall g\in \mG_h^k, \label{eq:3.1.2_2_0}\\
 & \textrm{moreover} \;\;|a_h(f^{n,\sharp}, E^{n,\sharp}, B^{n,\sharp}; \xi_f^{n,\sharp}) - a_h(f^{n,\sharp}, E^{n,\sharp}_h, B^{n,\sharp}_h; \xi_f^{n,\sharp})|
  \leq C h^{2k+2} + C\ene^{n,\sharp}. \label{eq:3.1.2_2}
\end{align}
\end{subequations}
\end{lemma}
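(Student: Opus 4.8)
My plan is to split the trilinear form as $a_h(\cdot, E_h^{n,\sharp}, B_h^{n,\sharp};\cdot) = a_{h,1}(\cdot;\cdot) + a_{h,2}(\cdot, E_h^{n,\sharp}, B_h^{n,\sharp};\cdot)$ and treat the $x$-transport part $a_{h,1}$ and the $v$-transport part $a_{h,2}$ in parallel throughout, since both are upwind DG discretizations of advection whose flow fields are divergence-free and continuous across the relevant interfaces: $v$ is independent of $x$ (so $\nabla_x\cdot v=0$ and $v$ is continuous across $\mathcal{E}_x$), while $E_h^{n,\sharp}+v\times B_h^{n,\sharp}$ is independent of $v$ in its $x$-dependent factors (so $\nabla_v\cdot(E_h^{n,\sharp}+v\times B_h^{n,\sharp})=0$ and the field is continuous across the $v$-faces $\mathcal{E}_v$). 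These two structural facts drive all three parts. For part (1), I would write the volume integrand of $a_{h,1}(\xi_f^{n,\sharp};\xi_f^{n,\sharp})$ as $\tfrac12 v\cdot\nabla_x((\xi_f^{n,\sharp})^2)$, integrate by parts on each $K_x$, and sum over elements; since $v$ is continuous across $\mathcal{E}_x$, the interface sum equals $\int_{T_h^v}\int_{\mathcal{E}_x}\{\xi_f^{n,\sharp}v\}_x\cdot[\xi_f^{n,\sharp}]_x$ by \eqref{eq:jump1}, which cancels the average part of the numerical flux and leaves $-\tfrac12\int_{T_h^v}\int_{\mathcal{E}_x}|v\cdot n_x|\,|[\xi_f^{n,\sharp}]_x|^2$. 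The identical computation for $a_{h,2}$, using $\nabla_v\cdot(E_h^{n,\sharp}+v\times B_h^{n,\sharp})=0$ and continuity across $\mathcal{E}_v$ (with the compact support in $v$ removing boundary contributions), yields $-\tfrac12$ times the $v$-part of $\stab_f^{n,\sharp}$; adding gives $-\tfrac12\stab_f^{n,\sharp}$.

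For part (2) I would again split. In $a_{h,1}(\eta_f^{n,\sharp};\xi_f^{n,\sharp})$ the volume term vanishes because $v\cdot\nabla_x\xi_f^{n,\sharp}\in P^k(K)$ while $\eta_f^{n,\sharp}\perp P^k(K)$. On the interface, the weighted jump $\tfrac{|v\cdot n_x|}{2}[\eta_f^{n,\sharp}]_x\cdot[\xi_f^{n,\sharp}]_x$ is controlled by Young's inequality, producing $\tfrac1{16}$ of the stability term plus $C\int|v\cdot n_x|\,|[\eta_f^{n,\sharp}]_x|^2\le Ch^{2k+1}$ from \eqref{eq:approx}. The delicate piece is the un-weighted average $\{\eta_f^{n,\sharp}v\}_x\cdot[\xi_f^{n,\sharp}]_x$; here I would use $\{\eta_f v\}_x\cdot[\xi_f]_x=\{\eta_f\}_x\,(v\cdot n_x)(\xi_f^+-\xi_f^-)$ and split $|v\cdot n_x|=|v\cdot n_x|^{1/2}\cdot|v\cdot n_x|^{1/2}$, so a weighted Young inequality absorbs $|v\cdot n_x|^{1/2}(\xi_f^+-\xi_f^-)$ into $\stab_f^{n,\sharp}$ and leaves $C\int|v\cdot n_x|\,|\{\eta_f^{n,\sharp}\}_x|^2\le Ch^{2k+1}$. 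This weight-matching is what lets the bare average term be controlled at order $h^{2k+1}$ despite the use of the plain $L^2$ projection; the analogous boundary terms of $a_{h,2}$ are handled identically using $\|E_h^{n,\sharp}+v\times B_h^{n,\sharp}\|_{0,\infty}\le C$. The volume term of $a_{h,2}$, namely $\int_\Omega\eta_f^{n,\sharp}(E_h^{n,\sharp}+v\times B_h^{n,\sharp})\cdot\nabla_v\xi_f^{n,\sharp}$, I would split by writing $E_h^{n,\sharp}=E^{n,\sharp}-e_E^{n,\sharp}$, $B_h^{n,\sharp}=B^{n,\sharp}-e_B^{n,\sharp}$. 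Against the smooth field $E^{n,\sharp}+v\times B^{n,\sharp}$ I subtract its cell average (whose constant part pairs with $\nabla_v\xi_f^{n,\sharp}\in P^{k-1}\subset P^k$ and vanishes against $\eta_f^{n,\sharp}$), and the $O(h)$ remainder gives $Ch^{2k+2}+C\ene^{n,\sharp}$ after the inverse inequality $\|\nabla_v\xi_f^{n,\sharp}\|\le Ch^{-1}\|\xi_f^{n,\sharp}\|$. Against the error field I write $e_E^{n,\sharp}=\xi_E^{n,\sharp}-\eta_E^{n,\sharp}$: the $\eta$-part is higher order, while the discrete part requires $\|\xi_E^{n,\sharp}\|_{0,\infty}\le Ch^{-d_x/2}\|\xi_E^{n,\sharp}\|_0$ together with $\|\nabla_v\xi_f^{n,\sharp}\|\le Ch^{-1}\|\xi_f^{n,\sharp}\|$ and $\|\eta_f^{n,\sharp}\|\le Ch^{k+1}$, leaving a factor $h^{k-d_x/2}$ in front of $\|\xi_E^{n,\sharp}\|_0\|\xi_f^{n,\sharp}\|_0$. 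This factor is bounded precisely when $k\ge\lceil d_x/2\rceil$, which is the origin of the stated constraint, after which Young's inequality gives $C\ene^{n,\sharp}$. I expect this discrete-error volume term to be the main obstacle of the lemma, since it is the only place where the polynomial-degree restriction is forced and where the interplay of approximation and inverse estimates is sharpest.

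For part (3), since $a_{h,1}$ does not involve the fields, only $a_{h,2}$ contributes to the difference, and by its joint linearity in $(E,B)$ the difference equals $a_{h,2}(f^{n,\sharp},e_E^{n,\sharp},e_B^{n,\sharp};g)$. Because $f^{n,\sharp}$ is the exact (hence $v$-continuous, compactly supported) function and the flow field $e_E^{n,\sharp}+v\times e_B^{n,\sharp}$ is continuous across $\mathcal{E}_v$ and divergence-free in $v$, I would integrate the volume term by parts back onto $f^{n,\sharp}$; the element-boundary contributions then cancel exactly with the numerical-flux term, whose jump-of-$f$ piece vanishes by continuity, leaving the clean primal form $-\int_\Omega(e_E^{n,\sharp}+v\times e_B^{n,\sharp})\cdot\nabla_v f^{n,\sharp}\,g$. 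A Cauchy-Schwarz estimate using the boundedness of $\Omega_v$ and $\|\nabla_v f^{n,\sharp}\|_{0,\infty}\le C$ then yields the first bound $C(\|e_E^{n,\sharp}\|_0+\|e_B^{n,\sharp}\|_0)\|g\|_0$. The second bound follows by taking $g=\xi_f^{n,\sharp}$, writing $e_E^{n,\sharp}=\xi_E^{n,\sharp}-\eta_E^{n,\sharp}$ and likewise for $e_B^{n,\sharp}$, bounding $\|\eta_E^{n,\sharp}\|_0,\|\eta_B^{n,\sharp}\|_0\le Ch^{k+1}$ by \eqref{eq:approx}, and applying Young's inequality to reach $Ch^{2k+2}+C\ene^{n,\sharp}$.
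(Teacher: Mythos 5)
Your proposal is correct, and its skeleton coincides with the paper's own proof: the same $a_{h,1}$/$a_{h,2}$ splitting, integration by parts plus the jump identity \eqref{eq:jump1} for part (1), orthogonality of $\eta_f^{n,\sharp}$ plus weight-matched Cauchy--Schwarz and Young's inequality for part (2), and the divergence-theorem cancellation based on $[f^{n,\sharp}]_v=0$ for part (3). Three local deviations are worth recording. (i) For the volume terms in part (2) you use exact orthogonality, e.g.\ $v\cdot\nabla_x\xi_f^{n,\sharp}\in P^k(K)$, which is legitimate here because $P^k$ is the \emph{total-degree} space; the paper instead subtracts piecewise-constant projections ($v_0$, $\Pi_x^0E^{n,\sharp}$, $\Pi_x^0B^{n,\sharp}$) and bounds the $O(h)$ remainders, invoking the $L^\infty$-stability of $\Pi_x^k$ \cite{CrouzeixThomee:1987} --- an argument that would also survive for tensor-product spaces, where your inclusion fails. (ii) More substantively, for the $\mathcal{E}_v$ edge term of $a_{h,2}(\eta_f^{n,\sharp},E_h^{n,\sharp},B_h^{n,\sharp};\xi_f^{n,\sharp})$ you invoke $\|E_h^{n,\sharp}+v\times B_h^{n,\sharp}\|_{0,\infty,\Omega}\le C$, which rests on the a priori $L^\infty$-Assumption; the paper's proof of this particular lemma avoids the assumption altogether, writing $\|E_h^{n,\sharp}\|_{0,\infty,\Omega_x}^{1/2}\le C\bigl(1+h^{-d_x/4}\|\xi_E^{n,\sharp}\|_{0,\Omega_x}^{1/2}\bigr)$ as in \eqref{eq:3.8} and absorbing the extra factor via Young's inequality and $k\ge\lceil d_x/2\rceil$. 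Your shortcut is harmless in context, since Lemma \ref{3.1} is only applied inside Theorem \ref{thm:vlas}, which is itself stated under the $L^\infty$-Assumption, but the paper's route keeps the lemma unconditional. (iii) Your justification of part (3) by ``joint linearity in $(E,B)$'' is literally false --- the upwind penalty $\frac{|(E+v\times B)\cdot n_v|}{2}[f]_v$ is not linear in the fields, as the paper itself points out --- but you repair it immediately with the observation that $[f^{n,\sharp}]_v=0$ annihilates this term; once it is gone, the remaining terms are linear in the fields and your computation is exactly the paper's.
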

\begin{proposition}
\label{3.4}
The following estimates hold for $\Xi_1$ and $\Xi_2$,
\begin{subequations}
 \begin{align}
 (1)&\;\;\; \Xi_1 \leq C\tau(h^{2k+2} +\tau^6)+  C\tau\sum_{\sharp=0}^2||\xi_f^{n,\sharp}||_{0,\Omega}^2,\label{eq:est:xi0}\\
 (2)&\;\;\; \Xi_2
  \leq  C\tau h^{2k+1} + C\tau \sum_{\sharp=0}^2 \ene^{n,\sharp}
  -\frac{7}{16}\tau \left(\stab_f^{n}+\stab_f^{n,1}+4\;\stab_f^{n,2}\right), \forall k \geq \left \lceil \frac{d_x}{2} \right \rceil.  \label{eq:est:xi1}
\end{align}
\end{subequations}
\end{proposition}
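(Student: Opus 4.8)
The plan is to treat $\Xi_1$ and $\Xi_2$ separately, exploiting the fact that the decomposition $\Xi_i = \tau[\mathcal{J}_i(\xi_f^n) + \mathcal{K}_i(\xi_f^{n,1}) + 4\mathcal{L}_i(\xi_f^{n,2})]$ already isolates the two distinct mechanisms: $\Xi_1$ collects the temporal and approximation contributions (the $\eta$-differences and the truncation error), while $\Xi_2$ collects the spatial $a_h$-contributions inside which the stability term $\stab_f$ is hidden. For part $(1)$ I would observe that each of $\mathcal{J}_1, \mathcal{K}_1, \mathcal{L}_1$ pairs its test function with a linear combination of the projection errors $\eta_f^n,\eta_f^{n,1},\eta_f^{n,2},\eta_f^{n+1}$ whose coefficients sum to zero in each case (namely $-1,1$; then $-3,-1,4$; then $-1,-2,3$), divided by $\tau$. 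Writing each such combination as a $d_f^n$ in the sense of Lemma \ref{lemma:2} and invoking \eqref{eq:2.2.1} gives $\|d_f^n\|_{0,\Omega}\leq C\tau h^{k+1}$, so that the gain of one power of $\tau$ exactly cancels the $1/\tau$. A Cauchy--Schwarz step then bounds $|\mathcal{J}_1(\xi_f^n)|$, $|\mathcal{K}_1(\xi_f^{n,1})|$ and the $\eta$-part of $\mathcal{L}_1(\xi_f^{n,2})$ by $Ch^{k+1}\|\xi_f^{n,\sharp}\|_{0,\Omega}$, which Young's inequality turns into $C(h^{2k+2}+\|\xi_f^{n,\sharp}\|_{0,\Omega}^2)$. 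The remaining truncation term $\left(\frac{3T_f^n}{2\tau},\xi_f^{n,2}\right)_{\Omega}$ is controlled by $\|T_f^n\|_{0,\Omega}\leq C\tau^4$ from Lemma \ref{lemma:1}, yielding $C\tau^3\|\xi_f^{n,2}\|_{0,\Omega}\leq C(\tau^6+\|\xi_f^{n,2}\|_{0,\Omega}^2)$. Summing and restoring the outer factor $\tau$ gives \eqref{eq:est:xi0}.

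For part $(2)$ the heart of the matter is the decomposition of each $a_h$-difference appearing in $\mathcal{J}_2,\mathcal{K}_2,\mathcal{L}_2$. Taking $\mathcal{J}_2(\xi_f^n)$ as the model, I would insert the intermediate term $a_h(f^n,E_h^n,B_h^n;\xi_f^n)$ and split into a \emph{field-perturbation} part $a_h(f^n,E^n,B^n;\xi_f^n)-a_h(f^n,E_h^n,B_h^n;\xi_f^n)$ and a \emph{density-perturbation} part $a_h(f^n,E_h^n,B_h^n;\xi_f^n)-a_h(f_h^n,E_h^n,B_h^n;\xi_f^n)$. The field-perturbation part is exactly the quantity bounded by $Ch^{2k+2}+C\ene^n$ in \eqref{eq:3.1.2_2}. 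For the density part, linearity of $a_h$ in its first argument gives $a_h(e_f^n,E_h^n,B_h^n;\xi_f^n)=a_h(\xi_f^n,E_h^n,B_h^n;\xi_f^n)-a_h(\eta_f^n,E_h^n,B_h^n;\xi_f^n)$; the first term equals $-\frac12\stab_f^n$ by \eqref{eq:3.1.2}, and the second is controlled (up to sign, again using linearity in the first argument) by $Ch^{2k+1}+C\ene^n+\frac1{16}\stab_f^n$ via \eqref{eq:3.1.2_1}, valid for $k\geq\lceil d_x/2\rceil$. The two stability contributions combine to $-\frac12+\frac1{16}=-\frac{7}{16}$, so $\mathcal{J}_2(\xi_f^n)\leq Ch^{2k+1}+C\ene^n-\frac{7}{16}\stab_f^n$, after absorbing $h^{2k+2}$ into $h^{2k+1}$ since $h\leq 1$.

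The identical treatment applied to $\mathcal{K}_2(\xi_f^{n,1})$ and $\mathcal{L}_2(\xi_f^{n,2})$ produces the analogous bounds with superscripts $n,1$ and $n,2$. Forming $\mathcal{J}_2+\mathcal{K}_2+4\mathcal{L}_2$, multiplying by $\tau$, and absorbing the finitely many constants (note the weights $1,1,4$ carry over to both $\ene^{n,\sharp}$ and $\stab_f^{n,\sharp}$, and $\ene^n+\ene^{n,1}+4\ene^{n,2}\leq 4\sum_{\sharp}\ene^{n,\sharp}$) yields \eqref{eq:est:xi1}.

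The step I expect to be most delicate is the sign bookkeeping in the density-perturbation part: one must keep track that \eqref{eq:3.1.2} supplies a genuinely negative $-\frac12\stab_f^n$ while \eqref{eq:3.1.2_1} only returns $+\frac1{16}\stab_f^n$, so that a strictly negative coefficient $-\frac{7}{16}$ survives. This residual negative stability term is precisely what must later absorb the positive boundary contributions generated when the Vlasov and Maxwell estimates are combined, so losing it to a careless estimate would break the whole argument. The genuinely nonlinear coupling, carried by $a_{h,2}$ and by the numerical fields entering the upwind flux, never has to be confronted directly at this stage, since it has already been packaged into Lemma \ref{3.1}, whose proof is where the $L^\infty$-Assumption is consumed.
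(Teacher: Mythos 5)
Your proposal is correct and follows essentially the same route as the paper: part $(1)$ via Lemma \ref{lemma:2}-(1) and Lemma \ref{lemma:1} together with Cauchy--Schwarz and Young's inequality, and part $(2)$ via the three-term splitting of each $a_h$-difference handled by Lemma \ref{3.1}, with the same $-\tfrac12+\tfrac1{16}=-\tfrac{7}{16}$ bookkeeping (the paper writes out only the representative terms $\mathcal{L}_1(\xi_f^{n,2})$ and $\mathcal{J}_2(\xi_f^n)$ and appeals to similarity, exactly as you do). Your explicit remark that the bound \eqref{eq:3.1.2_1} must be read as controlling the absolute value, since the term enters with a minus sign, is a point the paper glosses over silently.
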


\begin{proof}
 Recall $\Xi_1= \tau[\mathcal{J}_1(\xi_f^n) + \mathcal{K}_1(\xi_f^{n,1}) + 4\mathcal{L}_1(\xi_f^{n,2})]$, with similarity, we only estimate
       $\mathcal{L}_1(\xi_f^{n,2})$. Applying Cauchy-Schwarz inequality, \eqref{eq:2.2.1} in Lemma \ref{lemma:2}, and truncation error estimate in Lemma \ref{lemma:1}, we get
  \begin{eqnarray}
  \label{eq:3.4.5}
  \mathcal{L}_1(\xi_f^{n,2}) &=& \left( \frac{3\eta_f^{n+1} -\eta_f^n - 2\eta_f^{n,2} + 3T_f^n(x,v)}{2\tau},\xi_f^{n,2} \right)_{\Omega} \nonumber\\
  &\leq& \frac{1}{2\tau} \left(||3\eta_f^{n+1} -\eta_f^n - 2\eta_f^{n,2}||_{0,\Omega} + 3||T_f^n||_{0,\Omega} \right)
  ||\xi_f^{n,2}||_{0,\Omega}                                    \nonumber\\
  &\leq& C(h^{k+1} + \tau^3) ||\xi_f^{n,2}||_{0,\Omega}
   \leq C(h^{2k+2} +\tau^6)+  C||\xi_f^{n,2}||_{0,\Omega}^2.
 \end{eqnarray}
 To estimate $\Xi_2$, due to similarity, we will only estimate $\mathcal{J}_2(\xi_f^n)$. Using the results in Lemma \ref{3.1}, one has
  \begin{eqnarray*}
  \mathcal{J}_2(\xi_f^n) &=& a_h(f^n, E^n, B^n; \xi_f^n) - a_h(f^n_h, E^n_h, B^n_h; \xi_f^n)       \notag\\
  &=& a_h(\xi_f^n, E^n_h, B^n_h; \xi_f^n) + (a_h(f^n, E^n, B^n; \xi_f^n) - a_h(f^n, E^n_h, B^n_h; \xi_f^n))- a_h(\eta_f^n, E^n_h, B_h^n; \xi_f^n) \notag\\
   &\leq& C h^{2k+1} + C\ene^n-\frac{7}{16}\stab_f^n.
   \end{eqnarray*}
\end{proof}

\bigskip
Next we will estimate $\Xi_3$. One key is to use $-||G_2^n||_{0,\Omega}^2$ to control $(C \frac{\tau}{h}+C\frac{\tau^2}{h^2})||G_2^n||_{0,\Omega}^2$
under some condition on the time step $\tau$.
 To make the details tractable, we first give some preparatory results.


\begin{lemma}
\label{lemma:3.5_7}
 For $r=0,1,2$, $s=0,1$, and any $g \in \mathcal{G}_h^k$
 \begin{subequations}
 \begin{eqnarray}
 (1)&&   |a_h(\eta_f^{n,r}, E_h^{n,s+1}, B_h^{n,s+1}; g) - a_h(\eta_f^{n,r}, E_h^{n,s}, B_h^{n,s}; g)|
  \leq  C(1 + \frac{\tau}{h}) h^{k+1} ||g||_{0,\Omega}, \label{eq:lem:3.5_7.1}\\
 (2) & & |a_h(\xi_f^{n,r}, E_h^{n,s+1}, B_h^{n,s+1}; g) - a_h(\xi_f^{n,r}, E_h^{n,s}, B_h^{n,s}; g)|
  \leq C(1 + \frac{\tau}{h}) ||\xi_f^{n,r}||_{0,\Omega} ||g||_{0,\Omega}.\label{eq:lem:3.5_7.2}
 \end{eqnarray}
 \end{subequations}
\end{lemma}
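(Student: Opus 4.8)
The plan is to exploit the splitting $a_h=a_{h,1}+a_{h,2}$ from Section \ref{sec:2.2}. Since $a_{h,1}(f_h;g)$ depends only on $f_h$ and $g$ and not on the fields, it cancels in both differences, so that
\[
a_h(w, E_h^{n,s+1}, B_h^{n,s+1}; g) - a_h(w, E_h^{n,s}, B_h^{n,s}; g) = a_{h,2}(w, E_h^{n,s+1}, B_h^{n,s+1}; g) - a_{h,2}(w, E_h^{n,s}, B_h^{n,s}; g)
\]
for $w=\eta_f^{n,r}$ in part (1) and $w=\xi_f^{n,r}$ in part (2). Writing $\delta E = E_h^{n,s+1}-E_h^{n,s}$ and $\delta B = B_h^{n,s+1}-B_h^{n,s}$, and using linearity of the volume and average-flux contributions of $a_{h,2}$ in the fields together with the inequality $\bigl||a|-|b|\bigr|\leq|a-b|$ applied to the upwind dissipation coefficient $|(E_h+v\times B_h)\cdot n_v|$, the right-hand side reduces to a volume term $\int_\Omega w\,(\delta E + v\times\delta B)\cdot\nabla_v g$ plus edge terms over $T_h^x\times\mathcal{E}_v$, each carrying exactly one factor of $\delta E$ or $\delta B$.

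The key estimate is the $L^\infty$ bound on the stage increment,
\[
||\delta E||_{0,\infty,\Omega_x} + ||\delta B||_{0,\infty,\Omega_x} \leq C(h+\tau).
\]
I would prove this by the triangle inequality $\delta E = (E^{n,s+1}-E^{n,s}) + e_E^{n,s} - e_E^{n,s+1}$: the error terms $e_E^{n,s}, e_E^{n,s+1}$ are each bounded by $Ch$ in $L^\infty$ by the $L^\infty$-Assumption (valid for all stages $\sharp=0,1,2$, hence for both $s$ and $s+1$ when $s=0,1$), while the difference of the exact stages is $O(\tau)$ in $L^\infty$ by \eqref{eq:exact_new} and the smoothness of the exact solution (for instance $E^{n,1}-E^n=\tau\partial_t E^n$ and $E^{n,2}-E^{n,1}=-\frac{\tau}{2}\partial_t E^n+\frac{\tau^2}{4}\partial_t^2 E^n$). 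The same argument applies to $\delta B$.

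With this in hand, each term is estimated by Cauchy-Schwarz, bounding $|\delta E + v\times\delta B|\leq C(||\delta E||_{0,\infty,\Omega_x}+||\delta B||_{0,\infty,\Omega_x})$ pointwise since $\Omega_v$ is bounded. For the volume term the inverse inequality \eqref{eq:inv} gives $||\nabla_v g||_{0,\Omega}\leq Ch^{-1}||g||_{0,\Omega}$, and for the edge terms the trace inverse inequality gives $||[g]_v||_{0,T_h^x\times\mathcal{E}_v}\leq Ch^{-\frac{1}{2}}||g||_{0,\Omega}$. In part (1), the approximation estimates \eqref{eq:approx}, namely $||\eta_f^{n,r}||_{0,\Omega}\leq Ch^{k+1}$ and $||\eta_f^{n,r}||_{0,T_h^x\times\mathcal{E}_v}\leq Ch^{k+\frac{1}{2}}$, then bound every term by $C(h+\tau)\,h^{-1}\,h^{k+1}||g||_{0,\Omega}=C(1+\frac{\tau}{h})h^{k+1}||g||_{0,\Omega}$. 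In part (2) the approximation estimates are simply replaced by keeping $||\xi_f^{n,r}||_{0,\Omega}$ explicit (with $||[\xi_f^{n,r}]_v||_{0,T_h^x\times\mathcal{E}_v}\leq Ch^{-\frac{1}{2}}||\xi_f^{n,r}||_{0,\Omega}$), giving $C(h+\tau)h^{-1}||\xi_f^{n,r}||_{0,\Omega}||g||_{0,\Omega}=C(1+\frac{\tau}{h})||\xi_f^{n,r}||_{0,\Omega}||g||_{0,\Omega}$.

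The main obstacle is the $L^\infty$ estimate on $\delta E,\delta B$: this is precisely where the $L^\infty$-Assumption on the field errors enters, and where the nonlinear coupling of the Vlasov and Maxwell parts is felt, since $\delta E,\delta B$ multiply the density in $a_{h,2}$. One must be careful that the estimate holds uniformly for $s=0,1$, i.e. across all three Runge-Kutta stages, which is why the $L^\infty$-Assumption is posed for $\sharp=0,1,2$; the apparent circularity is resolved only later, when this assumption is closed by the induction in Section \ref{sec:3.4}. A secondary technical point is the non-smoothness of the upwind coefficient $|(E_h+v\times B_h)\cdot n_v|$, which is handled cleanly by $\bigl||a|-|b|\bigr|\leq|a-b|$ so that its variation is again controlled by $||\delta E||_{0,\infty,\Omega_x}+||\delta B||_{0,\infty,\Omega_x}$.
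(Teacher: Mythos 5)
Your proposal is correct and follows essentially the same route as the paper: cancel the $a_{h,1}$ part, reduce the difference to volume and edge terms of $a_{h,2}$ each carrying one factor of $\delta E$ or $\delta B$ (with $\bigl||a|-|b|\bigr|\leq|a-b|$ for the upwind coefficient), establish $\|\delta E\|_{0,\infty,\Omega_x}+\|\delta B\|_{0,\infty,\Omega_x}\leq C(h+\tau)$ via the $L^\infty$-Assumption together with \eqref{eq:exact_new}, and finish with the approximation property \eqref{eq:approx} (part (1)) or inverse/trace inequalities \eqref{eq:inv} (part (2)). This matches the paper's proof in Section \ref{proof:lemma:3.5_7}, including its key inequality \eqref{inequalityE}.
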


%
%
\begin{lemma}
\label{lemma:3.9}
 For any $g \in \mathcal{G}_h^k$, we have
 \begin{subequations}
  \begin{align}
   \mathcal{L}_{RK}(g)
 &\leq
 C\left((1+\frac{\tau}{h})\sum_{\sharp=0}^{2} (||\xi_E^{n,\sharp}||_{0,\Omega_x} + ||\xi_B^{n,\sharp}||_{0,\Omega_x} + ||\xi_f^{n,\sharp}||_{0,\Omega}+h^{k+1}) +\tau^3 \right) ||g||_{0,\Omega} +a_h(G_2^n, E_h^{n,1}, B_h^{n,1}; g) \label{eq:lem:3.9_1}\\
 &\leq
  C\left((1+\frac{\tau}{h})\sum_{\sharp=0}^{2} (||\xi_E^{n,\sharp}||_{0,\Omega_x} + ||\xi_B^{n,\sharp}||_{0,\Omega_x} + ||\xi_f^{n,\sharp}||_{0,\Omega}+ h^{k+1}) +\tau^3 + \frac{||G_2^n||_{0,\Omega}}{h} \right) ||g||_{0,\Omega}.\label{eq:lem:3.9_2}\\
  \mathcal{K}_{RK}(g)
 &\leq
 C(1+\frac{\tau}{h})\left(\sum_{\sharp=0}^{2} (||\xi_E^{n,\sharp}||_{0,\Omega_x} + ||\xi_B^{n,\sharp}||_{0,\Omega_x} + ||\xi_f^{n,\sharp}||_{0,\Omega}+h^{k+1}) \right) ||g||_{0,\Omega} +a_h(G_1^n, E_h^{n,1}, B_h^{n,1}; g). \label{eq:lem:3.9_3}
 \end{align}
 \end{subequations}
\end{lemma}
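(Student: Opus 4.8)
The plan is to treat $\mathcal{K}_{RK}$ and $\mathcal{L}_{RK}$ in parallel, since $\mathcal{K}_{RK}=\mathcal{K}-\mathcal{J}$ and $\mathcal{L}_{RK}=2\mathcal{L}-\mathcal{K}-\mathcal{J}$ have the same algebraic shape. First I would split each functional into its projection/truncation part (the terms carrying $\eta_f$ and $T_f^n$) and its discrete-operator part (the differences of $a_h$). For the projection part, the linear combinations of $\eta_f$ that appear have coefficients summing to zero: the $\eta$-contribution to $\mathcal{K}_{RK}$ is $\frac{1}{\tau}(4\eta_f^{n,2}-2\eta_f^n-2\eta_f^{n,1},g)_\Omega$, and that to $\mathcal{L}_{RK}$ is $\frac{3}{\tau}(\eta_f^{n+1}+\eta_f^n-2\eta_f^{n,2},g)_\Omega$. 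Lemma \ref{lemma:2}-(1) bounds each such combination by $C\tau h^{k+1}$ in $L^2$, so after dividing by $\tau$ and applying Cauchy--Schwarz they contribute at most $Ch^{k+1}\|g\|_{0,\Omega}$. The truncation term $\frac{3}{\tau}T_f^n$ present only in $\mathcal{L}_{RK}$ is handled by Lemma \ref{lemma:1}, giving $C\tau^3\|g\|_{0,\Omega}$. These already supply the $h^{k+1}$ and $\tau^3$ entries on the right-hand sides.

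The heart of the argument is the discrete-operator part. For each stage $\sharp$ I would decompose $a_h(f^{n,\sharp},E^{n,\sharp},B^{n,\sharp};g)-a_h(f_h^{n,\sharp},E_h^{n,\sharp},B_h^{n,\sharp};g)$ into three pieces: an $(E,B)$-variation piece $a_h(f^{n,\sharp},E^{n,\sharp},B^{n,\sharp};g)-a_h(f^{n,\sharp},E_h^{n,\sharp},B_h^{n,\sharp};g)$, and --- using linearity of $a_h$ in its first slot together with $e_f^{n,\sharp}=\xi_f^{n,\sharp}-\eta_f^{n,\sharp}$ --- a $\xi_f$ piece $a_h(\xi_f^{n,\sharp},E_h^{n,\sharp},B_h^{n,\sharp};g)$ and an $\eta_f$ piece $-a_h(\eta_f^{n,\sharp},E_h^{n,\sharp},B_h^{n,\sharp};g)$. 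The $(E,B)$-variation piece is controlled by Lemma \ref{3.1}-(3); since $e_E^{n,\sharp}=\xi_E^{n,\sharp}-\eta_E^{n,\sharp}$ with $\|\eta_E^{n,\sharp}\|\leq Ch^{k+1}$ (and likewise for $B$, the constant absorbing $|\Omega_v|^{1/2}$ when passing from $\Omega$ to $\Omega_x$), it contributes $C\sum_{\sharp}(\|\xi_E^{n,\sharp}\|_{0,\Omega_x}+\|\xi_B^{n,\sharp}\|_{0,\Omega_x}+h^{k+1})\|g\|_{0,\Omega}$, matching the claimed bound.

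Next I would reorganize the $\xi_f$ and $\eta_f$ pieces so that the electromagnetic arguments are all aligned to the first stage $(E_h^{n,1},B_h^{n,1})$. For the $\xi_f$ pieces this alignment produces exactly $a_h(G_1^n,E_h^{n,1},B_h^{n,1};g)$ (resp. $a_h(G_2^n,E_h^{n,1},B_h^{n,1};g)$), since $G_1^n=\xi_f^{n,1}-\xi_f^n$ and $G_2^n=2\xi_f^{n,2}-\xi_f^{n,1}-\xi_f^n$, while the stage-to-stage correction terms $a_h(\xi_f^{n,r},E_h^{n,s+1},B_h^{n,s+1};g)-a_h(\xi_f^{n,r},E_h^{n,s},B_h^{n,s};g)$ are bounded by Lemma \ref{lemma:3.5_7}-(2), contributing $C(1+\frac{\tau}{h})\sum_{\sharp}\|\xi_f^{n,\sharp}\|_{0,\Omega}\|g\|_{0,\Omega}$. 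For the $\eta_f$ pieces the same alignment yields a single zero-sum combination $a_h(\sum_{\sharp} d_\sharp\eta_f^{n,\sharp},E_h^{n,1},B_h^{n,1};g)$ bounded by Lemma \ref{lemma:2}-(2) ($\leq C\frac{\tau}{h}h^{k+1}\|g\|_{0,\Omega}$), plus correction terms bounded by Lemma \ref{lemma:3.5_7}-(1) ($\leq C(1+\frac{\tau}{h})h^{k+1}\|g\|_{0,\Omega}$). Collecting all contributions gives \eqref{eq:lem:3.9_1} and \eqref{eq:lem:3.9_3}. Finally, \eqref{eq:lem:3.9_2} follows from \eqref{eq:lem:3.9_1} by estimating the isolated term $a_h(G_2^n,E_h^{n,1},B_h^{n,1};g)$ through the inverse inequalities \eqref{eq:inv} --- the volume gradient and upwind edge terms each produce one factor of $h^{-1}$ --- together with the $L^\infty$-Assumption, which keeps the coefficient $E_h^{n,1}+v\times B_h^{n,1}$ uniformly bounded; this yields $a_h(G_2^n,E_h^{n,1},B_h^{n,1};g)\leq C\frac{\|G_2^n\|_{0,\Omega}}{h}\|g\|_{0,\Omega}$.

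The main obstacle I anticipate is the bookkeeping around the nonlinear coupling term $a_{h,2}$: every manipulation must respect that $a_h$ is linear only in $f$, not in $(E,B)$, so the electromagnetic arguments have to be realigned by hand before any lemma applies, and it is precisely these realignment (stage-difference) terms that fail to be $O(\tau)$-small on their own and instead carry the dangerous factor $(1+\frac{\tau}{h})$. Controlling them uniformly rests on the $L^\infty$-Assumption for $e_E,e_B$ and on the regularity hypotheses underlying the supporting lemmas; keeping straight which correction is governed by Lemma \ref{lemma:3.5_7}-(1) versus -(2) --- i.e. whether the first slot carries $\eta_f$ or $\xi_f$ --- is where most of the care is needed.
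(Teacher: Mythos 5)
Your proposal is correct and follows essentially the same route as the paper's proof: the same four-way decomposition (projection/truncation terms, the $(E,B)$-variation terms handled by Lemma \ref{3.1}-(3), the $\eta_f$ terms handled by Lemma \ref{lemma:2} plus Lemma \ref{lemma:3.5_7}-(1), and the $\xi_f$ terms realigned to $(E_h^{n,1},B_h^{n,1})$ via Lemma \ref{lemma:3.5_7}-(2) to produce $a_h(G_i^n,E_h^{n,1},B_h^{n,1};g)$), followed by the same inverse-inequality plus $L^\infty$-Assumption argument for \eqref{eq:lem:3.9_2}. The only deviation is cosmetic: you align all electromagnetic arguments of the $\eta_f$ pieces to stage $1$ before forming a single zero-sum combination, whereas the paper groups them into two zero-sum differences at different stages plus corrections centered on $\eta_f^{n,1}$ --- an equivalent regrouping using the same lemmas.
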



\begin{lemma}
\label{lemma:3.8}
 \begin{eqnarray}
 & &|a_h(G_1^n, E_h^{n,1}, B_h^{n,1}; G_2^n) + a_h(G_2^n, E_h^{n,1}, B_h^{n,1}; G_1^n)|                           \nonumber\\
 &\leq& C \left(1 + \frac{\tau}{h} \right) ||\xi_f^n||_{0,\Omega}^2 + \frac{C}{h}||G_2^n||_{0,\Omega}^2
   + \frac{1}{16} (\stab_f^{n} + \stab_f^{n,1}).
 \end{eqnarray}
\end{lemma}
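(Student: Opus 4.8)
The plan is to exploit the bilinearity of the form together with a polarization identity. With the fields $E_h^{n,1}, B_h^{n,1}$ held fixed, $a_h=a_{h,1}+a_{h,2}$ is linear in both its first and its last argument, so writing $A(p,q):=a_h(p,E_h^{n,1},B_h^{n,1};q)$ the quantity to be bounded is
\[
A(G_1^n,G_2^n)+A(G_2^n,G_1^n)=A(G_1^n+G_2^n,G_1^n+G_2^n)-A(G_1^n,G_1^n)-A(G_2^n,G_2^n).
\]
The same energy computation that yields Lemma~\ref{3.1}-(1) uses only that the transport fields are fixed, not that their stage index matches the density, so it gives $A(p,p)=-\frac12\,\widetilde{\stab}(p)$ for every $p\in\mG_h^k$, where
\[
\widetilde{\stab}(p)=\int_{T_h^v}\int_{\mathcal{E}_x}|v\cdot n_x|\,|[p]_x|^2\,ds_x\,dv+\int_{T_h^x}\int_{\mathcal{E}_v}|(E_h^{n,1}+v\times B_h^{n,1})\cdot n_v|\,|[p]_v|^2\,ds_v\,dx
\]
is the stabilization quadratic form built from the stage-$1$ fields. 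Substituting this identity into the polarization formula cancels the diagonal terms and leaves $-\widetilde{B}(G_1^n,G_2^n)$, where $\widetilde{B}$ is the symmetric bilinear form polarizing $\widetilde{\stab}$, i.e.\ a weighted inner product of the edge jumps with nonnegative weight.

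Next I would apply Cauchy--Schwarz for this weighted jump inner product, followed by Young's inequality with a small parameter $\delta$:
\[
|A(G_1^n,G_2^n)+A(G_2^n,G_1^n)|=|\widetilde{B}(G_1^n,G_2^n)|\le \widetilde{\stab}(G_1^n)^{1/2}\,\widetilde{\stab}(G_2^n)^{1/2}\le \delta\,\widetilde{\stab}(G_1^n)+\frac{1}{4\delta}\widetilde{\stab}(G_2^n).
\]
For the $G_2^n$ term I would bound the weights by a constant, using $|v\cdot n_x|\le C$ on the bounded velocity domain and $|(E_h^{n,1}+v\times B_h^{n,1})\cdot n_v|\le C$ after controlling $\|E_h^{n,1}\|_{0,\infty,\Omega_x}$ and $\|B_h^{n,1}\|_{0,\infty,\Omega_x}$ via the boundedness of the exact fields and the $L^\infty$-Assumption, and then apply the inverse trace inequality \eqref{eq:inv} to obtain $\widetilde{\stab}(G_2^n)\le \frac{C}{h}\|G_2^n\|_{0,\Omega}^2$. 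With $\delta$ fixed, this produces the $\frac{C}{h}\|G_2^n\|_{0,\Omega}^2$ contribution.

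For the $G_1^n=\xi_f^{n,1}-\xi_f^n$ term I would use $|[G_1^n]|^2\le 2|[\xi_f^{n,1}]|^2+2|[\xi_f^n]|^2$ to split $\widetilde{\stab}(G_1^n)$ into a $\xi_f^{n,1}$-part and a $\xi_f^n$-part. The $\xi_f^{n,1}$-part is $\le 2\,\stab_f^{n,1}$ since its weights at both families of edges are exactly the stage-$1$ weights. In the $\xi_f^n$-part the $x$-edge weight $|v\cdot n_x|$ already matches $\stab_f^n$, while at the $v$-edges the weight carries $E_h^{n,1},B_h^{n,1}$ rather than $E_h^n,B_h^n$; I would remove this mismatch by
\[
|(E_h^{n,1}+v\times B_h^{n,1})\cdot n_v|\le |(E_h^n+v\times B_h^n)\cdot n_v|+|((E_h^{n,1}-E_h^n)+v\times(B_h^{n,1}-B_h^n))\cdot n_v|.
\]
The first term reconstitutes (part of) $\stab_f^n$, and the residual weight is controlled by $\|E_h^{n,1}-E_h^n\|_{0,\infty,\Omega_x}+C\|B_h^{n,1}-B_h^n\|_{0,\infty,\Omega_x}\le C(\tau+h)$, since $E_h^{n,1}-E_h^n=(E^{n,1}-E^n)-(e_E^{n,1}-e_E^n)$ with $\|E^{n,1}-E^n\|_{0,\infty,\Omega_x}=\tau\|\partial_t E^n\|_{0,\infty,\Omega_x}\le C\tau$ from \eqref{eq:exact_new} and $\|e_E^{n,1}\|_{0,\infty,\Omega_x}+\|e_E^n\|_{0,\infty,\Omega_x}\le Ch$ from the $L^\infty$-Assumption (and analogously for $B$). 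Applying the inverse trace inequality to this residual gives $C(\tau+h)h^{-1}\|\xi_f^n\|_{0,\Omega}^2=C(1+\frac{\tau}{h})\|\xi_f^n\|_{0,\Omega}^2$. Altogether $\widetilde{\stab}(G_1^n)\le C(\stab_f^n+\stab_f^{n,1})+C(1+\frac{\tau}{h})\|\xi_f^n\|_{0,\Omega}^2$, and choosing $\delta$ small enough that $C\delta\le\frac{1}{16}$ delivers the stated estimate.

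The main obstacle is the $v$-edge weight mismatch in $\widetilde{\stab}(G_1^n)$: the stage-$1$ transport speed $E_h^{n,1}+v\times B_h^{n,1}$ does not match the speed $E_h^n+v\times B_h^n$ underlying $\stab_f^n$. Treating it cleanly requires the $L^\infty$-Assumption, to keep $E_h^{n,1}-E_h^n$ of size $O(\tau+h)$ rather than uncontrolled, combined with the inverse trace inequality; it is precisely the product $(\tau+h)\cdot h^{-1}$ that generates the $1+\frac{\tau}{h}$ factor in the final bound. A secondary point worth verifying is that the energy identity $A(p,p)=-\frac12\widetilde{\stab}(p)$ still holds when the density stage index differs from the field stage index, which it does because that computation never uses matching indices.
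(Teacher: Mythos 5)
Your proposal is correct and follows essentially the same route as the paper's proof: the symmetric sum collapses to the negative weighted jump--jump cross term (the paper computes this directly via integration by parts and the jump identity \eqref{eq:jump3}, while you obtain the identical expression by polarizing the energy identity of Lemma \ref{3.1}-(1), which is valid since that identity holds for arbitrary field arguments), and this cross term is then estimated with Young's inequality, the inverse inequality for the $G_2^n$ factor, and exactly the paper's stage-mismatch fix --- triangle inequality on the $v$-edge weight combined with $\|E_h^{n,1}-E_h^n\|_{0,\infty,\Omega_x}+\|B_h^{n,1}-B_h^n\|_{0,\infty,\Omega_x}\le C(h+\tau)$ from the $L^\infty$-Assumption --- which is precisely what generates the $C\left(1+\frac{\tau}{h}\right)\|\xi_f^n\|_{0,\Omega}^2$ term. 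The only organizational difference is that you apply a global weighted Cauchy--Schwarz before splitting the jumps of $G_1^n$, whereas the paper splits $[G_1^n]_v=[\xi_f^{n,1}]_v-[\xi_f^n]_v$ first and applies Young's inequality termwise; both orderings yield the stated bound.
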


With all the preparation in Lemmas \ref{lemma:3.5_7} - \ref{lemma:3.8}, we are now ready to estimate $\Xi_3$.

\begin{proposition}
\label{lemma:3.10}
\begin{eqnarray}
\label{eq:X4}
  \Xi_{3}
  &\leq&
  C \left( \tau (1+\frac{\tau}{h}) +
  \tau^2 (1+\frac{\tau}{h})^2 \right)
  ( \sum_{\sharp=0}^2 \ene^{n,\sharp} + h^{2k+2} )
  + C\tau^7 \nonumber\\
  &+&
  \left( -1 + C \frac{\tau}{h} + C \frac{\tau^2}{h^2} \right)
  ||G_2^n||_{0,\Omega}^2
  +
  \frac{\tau}{16} (\stab_f^{n} + \stab_f^{n,1}).
\end{eqnarray}
\end{proposition}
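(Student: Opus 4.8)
The plan is to work directly from the decomposition $\Xi_3 = ||G_2^n||_{0,\Omega}^2 + 3(G_1^n, G_3^n)_\Omega + 3(G_2^n, G_3^n)_\Omega + 3||G_3^n||_{0,\Omega}^2$ and to manufacture the advertised reserve term by the trivial splitting $||G_2^n||_{0,\Omega}^2 = -||G_2^n||_{0,\Omega}^2 + 2||G_2^n||_{0,\Omega}^2$. The spare $-||G_2^n||_{0,\Omega}^2$ is left untouched: it is the term that, under the CFL condition in Section \ref{sec:3.4}, will absorb the harmful $(C\frac{\tau}{h}+C\frac{\tau^2}{h^2})||G_2^n||_{0,\Omega}^2$. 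The remaining four pieces $2||G_2^n||_{0,\Omega}^2$, $3(G_1^n,G_3^n)_\Omega$, $3(G_2^n,G_3^n)_\Omega$, $3||G_3^n||_{0,\Omega}^2$ I will show are each $O(\tau)$ times admissible quantities. Throughout I will use the identities $(G_2^n, g)_\Omega = \frac{\tau}{2}\mathcal{K}_{RK}(g)$ and $(G_3^n, g)_\Omega = \frac{\tau}{3}\mathcal{L}_{RK}(g)$, together with $||G_i^n||_{0,\Omega}^2 \le C\sum_\sharp ||\xi_f^{n,\sharp}||_{0,\Omega}^2$ from \eqref{eq:G=f} to convert any surviving factor $||G_1^n||_{0,\Omega}$ or $||G_2^n||_{0,\Omega}$ into $(\sum_\sharp \ene^{n,\sharp}+h^{2k+2})^{1/2}$ \emph{without} spawning an extra uncontrolled $||G_2^n||_{0,\Omega}^2$.

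First I would bound $||G_3^n||_{0,\Omega}$. Writing $||G_3^n||_{0,\Omega}^2 = \frac{\tau}{3}\mathcal{L}_{RK}(G_3^n)$ and feeding $g=G_3^n$ into \eqref{eq:lem:3.9_2} gives the self-referential inequality $||G_3^n||_{0,\Omega}^2 \le \frac{C\tau}{3}\big((1+\frac{\tau}{h})\Sigma + \tau^3 + \frac{||G_2^n||_{0,\Omega}}{h}\big)||G_3^n||_{0,\Omega}$, where $\Sigma:=\sum_{\sharp=0}^{2}(||\xi_E^{n,\sharp}||_{0,\Omega_x}+||\xi_B^{n,\sharp}||_{0,\Omega_x}+||\xi_f^{n,\sharp}||_{0,\Omega}+h^{k+1})$. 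Dividing by $||G_3^n||_{0,\Omega}$ yields $||G_3^n||_{0,\Omega} \le C\tau(1+\frac{\tau}{h})\Sigma + C\tau^4 + C\frac{\tau}{h}||G_2^n||_{0,\Omega}$, and squaring (using $\Sigma^2 \le C\sum_\sharp(\ene^{n,\sharp}+h^{2k+2})$) bounds $3||G_3^n||_{0,\Omega}^2$ by $C\tau^2(1+\frac{\tau}{h})^2(\sum_\sharp\ene^{n,\sharp}+h^{2k+2}) + C\tau^8 + C\frac{\tau^2}{h^2}||G_2^n||_{0,\Omega}^2$. This already supplies the $\tau^2(1+\frac{\tau}{h})^2$ contribution and the $C\frac{\tau^2}{h^2}||G_2^n||_{0,\Omega}^2$ of the claim, with $C\tau^8 \le C\tau^7$ since $\tau\le 1$.

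The crux is the pairing of the three remaining terms. I treat $3(G_1^n,G_3^n)_\Omega = \tau\mathcal{L}_{RK}(G_1^n)$ and $3(G_2^n,G_3^n)_\Omega = \tau\mathcal{L}_{RK}(G_2^n)$ through \eqref{eq:lem:3.9_1}, \emph{retaining} the explicit terms $\tau\,a_h(G_2^n,E_h^{n,1},B_h^{n,1};G_1^n)$ and $\tau\,a_h(G_2^n,E_h^{n,1},B_h^{n,1};G_2^n)$. The latter is $\le 0$ by the stability identity underlying \eqref{eq:3.1.2} applied to $w=G_2^n$, so I discard it. For $2||G_2^n||_{0,\Omega}^2 = \tau\mathcal{K}_{RK}(G_2^n)$ I invoke \eqref{eq:lem:3.9_3}, which produces $\tau\,a_h(G_1^n,E_h^{n,1},B_h^{n,1};G_2^n)$. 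The decisive point is that this term and $\tau\,a_h(G_2^n,E_h^{n,1},B_h^{n,1};G_1^n)$ now carry the \emph{identical} coefficient $\tau$, so their sum is exactly what Lemma \ref{lemma:3.8} estimates, yielding $C\tau(1+\frac{\tau}{h})||\xi_f^n||_{0,\Omega}^2 + \frac{C\tau}{h}||G_2^n||_{0,\Omega}^2 + \frac{\tau}{16}(\stab_f^n+\stab_f^{n,1})$ — precisely the $C\frac{\tau}{h}||G_2^n||_{0,\Omega}^2$ and $\frac{\tau}{16}(\stab_f^n+\stab_f^{n,1})$ of the claim. (It is to force this coefficient match that I keep $2||G_2^n||_{0,\Omega}^2$ rather than $||G_2^n||_{0,\Omega}^2$.) The leftover scalar factors $\tau(1+\frac{\tau}{h})\Sigma\,||G_1^n||_{0,\Omega}$, $\tau(1+\frac{\tau}{h})\Sigma\,||G_2^n||_{0,\Omega}$ and the $\tau^4||G_i^n||_{0,\Omega}$ pieces I bound by \eqref{eq:G=f} and a weighted Young inequality into $C\tau(1+\frac{\tau}{h})(\sum_\sharp\ene^{n,\sharp}+h^{2k+2}) + C\tau^7$, estimating each product $\Sigma\,||G_i^n||_{0,\Omega}$ as a whole (both factors being $O((\sum_\sharp\ene^{n,\sharp}+h^{2k+2})^{1/2})$) so that no $O(1)$ multiple of $||G_2^n||_{0,\Omega}^2$ is created.

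Summing the four contributions and collecting terms then gives the stated estimate, the $||G_2^n||_{0,\Omega}^2$ coefficient assembling as $-1 + C\frac{\tau^2}{h^2} + C\frac{\tau}{h}$. I expect the main obstacle to be precisely the bookkeeping of this coefficient: every route that bounds a product containing $||G_2^n||_{0,\Omega}$ by a naive Young inequality risks depositing an $O(1)$ multiple of $||G_2^n||_{0,\Omega}^2$ that the single reserve $-||G_2^n||_{0,\Omega}^2$ cannot absorb. The heart of the argument is therefore to channel every such factor through one of the four safe mechanisms — the genuinely negative stability term $a_h(G_2^n,\cdot;G_2^n)\le0$, Lemma \ref{lemma:3.8} (which contributes only the admissible $\frac{\tau}{h}$ weight), the $G_3^n$ bound (which contributes $\frac{\tau^2}{h^2}$), or \eqref{eq:G=f} directly into energy quantities — and to verify that nothing but $\frac{\tau}{h}$ and $\frac{\tau^2}{h^2}$ multiples of $||G_2^n||_{0,\Omega}^2$ survive while the coefficient pairing needed for Lemma \ref{lemma:3.8} is exact.
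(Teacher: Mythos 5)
Your proposal is correct and follows essentially the same route as the paper's proof: the same splitting $\Xi_3 = -||G_2^n||_{0,\Omega}^2 + \tau\left(\mathcal{K}_{RK}(G_2^n) + \mathcal{L}_{RK}(G_1^n) + \mathcal{L}_{RK}(G_2^n)\right) + 3||G_3^n||_{0,\Omega}^2$, the same pairing of $\mathcal{K}_{RK}(G_2^n)$ with $\mathcal{L}_{RK}(G_1^n)$ so that Lemma \ref{lemma:3.8} applies to the two $a_h$ cross terms with matched coefficient $\tau$, and the same treatment of $3||G_3^n||_{0,\Omega}^2$ via \eqref{eq:lem:3.9_2} yielding the $C\frac{\tau^2}{h^2}||G_2^n||_{0,\Omega}^2$ contribution. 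The only (harmless) deviation is your handling of $\tau\mathcal{L}_{RK}(G_2^n)$, where you keep \eqref{eq:lem:3.9_1} and discard $a_h(G_2^n,E_h^{n,1},B_h^{n,1};G_2^n)\leq 0$ using the sign property from the proof of \eqref{eq:3.1.2}, whereas the paper simply invokes \eqref{eq:lem:3.9_2}, which deposits an extra $C\frac{\tau}{h}||G_2^n||_{0,\Omega}^2$ that the stated bound tolerates anyway.
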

\begin{proof}
First note that
\begin{eqnarray}
\Xi_3 & =&  - ||G_2^n||_{0,\Omega}^2 + 2(G_2^n,G_2^n)_{\Omega}+ 3(G_1^n,G_3^n)_{\Omega}+3(G_2^n,G_3^n)_{\Omega} + 3(G_3^n,G_3^n)_{\Omega}\notag\\
&=&- ||G_2^n||_{0,\Omega}^2 + \tau \left(\mathcal{K}_{RK}(G_2^n) + \mathcal{L}_{RK}(G_1^n) +
\mathcal{L}_{RK}(G_2^n)\right) +3(G_3^n,G_3^n)_{\Omega}.
\label{eq:lem3:10:1}
\end{eqnarray}
Based on \eqref{eq:lem:3.9_1} and \eqref{eq:lem:3.9_3},
\begin{eqnarray*}
&&\mathcal{K}_{RK}(G_2^n) + \mathcal{L}_{RK}(G_1^n) \\
\leq && a_h(G_2^n, E_h^{n,1}, B_h^{n,1}; G_1^n) +a_h(G_1^n, E_h^{n,1}, B_h^{n,1}; G_2^n)\\
&& +C\left((1+\frac{\tau}{h})\sum_{\sharp=0}^{2} (||\xi_E^{n,\sharp}||_{0,\Omega_x} + ||\xi_B^{n,\sharp}||_{0,\Omega_x} + ||\xi_f^{n,\sharp}||_{0,\Omega}+h^{k+1}) +\tau^3 \right) (||G_1^n||_{0,\Omega} +||G_2^n||_{0,\Omega}).
\end{eqnarray*}
We now apply Lemma \ref{lemma:3.8} to estimate the first two terms on the right, and apply Cauchy-Schwartz inequality and \eqref{eq:G=f} to estimate the last term, and get
\begin{eqnarray}
&&\mathcal{K}_{RK}(G_2^n) + \mathcal{L}_{RK}(G_1^n)\notag\\
\leq &&
 C \left( 1 +\frac{\tau}{h} \right) \left( \sum_{\sharp=0}^{2} \ene^{n,\sharp} + h^{2k+2} \right) + C \tau^6 +
  \frac{C}{h} ||G_2^n||_{0,\Omega}^2 +  \frac{1}{16} (\stab_f^n + \stab_f^{n,1}). \label{eq:lem3:10:2}
\end{eqnarray}

In order to estimate $\mathcal{L}_{RK}(G_2^n)$ in \eqref{eq:lem3:10:1}, we apply
\eqref{eq:lem:3.9_2} of Lemma~\ref{lemma:3.9}. In particular, take $g = G_2^n$ in \eqref{eq:lem:3.9_2} and use \eqref{eq:G=f}, we have
 \begin{eqnarray}
\mathcal{L}_{RK}(G_2^n)
  &\leq&
  C\left((1+\frac{\tau}{h})\sum_{\sharp=0}^{2} (||\xi_E^{n,\sharp}||_{0,\Omega_x} + ||\xi_B^{n,\sharp}||_{0,\Omega_x} + ||\xi_f^{n,\sharp}||_{0,\Omega} + h^{k+1})+\tau^3 + \frac{||G_2^n||_{0,\Omega}}{h} \right) ||G_2^n||_{0,\Omega}  \notag\\
  &\leq&
   C(1+\frac{\tau}{h})\left(\sum_{\sharp=0}^{2}\ene^{n,\sharp} + h^{2k+2}\right) +C\tau^6
   +\frac{C}{h}||G_2^n||_{0,\Omega}^2. \label{eq:lem3:10:3}
  \end{eqnarray}

For $3(G_3^n,G_3^n)_{\Omega}=\tau \mathcal{L}_{RK}(G_3^n)$, we take
$g = G_3^n$ in \eqref{eq:lem:3.9_2} and obtain
 \begin{align*}
  3||G_3^n||_{0,\Omega}^2
  &\leq
C\tau  \left((1+\frac{\tau}{h})\sum_{\sharp=0}^{2} (||\xi_E^{n,\sharp}||_{0,\Omega_x} + ||\xi_B^{n,\sharp}||_{0,\Omega_x} + ||\xi_f^{n,\sharp}||_{0,\Omega}+ h^{k+1})+\tau^3 + \frac{||G_2^n||_{0,\Omega}}{h} \right) ||G_3^n||_{0,\Omega}\\
  &\leq
C \tau^2 (1+\frac{\tau}{h})^2 \left( \sum_{\sharp = 0}^2 \ene^{n,\sharp} + h^{2k+2} \right)
     +C \tau^8+ C \frac{\tau^2}{h^2} ||G_2^n||_{0,\Omega}^2 +  ||G_3^n||_{0,\Omega}^2.
 \end{align*}
Therefore, with a different constant $C$ we have,
  \begin{equation}
 \label{eq:lem3:10:4}
     3(G_3^n, G_3^n)_{\Omega}
     \leq
     C \tau^2 (1+\frac{\tau}{h})^2 \left( \sum_{\sharp = 0}^2 \ene^{n,\sharp} + h^{2k+2} \right)
     +C \tau^8+ C \frac{\tau^2}{h^2} ||G_2^n||_{0,\Omega}^2.
  \end{equation}
  Finally, we complete the proof by combing \eqref{eq:lem3:10:1}-\eqref{eq:lem3:10:4}.
\end{proof}

Now we are ready to establish the main error estimate result for the Vlasov solver.
\begin{theorem}
\label{thm:vlas}
Let $(f, E, B)$ be a sufficiently smooth exact solution to equations \eqref{eq1:vm1}.
 Let $(f_h^n, E_h^n, B_h^n)\in\mG_h^k\times \mU_h^k\times \mU_h^k$ be the solution to the scheme \eqref{eq:1:2:3} at time $t^n$
  with $k \geq \left \lceil \frac{d_x}{2} \right \rceil$.
Under the $L^\infty$-Assumption,
 there exists a positive constant $\gamma_1$,
  such that  for any $\frac{\tau}{h} \leq \gamma_1$,
 the following estimate holds for $\forall n: n+1\leq T/\tau$
\begin{equation}
  3||\xi_f^{n+1}||_{0,\Omega}^2 - 3||\xi_f^{n}||_{0,\Omega}^2
  \leq
  C \tau h^{2k+1} + C\tau^7 +
  C\tau
   \sum_{\sharp=0}^2 \ene^{n,\sharp}.
\end{equation}
\end{theorem}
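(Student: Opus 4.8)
The plan is to assemble the three pieces that have already been prepared: the decomposition \eqref{eq:101}, namely $3\|\xi_f^{n+1}\|_{0,\Omega}^2 - 3\|\xi_f^{n}\|_{0,\Omega}^2 = \Xi_1 + \Xi_2 + \Xi_3$, together with the bounds on $\Xi_1,\Xi_2$ from Proposition \ref{3.4} and the bound on $\Xi_3$ from Proposition \ref{lemma:3.10}. First I would simply add the three inequalities. The $\tau^6$ and $\tau^8$ terms are all dominated by $C\tau^7$ (using $\tau\le 1$), and since $\|\xi_f^{n,\sharp}\|_{0,\Omega}^2 \le \ene^{n,\sharp}$, the leading error contributions collect into $C\tau h^{2k+1} + C\tau^7 + C\tau\sum_{\sharp=0}^2 \ene^{n,\sharp}$, provided the two remaining types of term, the $\stab_f$ terms and the $\|G_2^n\|_{0,\Omega}^2$ term, can be discarded as nonpositive.

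Next I would verify that the upwind stability terms combine favorably. Proposition \ref{3.4} contributes $-\frac{7}{16}\tau(\stab_f^n + \stab_f^{n,1} + 4\,\stab_f^{n,2})$ while Proposition \ref{lemma:3.10} contributes $+\frac{\tau}{16}(\stab_f^n + \stab_f^{n,1})$; since each $\stab_f^\star \ge 0$, their sum equals
\[
-\tfrac{6}{16}\tau(\stab_f^n + \stab_f^{n,1}) - \tfrac{7}{4}\tau\,\stab_f^{n,2} \le 0,
\]
so all spatial-stability terms may be dropped. This is precisely where the coefficient $-\frac{7}{16}$ deliberately built into the $\Xi_2$ estimate pays off, leaving a comfortable margin over the $+\frac{1}{16}$ produced when controlling $\Xi_3$.

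The crux, and the step I expect to be the main obstacle, is the treatment of the Runge--Kutta stability term $\bigl(-1 + C\frac{\tau}{h} + C\frac{\tau^2}{h^2}\bigr)\|G_2^n\|_{0,\Omega}^2$ in $\Xi_3$. The negative $-\|G_2^n\|_{0,\Omega}^2$ originates from the $\|2\xi_f^{n,2}-\xi_f^{n,1}-\xi_f^n\|_{0,\Omega}^2$ term in the energy identity \eqref{eq:errf} and is the temporal stability mechanism of the third order TVD Runge--Kutta scheme. The idea is to fix the CFL constant $\gamma_1$ small enough that whenever $\tau/h \le \gamma_1$ one has $C\frac{\tau}{h} + C\frac{\tau^2}{h^2} \le 1$, forcing the entire coefficient to be nonpositive so that this term too is discarded. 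Such a $\gamma_1$ exists because the constants $C$ in Proposition \ref{lemma:3.10} are independent of $h$ and $\tau$; this is exactly the point at which the CFL condition enters and $\gamma_1$ is determined.

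Finally, with both stability terms eliminated, I would clean up the surviving factors using the CFL bound: under $\tau/h \le \gamma_1$ the multipliers $(1+\frac{\tau}{h})$ and $(1+\frac{\tau}{h})^2$ are bounded by a constant, and $\tau^2 \le \tau$ gives $\tau^2(1+\frac{\tau}{h})^2 \le C\tau$, so the $\Xi_3$ contribution reduces to $C\tau\bigl(\sum_{\sharp=0}^2 \ene^{n,\sharp} + h^{2k+2}\bigr)$. Absorbing $h^{2k+2}\le h^{2k+1}$ (since $h\le 1$) and merging constants then yields the claimed bound. Throughout, the $L^\infty$-Assumption is the hypothesis that licenses the nonlinear-coupling estimates in Lemma \ref{3.1} and hence in Proposition \ref{lemma:3.10}, so the conclusion holds under that assumption as stated.
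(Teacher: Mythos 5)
Your proposal is correct and follows essentially the same route as the paper's own proof: combine \eqref{eq:101} with Propositions \ref{3.4} and \ref{lemma:3.10}, choose $\gamma_1$ so the CFL condition makes the coefficient of $\|G_2^n\|_{0,\Omega}^2$ nonpositive, verify that the $\stab_f$ terms sum to a nonpositive quantity (the paper writes this as $-\frac{\tau}{8}(3\stab_f^n + 3\stab_f^{n,1} + 14\stab_f^{n,2})$, matching your computation), and absorb the remaining factors under $\tau/h \le \gamma_1$ and $\tau, h \le 1$. The only cosmetic difference is that the paper insists on $-1 + C\frac{\tau}{h} + C\frac{\tau^2}{h^2} \le -\frac{1}{2}$ rather than $\le 0$, which changes nothing since the term is discarded anyway.
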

\begin{proof}
Since the constant $C$ in the result $\eqref{eq:X4}$ is independent of $n, h, \tau$,
there exists a positive constant $\gamma_1$ independent of $n, h, \tau$,
such that $-1 + C \frac{\tau}{h} + C \frac{\tau^2}{h^2} \leq -\frac{1}{2}$ as long as $\frac{\tau}{h}\leq \gamma_1$.
Under such condition, we combine \eqref{eq:101} and the estimates in Proposition \ref{3.4} and Proposition \ref{lemma:3.10} and get
\begin{eqnarray}
\label{eq:X6}
  & & 3||\xi_f^{n+1}||_{0,\Omega}^2 - 3||\xi_f^{n}||_{0,\Omega}^2
  = \Xi_{1} + \Xi_{2} +\Xi_3 \nonumber\\
  &\leq&
  C \tau h^{2k+1} + C\tau^7 +
  C\tau
   \sum_{\sharp=0}^2 \ene^{n,\sharp}
   -
   \frac{1}{2}||G_2^n||_{0,\Omega}^2
   -
   \frac{\tau}{8}\left(3\stab_f^n + 3\stab_f^{n,1} + 14 \stab_f^{n,2}\right)
  \nonumber\\
  &\leq&
  C \tau h^{2k+1} + C\tau^7 +
  C\tau
   \sum_{\sharp=0}^2  \ene^{n,\sharp}.\notag
\end{eqnarray}
\end{proof}

\subsection{The Maxwell equations part}
\label{sec:3.3}

In this section, we estimate how error accumulates in the Maxwell solver. The procedure is parallel to the Vlasov part in section \ref{sec:3.2}.
We start with a decomposition of the error change in one time step
$$\xi_E^{n+1} - \xi_E^{n} = X_1^n + X_2^n + X_3^n,\quad \xi_B^{n+1} - \xi_B^{n} = Z_1^n + Z_2^n + Z_3^n,$$
where $X_1^n = \xi_E^{n,1}-\xi_E^{n}$, $X_2^n = 2\xi_E^{n,2}-\xi_E^{n,1}-\xi_E^{n}$, $X_3^n = \xi_E^{n+1} - 2\xi_E^{n,2} + \xi_E^n$, $Z_1^n = \xi_B^{n,1}-\xi_B^{n}$, $Z_2^n = 2\xi_B^{n,2}-\xi_B^{n,1}-\xi_B^{n}$, and $Z_3^n = \xi_B^{n+1} - 2\xi_B^{n,2} + \xi_B^n$. It is obvious that
\begin{equation}
||X_i^n||_{0,\Omega}^2 \leq C\sum_{\sharp=0}^2 ||\xi_E^{n,\sharp}||_{0,\Omega_x}^2, ||Z_i^n||_{0,\Omega}^2 \leq C\sum_{\sharp=0}^2 ||\xi_B^{n,\sharp}||_{0,\Omega_x}^2,\quad i=1, 2, 3.\label{eq:XZ=f}
\end{equation}
Based on
equations \eqref{eq:err2}, the following hold for any $U, V \in \mathcal{U}_h^k$.
\begin{eqnarray*}
  (X_2^n, U)_{\Omega_x} + (Z_2^n, V)_{\Omega_x} &=& \frac{\tau}{2}(\mathcal{R}(U,V)-\mathcal{Q}(U,V))\equiv \frac{\tau}{2}\mathcal{R}_{RK}(U,V), \\
  (X_3^n, U)_{\Omega_x} + (Z_3^n, V)_{\Omega_x} &=& \frac{\tau}{3}(2\mathcal{S}(U,V)-\mathcal{R}(U,V)-\mathcal{Q}(U,V))\equiv \frac{\tau}{3}\mathcal{S}_{RK}(U,V).
\end{eqnarray*}
In addition, one can verify  based on \eqref{eq:errEB} that
\begin{equation}
 \label{eq:M1}
3(||\xi_E^{n+1}||_{0,\Omega_x}^2 + ||\xi_B^{n+1}||_{0,\Omega_x}^2)
 - 3(||\xi_E^{n}||_{0,\Omega_x}^2 + ||\xi_B^{n}||_{0,\Omega_x}^2)
 =\Theta_1+\Theta_2+\Theta_3,
 \end{equation}
 where
 \begin{eqnarray}
 \Theta_i=&&\tau\left(\mathcal{Q}_i(\xi_E^n, \xi_B^n) + \mathcal{R}_i(\xi_E^{n,1}, \xi_B^{n,1}) + 4\mathcal{S}_i(\xi_E^{n,2}, \xi_B^{n,2})\right),\quad i=1, 2 \label{eq:theta1}\\
 \Theta_3=&&(X_2^n, X_2^n)_{\Omega_x} + 3(X_1^n, X_3^n)_{\Omega_x} + 3(X_2^n, X_3^n)_{\Omega_x} + 3(X_3^n, X_3^n)_{\Omega_x}  \notag\\
 & &+ (Z_2^n, Z_2^n)_{\Omega_x} + 3(Z_1^n, Z_3^n)_{\Omega_x} + 3(Z_2^n, Z_3^n)_{\Omega_x} + 3(Z_3^n, Z_3^n)_{\Omega_x}.\label{eq:theta2}
\end{eqnarray}
In particular, $\Theta_2$ depends on the spatial discretization, while $\Theta_1$ and $\Theta_3$ characterizes the contribution from the time discretization.  Similarly as in the Vlasov part, there are two stability mechanisms, with one being $\stab_{EB}^\star$ ($\star$ can be $n$ or $n, \sharp$) from the spatial discretization, and the other is related to $||X_2^n||^2$, $||Z_2^n||^2$ arising from  the third order Runge-Kutta time discretization.

\bigskip
Next we will estimate $\Theta_1$ and $\Theta_2$. Some estimates for  the spatial discretizations of the Maxwell part are summarized in Lemma \ref{lemma:new4.1}.
 \begin{lemma}
   \label{lemma:new4.1}
  For $\sharp = 0,1,2$, we have
  \begin{subequations}
     \begin{align}
 (i)& \;\;\   b_h(\xi_E^{n,\sharp}, \xi_B^{n,\sharp}, \xi_f^{n,\sharp}; \xi_E^{n,\sharp}, \xi_B^{n,\sharp})
     \leq C(||\xi_f^{n,\sharp}||_{0,\Omega}^2 + ||\xi_E^{n,\sharp}||_{0,\Omega_x}^2) -
     \frac{1}{2} 
     \stab_{EB}^{n,\sharp},\\
 (ii)& \;\;\ |b_h(\eta_E^{n,\sharp}, \eta_B^{n,\sharp}, \eta_f^{n,\sharp}; \xi_E^{n,\sharp}, \xi_B^{n,\sharp})|
      \leq
     Ch^{2k+1} + C||\xi_E^{n,\sharp}||^2_{0,\mathcal{E}_x}+
     \frac{1}{16} 
     \stab_{EB}^{n,\sharp}.
     \end{align}
   \end{subequations}
 \end{lemma}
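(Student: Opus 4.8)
The plan is to substitute the prescribed arguments into the definition of $b_h$ and then reorganize the curl volume integrals together with the upwind flux edge integrals; the computation parallels the semi-discrete energy analysis in \cite{CGLM}. For part $(i)$, I take $E_h=U=\xi_E^{n,\sharp}$, $B_h=V=\xi_B^{n,\sharp}$ and $f_h=\xi_f^{n,\sharp}$. On each element I use the identity $\xi_B^{n,\sharp}\cdot\nabla\times\xi_E^{n,\sharp}-\xi_E^{n,\sharp}\cdot\nabla\times\xi_B^{n,\sharp}=\nabla\cdot(\xi_E^{n,\sharp}\times\xi_B^{n,\sharp})$ and integrate by parts, so that after summing over all $K_x\in T_h^x$ the two curl volume integrals collapse to $\int_{\mathcal{E}_x}[\xi_E^{n,\sharp}\times\xi_B^{n,\sharp}]_x\,ds_x$, the periodic-in-$x$ boundary contributions cancelling. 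By the vector jump identity \eqref{eq:jump2} this equals $\int_{\mathcal{E}_x}\big(\{\xi_E^{n,\sharp}\}_x\cdot[\xi_B^{n,\sharp}]_{tan}-\{\xi_B^{n,\sharp}\}_x\cdot[\xi_E^{n,\sharp}]_{tan}\big)\,ds_x$, which exactly cancels the average parts of the upwind flux edge integrals and leaves the purely dissipative contribution $-\tfrac12\stab_{EB}^{n,\sharp}$.

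It then remains to control the current term $-\int_{\Omega_x}J_{\xi_f^{n,\sharp}}\cdot\xi_E^{n,\sharp}\,dx$ with $J_{\xi_f^{n,\sharp}}=\int_{\Omega_v}\xi_f^{n,\sharp}v\,dv$. Because $\Omega_v$ is bounded, Cauchy--Schwarz in $v$ gives $\|J_{\xi_f^{n,\sharp}}\|_{0,\Omega_x}\le C\|\xi_f^{n,\sharp}\|_{0,\Omega}$; a further Cauchy--Schwarz in $x$ and Young's inequality then bound the current term by $C(\|\xi_f^{n,\sharp}\|_{0,\Omega}^2+\|\xi_E^{n,\sharp}\|_{0,\Omega_x}^2)$, which completes $(i)$.

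For part $(ii)$ I instead substitute $E_h=\eta_E^{n,\sharp}$, $B_h=\eta_B^{n,\sharp}$, $f_h=\eta_f^{n,\sharp}$ while keeping the test functions $\xi_E^{n,\sharp},\xi_B^{n,\sharp}$. The crucial point is that the two curl volume integrals now vanish: on each $K_x$ the fields $\nabla\times\xi_E^{n,\sharp}$ and $\nabla\times\xi_B^{n,\sharp}$ are vector polynomials of degree at most $k-1$, hence belong to $[P^k(K_x)]^3$, while the $L^2$ projection errors $\eta_E^{n,\sharp}$ and $\eta_B^{n,\sharp}$ are $L^2$-orthogonal to $\mU_h^k$ element by element. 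The edge integrals are treated by Young's inequality: the averages and tangential jumps of $\eta_E^{n,\sharp},\eta_B^{n,\sharp}$ are controlled through the trace approximation estimate in \eqref{eq:approx}, which gives $\|\eta_\diamond^{n,\sharp}\|_{0,\mathcal{E}_x}^2\le Ch^{2k+1}$ for $\diamond=E,B$, and the tangential jumps of $\xi_E^{n,\sharp},\xi_B^{n,\sharp}$ are absorbed, with the small prefactor, into $\tfrac1{16}\stab_{EB}^{n,\sharp}$. The current term $-\int_{\Omega_x}J_{\eta_f^{n,\sharp}}\cdot\xi_E^{n,\sharp}\,dx$ is handled exactly as in $(i)$, using $\|J_{\eta_f^{n,\sharp}}\|_{0,\Omega_x}\le C\|\eta_f^{n,\sharp}\|_{0,\Omega}\le Ch^{k+1}$ from \eqref{eq:approx}; Young's inequality then produces $Ch^{2k+2}\le Ch^{2k+1}$ together with the $\xi_E^{n,\sharp}$ norm retained on the right-hand side.

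The main obstacle is the algebraic bookkeeping in $(i)$: one must correctly match the element-boundary traces produced by integrating the curl terms by parts against the average parts of the upwind Maxwell fluxes through \eqref{eq:jump2}, and check that every average term cancels so that precisely $-\tfrac12\stab_{EB}^{n,\sharp}$ survives. The degree count underlying the vanishing of the volume terms in $(ii)$, namely that $\nabla\times$ lowers the polynomial degree by one, is the other step needing care. Both reduce to the Maxwell energy identities already established for the semi-discrete scheme in \cite{CGLM}, so no genuinely new difficulty arises beyond careful accounting.
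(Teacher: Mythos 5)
Your proposal is correct and follows essentially the same route as the paper's own proof: for part $(i)$, the divergence theorem plus the jump identity \eqref{eq:jump2} to reduce the curl and flux terms to $-\tfrac12\stab_{EB}^{n,\sharp}$ with the current term bounded by Cauchy--Schwarz and Young, and for part $(ii)$, the elementwise $L^2$-orthogonality of $\eta_E^{n,\sharp},\eta_B^{n,\sharp}$ against $\nabla\times\xi_B^{n,\sharp},\nabla\times\xi_E^{n,\sharp}\in\mU_h^k$ to kill the volume terms, with the trace estimates in \eqref{eq:approx} and Young's inequality handling the edge and current terms. Both your degree-count argument and your bookkeeping of the flux cancellation match the paper's computation, so no gap remains.
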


\begin{proposition}
\label{prop:Theta12}
The following estimates hold for $\Theta_1$ and $\Theta_2$.
\begin{subequations}
 \begin{eqnarray}
 && \Theta_1 \leq C\tau( h^{2k+2}+\tau^6) + C\tau  \sum_{\sharp=0}^{2} \ene^{n,\sharp}\\
 && \Theta_2 \leq C\tau h^{2k+1}+ C\tau  \sum_{\sharp=0}^{2} \ene^{n,\sharp}
  -\frac{7}{16}\tau (\stab_{EB}^{n}+\stab_{EB}^{n,1}+4\stab_{EB}^{n,2}).
 \end{eqnarray}
 \end{subequations}
\end{proposition}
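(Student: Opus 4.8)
The plan is to follow the proof of Proposition~\ref{3.4} for the Vlasov part essentially verbatim, using the decomposition $\mathcal{Q}=\mathcal{Q}_1+\mathcal{Q}_2$ (and likewise for $\mathcal{R},\mathcal{S}$), where the subscript $1$ collects the projection/truncation residuals and the subscript $2$ collects the $b_h$ terms, so that $\Theta_i=\tau(\mathcal{Q}_i+\mathcal{R}_i+4\mathcal{S}_i)$ for $i=1,2$. The one structural simplification relative to the Vlasov case is that $b_h$ is linear in each of its arguments; consequently no $L^\infty$-Assumption and no analogue of Lemma~\ref{3.1}-(3) will be needed, and the $b_h$ terms split into just two pieces instead of three.

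For $\Theta_1$ I would, by similarity, estimate only the representative $\mathcal{S}_1(\xi_E^{n,2},\xi_B^{n,2})$. Each numerator appearing in $\mathcal{Q}_1,\mathcal{R}_1,\mathcal{S}_1$ is a linear combination of $\eta_E$ (resp. $\eta_B$) whose coefficients sum to zero --- for $\mathcal{S}_1$ it is $3\eta_E^{n+1}-\eta_E^n-2\eta_E^{n,2}$ --- so Lemma~\ref{lemma:2}-(1) bounds it by $C\tau h^{k+1}$, while Lemma~\ref{lemma:1} bounds the truncation pieces $T_E^n,T_B^n$ by $C\tau^4$. Applying Cauchy--Schwarz, dividing by $2\tau$, and using Young's inequality then gives $\mathcal{S}_1\le C(h^{2k+2}+\tau^6)+C\ene^{n,2}$; the terms $\mathcal{Q}_1$ and $\mathcal{R}_1$ are handled identically but carry no truncation contribution. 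Summing with weights $(1,1,4)$ and the prefactor $\tau$ yields the claimed bound for $\Theta_1$.

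For $\Theta_2$ I would treat the representative $\mathcal{Q}_2(\xi_E^n,\xi_B^n)=b_h(e_E^n,e_B^n,e_f^n;\xi_E^n,\xi_B^n)$. Invoking the joint linearity of $b_h$ in $(E_h,B_h,f_h)$ together with $e_\diamond^n=\xi_\diamond^n-\eta_\diamond^n$, I write
\[
b_h(e_E^n,e_B^n,e_f^n;\xi_E^n,\xi_B^n)=b_h(\xi_E^n,\xi_B^n,\xi_f^n;\xi_E^n,\xi_B^n)-b_h(\eta_E^n,\eta_B^n,\eta_f^n;\xi_E^n,\xi_B^n).
\]
The first term is bounded by Lemma~\ref{lemma:new4.1}-(i), which supplies the dissipation $-\frac{1}{2}\stab_{EB}^n$ together with $C(\|\xi_f^n\|_{0,\Omega}^2+\|\xi_E^n\|_{0,\Omega_x}^2)$; the second is bounded by Lemma~\ref{lemma:new4.1}-(ii), contributing $Ch^{2k+1}$, a small multiple $\frac{1}{16}\stab_{EB}^n$ of the dissipation, and a boundary remainder. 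The two dissipative constants combine as $-\frac{1}{2}+\frac{1}{16}=-\frac{7}{16}$, which is precisely the coefficient in the statement. Repeating this for $\mathcal{R}_2,\mathcal{S}_2$, summing with weights $(1,1,4)$, and multiplying by $\tau$ completes the estimate.

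I expect the only non-routine point to be the reconciliation of the boundary remainder from Lemma~\ref{lemma:new4.1}-(ii) with the clean volume-energy form of the bound. The genuine mechanism --- that the curl volume pairings $\int_{\Omega_x}\eta_B^{n,\sharp}\cdot\nabla\times\xi_E^{n,\sharp}$ vanish by $L^2$-orthogonality of the projection error against $\nabla\times\xi_E^{n,\sharp}\in[P^{k-1}]^3$, that the current pairing $\int_{\Omega_x}(\int_{\Omega_v}\eta_f^{n,\sharp}v\,dv)\cdot\xi_E^{n,\sharp}$ reduces to a volume term of order $Ch^{2k+2}+C\ene^{n,\sharp}$, and that the surface pairings of the $O(h^{k+1/2})$ projection error against the viscous jumps $[\xi_E]_{tan},[\xi_B]_{tan}$ are absorbed by Young into $Ch^{2k+1}$ and a fraction of $\stab_{EB}^{n,\sharp}$ --- is already packaged inside Lemma~\ref{lemma:new4.1}. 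With that lemma in hand, the remaining work is exactly the constant-tracking bookkeeping described above, and the half-order rate $h^{2k+1}$ is inherited directly from its right-hand side.
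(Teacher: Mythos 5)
Your proposal is correct and follows essentially the same route as the paper: the same decomposition $\Theta_i=\tau\left(\mathcal{Q}_i+\mathcal{R}_i+4\mathcal{S}_i\right)$, the same choice of representative terms ($\mathcal{S}_1$ for $\Theta_1$, $\mathcal{Q}_2$ for $\Theta_2$), the same use of Lemma \ref{lemma:1}, Lemma \ref{lemma:2}-(1) and Lemma \ref{lemma:new4.1}, the same linearity-based splitting $b_h(e_E^n,e_B^n,e_f^n;\xi_E^n,\xi_B^n)=b_h(\xi_E^n,\xi_B^n,\xi_f^n;\xi_E^n,\xi_B^n)-b_h(\eta_E^n,\eta_B^n,\eta_f^n;\xi_E^n,\xi_B^n)$, and the same bookkeeping $-\tfrac{1}{2}+\tfrac{1}{16}=-\tfrac{7}{16}$. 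The ``boundary remainder'' you flag as the only non-routine point is indeed harmless, exactly for the reason you give: the proof of Lemma \ref{lemma:new4.1}-(ii) actually produces a volume term $C\|\xi_E^{n,\sharp}\|_{0,\Omega_x}^2$ (absorbed into $C\,\ene^{n,\sharp}$), which is how the paper's proof of the proposition uses it as well.
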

\begin{proof}
 Recall that $\Theta_1=\tau\left(\mathcal{Q}_1(\xi_E^n, \xi_B^n) + \mathcal{R}_1(\xi_E^{n,1}, \xi_B^{n,1}) + 4\mathcal{S}_1(\xi_E^{n,2}, \xi_B^{n,2})\right)$, with similarity, we only estimate $\mathcal{S}_1(\xi_E^{n,2}, \xi_B^{n,2})$. Applying Cauchy-Schwarz inequality, Lemma \ref{lemma:1}, Lemma \ref{lemma:2} - (1), one gets
 \begin{eqnarray*}
 \mathcal{S}_1(\xi_E^{n,2}, \xi_B^{n,2})
& = &\left(\frac{3\eta_E^{n+1}-\eta_E^n-2\eta_E^{n,2}+3T_E^n(x)}{2\tau} , \xi_E^{n,2}\right)_{\Omega_x}
  + \left(\frac{3\eta_B^{n+1}-\eta_B^n-2\eta_B^{n,2}+3T_B^n(x)}{2\tau}, \xi_B^{n,2}\right)_{\Omega_x}\\
&\leq &
 C( ||\xi_E^{n,2}||_{0,\Omega_x}^2 + ||\xi_B^{n,2}||_{0,\Omega_x}^2 + h^{2k+2} +\tau^6)
 \leq C(\ene^{n,2} + h^{2k+2} +\tau^6).
  \end{eqnarray*}

To bound $\Theta_2=\tau\left(\mathcal{Q}_2(\xi_E^n, \xi_B^n) + \mathcal{R}_2(\xi_E^{n,1}, \xi_B^{n,1}) + 4\mathcal{S}_2(\xi_E^{n,2}, \xi_B^{n,2})\right)$, with similarity, we only need to look at  $\mathcal{Q}_2(\xi_E^n, \xi_B^n)$. By definition, and Lemma \ref{lemma:new4.1},
\begin{align*}
 \mathcal{Q}_2(\xi_E^n, \xi_B^n)
&= b_h(e_E^n, e_B^n, e_f^n; \xi_E^n, \xi_B^n)
 = b_h(\xi_E^n, \xi_B^n, \xi_f^n; \xi_E^n, \xi_B^n)
  -b_h(\eta_E^n, \eta_B^n, \eta_f^n; \xi_E^n, \xi_B^n)\\
 &\leq
 Ch^{2k+1} + C(||\xi_f^n||_{0,\Omega}^2 + ||\xi_E^n||_{0,\Omega_x}^2) -
 \frac{7}{16} \stab_{EB}^n\\
  &\leq
 Ch^{2k+1} + C \ene^n - \frac{7}{16} \stab_{EB}^n.
\end{align*}
\end{proof}

\bigskip
Next we will estimate $\Theta_3$. One key is to use $-||X_2^n||_{0,\Omega_x}^2-||Z_2^n||_{0,\Omega_x}^2$ to control $C\frac{\tau}{h}(||X_2^n||_{0,\Omega_x}^2+||Z_2^n||_{0,\Omega_x}^2)$. To make the analysis easy to follow, we first present
some preparatory results in two lemmas.

\begin{lemma}
 \label{lemma:4.4}
For any $U, V \in \mU_h^k$, we have
\begin{subequations}
  \begin{align}
    \mathcal{S}_{RK}(U, V)
   &\leq
      C(h^{k+1} + \tau h^k+ \tau^3)(||U||_{0,\Omega_x} + ||V||_{0,\Omega_x})
   + b_h(X_2^n, Z_2^n, G_2^n; U, V)\label{eq:lem4:4_1}\\
   &\leq
   C(h^{k+1} + \tau h^k+ \tau^3+ \frac{1}{h} (||Z_2^n||_{0,\Omega_x} + ||X_2^n||_{0,\Omega_x}))(||U||_{0,\Omega_x} + ||V||_{0,\Omega_x})\notag   \\
   &  + C ||G_2^n||_{0,\Omega} ||U||_{0,\Omega_x}.\label{eq:lem4:4_2}\\
\mathcal{R}_{RK}(U, V)
&\leq
  C(h^{k+1} + \tau h^k)(||U||_{0,\Omega_x} + ||V||_{0,\Omega_x})
   + b_h(X_1^n, Z_1^n, G_1^n; U, V).\label{eq:lem4:4_3}
 \end{align}
 \end{subequations}
 \end{lemma}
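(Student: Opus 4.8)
The plan is to expand the Runge--Kutta combinations $\mathcal{S}_{RK} = 2\mathcal{S} - \mathcal{R} - \mathcal{Q}$ and $\mathcal{R}_{RK} = \mathcal{R} - \mathcal{Q}$ into their constituent pieces, and in each to separate the contributions of the projection/truncation terms (the $\eta$ and $T$ parts) from those of the bilinear form $b_h$. The structural observation I would exploit is that $b_h(E_h, B_h, f_h; U, V)$ is a sum of terms each depending linearly on exactly one of $E_h$, $B_h$, $f_h$, so $b_h$ is \emph{jointly} additive in the triple $(E_h, B_h, f_h)$. Consequently, using $e_\diamond^{n,\sharp} = \xi_\diamond^{n,\sharp} - \eta_\diamond^{n,\sharp}$, the three $b_h$-contributions to $\mathcal{S}_{RK}$ collapse into a single form evaluated at the projected-error triple $(X_2^n, Z_2^n, G_2^n)$ minus a remainder evaluated at a combination of $\eta$'s whose coefficients sum to zero.

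Concretely, for the $\eta$ and $T$ part of $\mathcal{S}_{RK}$ I would collect the coefficients of each $\eta_E, \eta_B, T_E, T_B$ against $U$ (resp. $V$): this yields $\tfrac{1}{\tau} d_E^n$ with $d_E^n = 3\eta_E^n - 6\eta_E^{n,2} + 3\eta_E^{n+1}$ (coefficients summing to zero) plus $\tfrac{3}{\tau}T_E^n$. By Lemma~\ref{lemma:2}(1), $\tfrac{1}{\tau}||d_E^n||_{0,\Omega_x} \le C h^{k+1}$, and by Lemma~\ref{lemma:1}, $\tfrac{3}{\tau}||T_E^n||_{0,\Omega_x}\le C\tau^3$; a Cauchy--Schwarz step then bounds this part by $C(h^{k+1}+\tau^3)(||U||_{0,\Omega_x}+||V||_{0,\Omega_x})$. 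For the $b_h$ part, joint linearity gives $2b_h(e^{n,2}) - b_h(e^{n,1}) - b_h(e^n) = b_h(X_2^n, Z_2^n, G_2^n; U, V) - b_h(\hat d_E, \hat d_B, \hat d_f; U, V)$, where $\hat d_\diamond = 2\eta_\diamond^{n,2} - \eta_\diamond^{n,1} - \eta_\diamond^n$ again has vanishing coefficient sum. The remainder I would estimate directly: using the inverse inequalities \eqref{eq:inv} on $\nabla\times U$ and on the traces, together with $||\hat d_\diamond||_{0,\Omega_x} + h^{1/2}||\hat d_\diamond||_{0,\mathcal{E}_x} \le C\tau h^{k+1}$ from Lemma~\ref{lemma:2}(1) (and the corresponding bound on $||\hat d_f||_{0,\Omega}$ controlling the current term), each piece loses exactly one power of $h$, producing $C\tau h^k(||U||_{0,\Omega_x}+||V||_{0,\Omega_x})$. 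Adding the three bounds gives \eqref{eq:lem4:4_1}. The estimate \eqref{eq:lem4:4_3} for $\mathcal{R}_{RK}$ is obtained in exactly the same way, the only differences being that no $T$-term appears (hence no $\tau^3$) and that the surviving bilinear form is evaluated at $(X_1^n, Z_1^n, G_1^n)$.

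To pass from \eqref{eq:lem4:4_1} to the sharper \eqref{eq:lem4:4_2}, I would bound the retained term $b_h(X_2^n, Z_2^n, G_2^n; U, V)$ termwise. The curl volume integrals and the edge integrals pair $X_2^n$, $Z_2^n$ against $\nabla\times U$, $\nabla\times V$ and against the jumps of $U, V$; applying \eqref{eq:inv} to move the derivative/trace onto the test functions yields $\tfrac{C}{h}(||X_2^n||_{0,\Omega_x}+||Z_2^n||_{0,\Omega_x})(||U||_{0,\Omega_x}+||V||_{0,\Omega_x})$. The remaining current term $-\int_{\Omega_x} J_{G_2^n}\cdot U\,dx$, with $J_{G_2^n} = \int_{\Omega_v} G_2^n v\,dv$, is where the Maxwell solver couples back to the Vlasov variable: since $||J_{G_2^n}||_{0,\Omega_x}\le C||G_2^n||_{0,\Omega}$ and this term pairs only with $U$, it contributes $C||G_2^n||_{0,\Omega}||U||_{0,\Omega_x}$ with no inverse power of $h$. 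Combining gives \eqref{eq:lem4:4_2}.

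I expect the main obstacle to be bookkeeping rather than analysis: one must verify that in every combination the $\eta$-coefficients sum to zero, so that the gained factor $\tau$ in Lemma~\ref{lemma:2}(1) cancels the $1/\tau$ from the Runge--Kutta weights, and one must carefully track which test function ($U$ versus $V$) each term pairs with---in particular that the current term attaches only to $U$ and carries the full phase-space norm of $G_2^n$. This coupling is precisely what Lemma~\ref{lemma:4.4} is designed to expose for the later control of $\Theta_3$.
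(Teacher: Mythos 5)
Your proposal is correct and follows essentially the same route as the paper's proof: expand $\mathcal{S}_{RK}=2\mathcal{S}-\mathcal{R}-\mathcal{Q}$ (resp. $\mathcal{R}_{RK}=\mathcal{R}-\mathcal{Q}$), bound the $\eta$/$T$ parts by Lemma~\ref{lemma:1} and Lemma~\ref{lemma:2}-(1), use the joint linearity of $b_h$ to split off $b_h(X_2^n,Z_2^n,G_2^n;U,V)$ minus a zero-coefficient-sum $\eta$-remainder (which the paper bounds by citing Lemma~\ref{lemma:2}-(3) rather than re-deriving it inline as you do), and then estimate the retained $b_h$ term with inverse inequalities, keeping the current term paired only with $U$ and free of any $1/h$ factor. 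The only cosmetic difference is inside that remainder bound: the paper's Lemma~\ref{lemma:2}-(3) makes the curl volume terms vanish exactly by $L^2$-orthogonality of the projection errors, whereas you apply the inverse inequality to $\nabla\times U$; both yield the same $C\tau h^k$ contribution.
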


\begin{lemma}
\label{lemma:new4.2}
\begin{eqnarray*}
  && b_h( X_1^{n}, Z_1^{n}, G_1^{n}; X_2^n, Z_2^n) +
   b_h( X_2^{n}, Z_2^{n}, G_2^{n}; X_1^n, Z_1^n)\nonumber\\
   &&  \leq
  \frac{C}{h}(||X_2^n||_{0,\Omega_x}^2 + ||Z_2^n||_{0,\Omega_x}^2)        
  + C\sum_{j=1}^2 (||G_j^n||_{0,\Omega}^2 + ||X_j^n||_{0,\Omega_x}^2)+ \frac{1}{16} ( \stab_{EB}^{n} + \stab_{EB}^{n,1} ).
\end{eqnarray*}
\end{lemma}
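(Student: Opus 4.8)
The plan is to follow the Vlasov counterpart Lemma~\ref{lemma:3.8}, exploiting that $b_h$ splits into a ``field'' part built from the curl volume integrals and the tangential edge terms, and a ``current'' part. I would write $b_h(E_h,B_h,f_h;U,V)=\widetilde b_h(E_h,B_h;U,V)-(J_h,U)_{\Omega_x}$ with $J_h=\int_{\Omega_v}f_h v\,dv$, so that the quantity to bound is $\widetilde b_h(X_1^n,Z_1^n;X_2^n,Z_2^n)+\widetilde b_h(X_2^n,Z_2^n;X_1^n,Z_1^n)$ minus the current contributions $(J(G_1^n),X_2^n)_{\Omega_x}+(J(G_2^n),X_1^n)_{\Omega_x}$, where $J(G_j^n)=\int_{\Omega_v}G_j^n v\,dv$. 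The current terms are disposed of first: since $\Omega_v$ is bounded, Cauchy--Schwarz in $v$ gives $\|J(G_j^n)\|_{0,\Omega_x}\le C\|G_j^n\|_{0,\Omega}$, and then Young's inequality produces $C\sum_{j=1}^2(\|G_j^n\|_{0,\Omega}^2+\|X_j^n\|_{0,\Omega_x}^2)$, precisely the middle group of terms in the target bound.

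The heart of the argument is the symmetric field part. The key algebraic fact, the exact analog of \eqref{eq:3.1.2} that underlies Lemma~\ref{lemma:new4.1}-(i), is that elementwise integration by parts of the curl terms, combined with the jump identity \eqref{eq:jump2} and the periodicity in $x$, yields the diagonal evaluation $\widetilde b_h(E,B;E,B)=-\tfrac12\int_{\mathcal{E}_x}(|[E]_{tan}|^2+|[B]_{tan}|^2)\,ds_x$. Because this is a genuine quadratic form $Q(E,B)$ in its arguments, the polarization identity $Q(A_1+A_2)-Q(A_1)-Q(A_2)=\widetilde b_h(A_1;A_2)+\widetilde b_h(A_2;A_1)$, valid for any bilinear pairing, collapses the symmetric cross term to the purely edge expression
\begin{equation*}
\widetilde b_h(X_1^n,Z_1^n;X_2^n,Z_2^n)+\widetilde b_h(X_2^n,Z_2^n;X_1^n,Z_1^n)=-\int_{\mathcal{E}_x}\left([X_1^n]_{tan}\cdot[X_2^n]_{tan}+[Z_1^n]_{tan}\cdot[Z_2^n]_{tan}\right)ds_x.
\end{equation*}

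Finally I would split this edge integral by Young's inequality, keeping the stage-$0$/stage-$1$ jumps and the stage-$2$ jumps on opposite sides. Since $X_1^n=\xi_E^{n,1}-\xi_E^n$ and $Z_1^n=\xi_B^{n,1}-\xi_B^n$, one has $\int_{\mathcal{E}_x}(|[X_1^n]_{tan}|^2+|[Z_1^n]_{tan}|^2)\,ds_x\le C(\stab_{EB}^n+\stab_{EB}^{n,1})$, so choosing the Young parameter small enough absorbs these factors into $\tfrac1{16}(\stab_{EB}^n+\stab_{EB}^{n,1})$. The conjugate factors $[X_2^n]_{tan},[Z_2^n]_{tan}$ cannot be absorbed into stability, since no $\stab_{EB}^{n,2}$ appears in the target bound; instead I would control them by the inverse trace inequality $\|U\|_{0,\partial K_x}\le Ch_{K_x}^{-1/2}\|U\|_{0,K_x}$ from \eqref{eq:inv} together with $|U\times n_x|\le|U|$, giving $\int_{\mathcal{E}_x}(|[X_2^n]_{tan}|^2+|[Z_2^n]_{tan}|^2)\,ds_x\le \tfrac{C}{h}(\|X_2^n\|_{0,\Omega_x}^2+\|Z_2^n\|_{0,\Omega_x}^2)$. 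Combining this with the current estimate yields the stated inequality.

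The main obstacle is the exact diagonal identity for $\widetilde b_h$: one must verify that the curl volume integrals cancel completely against the averages in the tangential edge terms, leaving only the dissipative jump squares. This cancellation relies on \eqref{eq:jump2} and is the field counterpart of the stability computation behind Lemma~\ref{lemma:new4.1}-(i). A secondary subtlety is the deliberately asymmetric handling of the two jump factors: the stage-$2$ factor must trade in a power $h^{-1}$ through the inverse inequality, which is exactly the origin of the $\tfrac{C}{h}(\|X_2^n\|_{0,\Omega_x}^2+\|Z_2^n\|_{0,\Omega_x}^2)$ term, the temporal-stability analog of the $\tfrac{C}{h}\|G_2^n\|_{0,\Omega}^2$ term in Lemma~\ref{lemma:3.8}.
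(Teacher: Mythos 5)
Your proposal is correct and follows essentially the same route as the paper: split off the current terms via Cauchy--Schwarz and Young's inequality, reduce the symmetric field part to $-\int_{\mathcal{E}_x}\left([X_1^n]_{tan}\cdot[X_2^n]_{tan}+[Z_1^n]_{tan}\cdot[Z_2^n]_{tan}\right)ds_x$, then finish with asymmetric Young weights, the splitting $|[X_1^n]_{tan}|^2\le 2\left(|[\xi_E^{n}]_{tan}|^2+|[\xi_E^{n,1}]_{tan}|^2\right)$ absorbed into $\frac{1}{16}(\stab_{EB}^{n}+\stab_{EB}^{n,1})$, and the inverse trace inequality for the stage-$2$ jumps. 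The only (cosmetic) difference is that you obtain the cross-term identity by polarizing the diagonal identity underlying Lemma \ref{lemma:new4.1}-$(i)$, whereas the paper recomputes it directly from the definition using the divergence theorem and \eqref{eq:jump2}; both rest on exactly the same cancellation.
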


Now we get ready to estimate $\Theta_3$.
\begin{proposition}
\label{prop:Theta3}
\begin{eqnarray}
\Theta_3
  &\leq&
   C\tau( h^{2k+2} + \tau^2 h^{2k} + \tau^6)
  +\left( -1 + C\frac{\tau}{h} + C\frac{\tau^2}{h^2}\right) (||X_2^n||_{0,\Omega_x}^2 + ||Z_2^n||_{0,\Omega_x}^2)\notag\\
  &&+ \frac{\tau}{16}\left( \stab_{EB}^{n} + \stab_{EB}^{n,1} \right)+C \tau  \sum_{\sharp=0}^{2} \ene^{n,\sharp}.\label{eq:theta3:0}
\end{eqnarray}
\end{proposition}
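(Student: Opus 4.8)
The plan is to mirror exactly the structure of Proposition~\ref{lemma:3.10} for the Vlasov part, since $\Theta_3$ has the same algebraic form as $\Xi_3$ but with the pairs $(X_i^n, Z_i^n)$ replacing $G_i^n$ and the bilinear form $b_h$ replacing $a_h$. First I would rewrite $\Theta_3$ using the identity $(X_2^n,X_2^n)_{\Omega_x} = -\|X_2^n\|_{0,\Omega_x}^2 + 2\|X_2^n\|_{0,\Omega_x}^2$ (and likewise for $Z_2^n$) to extract the crucial negative terms $-\|X_2^n\|_{0,\Omega_x}^2 - \|Z_2^n\|_{0,\Omega_x}^2$, then express the remaining cross terms through the Runge--Kutta functionals. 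Specifically, using the relations $(X_2^n,U)_{\Omega_x}+(Z_2^n,V)_{\Omega_x} = \frac{\tau}{2}\mathcal{R}_{RK}(U,V)$ and $(X_3^n,U)_{\Omega_x}+(Z_3^n,V)_{\Omega_x} = \frac{\tau}{3}\mathcal{S}_{RK}(U,V)$, I would convert $\Theta_3$ into
\begin{equation*}
\Theta_3 = -\|X_2^n\|_{0,\Omega_x}^2 - \|Z_2^n\|_{0,\Omega_x}^2 + \tau\left(\mathcal{R}_{RK}(X_2^n,Z_2^n) + \mathcal{S}_{RK}(X_1^n,Z_1^n) + \mathcal{S}_{RK}(X_2^n,Z_2^n)\right) + 3\left(\|X_3^n\|_{0,\Omega_x}^2 + \|Z_3^n\|_{0,\Omega_x}^2\right),
\end{equation*}
in direct analogy with \eqref{eq:lem3:10:1}.

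Next I would estimate each group of RK-functional terms. For $\mathcal{R}_{RK}(X_2^n,Z_2^n) + \mathcal{S}_{RK}(X_1^n,Z_1^n)$, I would apply \eqref{eq:lem4:4_1} and \eqref{eq:lem4:4_3} of Lemma~\ref{lemma:4.4} to peel off the leading $b_h$ terms $b_h(X_2^n,Z_2^n,G_2^n;X_1^n,Z_1^n) + b_h(X_1^n,Z_1^n,G_1^n;X_2^n,Z_2^n)$, which are controlled by Lemma~\ref{lemma:new4.2}, while the remaining factors $(h^{k+1}+\tau h^k+\tau^3)(\|X_i^n\|+\|Z_i^n\|)$ get absorbed via Cauchy--Schwarz and \eqref{eq:XZ=f} into $C\tau(h^{2k+2}+\tau^2 h^{2k}+\tau^6) + C\sum_{\sharp}\mathbf{N}^{n,\sharp}$. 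For the $\mathcal{S}_{RK}(X_2^n,Z_2^n)$ term I would use the sharper bound \eqref{eq:lem4:4_2} with $(U,V)=(X_2^n,Z_2^n)$, which produces the self-interaction $\frac{1}{h}(\|X_2^n\|_{0,\Omega_x}^2 + \|Z_2^n\|_{0,\Omega_x}^2)$ plus a $\|G_2^n\|_{0,\Omega}\|X_2^n\|_{0,\Omega_x}$ term. Finally, the $\|X_3^n\|^2 + \|Z_3^n\|^2$ terms are bounded exactly as in \eqref{eq:lem3:10:4}: take $(U,V)=(X_3^n,Z_3^n)$ in $\frac{\tau}{3}\mathcal{S}_{RK}$, invoke \eqref{eq:lem4:4_2}, and use Young's inequality to absorb one power of $\|X_3^n\|^2+\|Z_3^n\|^2$ on the left, leaving a factor $\tau^2(1+\tau/h)^2$ in front of the energies and $\frac{\tau^2}{h^2}$ in front of $\|X_2^n\|^2+\|Z_2^n\|^2$.

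The one genuinely new wrinkle compared to the Vlasov case is the appearance of the cross-coupling term $C\|G_2^n\|_{0,\Omega}\|X_2^n\|_{0,\Omega_x}$ coming from the current-density contribution $-\int J_h U\,dx$ in $b_h$ (this is the source argument $G_2^n$). Since $\|G_2^n\|_{0,\Omega}$ is a \emph{Vlasov}-side quantity, I cannot fold it into the Maxwell energies directly; instead I would apply Young's inequality to split it as $C\|G_2^n\|_{0,\Omega}^2 + \epsilon\|X_2^n\|_{0,\Omega_x}^2$ and then bound $\|G_2^n\|_{0,\Omega}^2 \leq C\sum_{\sharp}\|\xi_f^{n,\sharp}\|_{0,\Omega}^2 \leq C\sum_{\sharp}\mathbf{N}^{n,\sharp}$ via \eqref{eq:G=f}. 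This is precisely what makes $\Theta_3$'s bound involve the \emph{full} energy $\mathbf{N}^{n,\sharp}$ (including the Vlasov norm), and tracking this coupling carefully is where I expect the main care to be required. After collecting all four pieces, I would relabel constants to obtain the stated bound \eqref{eq:theta3:0}, noting that the coefficient of $\|X_2^n\|^2+\|Z_2^n\|^2$ assembles as $-1 + C\tau/h + C\tau^2/h^2$ from the extracted negative term, the $\frac{1}{h}$ self-interaction (times $\tau$), and the $\frac{\tau^2}{h^2}$ contribution from the $\|X_3^n\|^2$ estimate.
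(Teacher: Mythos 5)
Your proposal is correct and follows essentially the same route as the paper: the identical decomposition of $\Theta_3$ into the negative terms $-\|X_2^n\|_{0,\Omega_x}^2-\|Z_2^n\|_{0,\Omega_x}^2$ plus $\tau$ times the Runge--Kutta functionals plus $3(\|X_3^n\|_{0,\Omega_x}^2+\|Z_3^n\|_{0,\Omega_x}^2)$, the same use of Lemma~\ref{lemma:4.4} (with \eqref{eq:lem4:4_2} for the self- and $X_3^n,Z_3^n$-terms) and Lemma~\ref{lemma:new4.2}, and the same Young-inequality absorption of the cross term $\|G_2^n\|_{0,\Omega}\|X_2^n\|_{0,\Omega_x}$ through \eqref{eq:G=f}, which is indeed why the full energies $\ene^{n,\sharp}$ appear. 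The only cosmetic discrepancies are a misplaced factor of $\tau$ in your intermediate absorption and the phrasing of the $X_3^n$-estimate in Vlasov-style $(1+\tau/h)^2$ form rather than the paper's explicit $C(\tau^2h^{2k+2}+\tau^4h^{2k}+\tau^8)+C\frac{\tau^2}{h^2}(\cdot)+C\tau^2\|G_2^n\|_{0,\Omega}^2$, neither of which affects the result.
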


\begin{proof}
First note that
\begin{eqnarray}
 \Theta_3&=&-||X_2^n||_{0,\Omega_x}^2-||Z_2^n||_{0,\Omega_x}^2 \notag\\
 &&+ \tau (\mathcal{R}_{RK}(X_2^n, Z_2^n) +\mathcal{S}_{RK}(X_1^n, Z_1^n)+\mathcal{S}_{RK}(X_2^n, Z_2^n))
 + 3||X_3^n||_{0,\Omega_x}^2+3||Z_3^n||_{0,\Omega_x}^2.\label{eq:theta3:1}
\end{eqnarray}

Based on \eqref{eq:lem4:4_1}, \eqref{eq:lem4:4_3}, and Lemma \ref{lemma:new4.2}, in addition to \eqref{eq:G=f} and \eqref{eq:XZ=f} we have
\begin{align}
&\mathcal{R}_{RK}(X_2^n, Z_2^n) +\mathcal{S}_{RK}(X_1^n, Z_1^n)\notag\\
&\leq
b_h(X_2^n, Z_2^n, G_2^n; X_1^n, Z_1^n)   + b_h(X_1^n, Z_1^n, G_1^n; X_2^n, Z_2^n)
+ C(h^{k+1} + \tau h^k+ \tau^3)(||X_1^n||_{0,\Omega_x} + ||Z_1^n||_{0,\Omega_x})\notag\\
&+  C(h^{k+1} + \tau h^k)(||X_2^n||_{0,\Omega_x} + ||Z_2^n||_{0,\Omega_x})\notag\\
&\leq \frac{C}{h}(||X_2^n||_{0,\Omega_x}^2 + ||Z_2^n||_{0,\Omega_x}^2)
+ \frac{1}{16} ( \stab_{EB}^{n} + \stab_{EB}^{n,1} )\notag\\
 &  + C(h^{2k+2}+\tau^2 h^{2k} + \tau^6)
 + C\sum_{j=1}^2 (||G_j^n||_{0,\Omega}^2 + ||X_j^n||_{0,\Omega_x}^2 +||Z_j^n||_{0,\Omega_x}^2 )\notag\\
 &\leq \frac{C}{h}(||X_2^n||_{0,\Omega_x}^2 + ||Z_2^n||_{0,\Omega_x}^2)
+ \frac{1}{16} ( \stab_{EB}^{n} + \stab_{EB}^{n,1} ) + C(h^{2k+2}+\tau^2 h^{2k} + \tau^6)
 + C\sum_{j=1}^2 \ene^{n,\sharp}.
\end{align}
Next we estimate $\mathcal{S}_{RK}(X_2^n, Z_2^n)$ by using \eqref{eq:lem4:4_2} in Lemma \ref{lemma:4.4}.
  \begin{align}
    \mathcal{S}_{RK}(X_2^n, Z_2^n)
   &\leq
   C(h^{k+1} + \tau h^k+ \tau^3+ \frac{1}{h} (||Z_2^n||_{0,\Omega_x} + ||X_2^n||_{0,\Omega_x}))(||X_2^n||_{0,\Omega_x} + ||Z_2^n||_{0,\Omega_x})   + C ||G_2^n||_{0,\Omega} ||X_2^n||_{0,\Omega_x}\notag\\
   &\leq
   C(h^{2k+2} + \tau^2 h^{2k}+ \tau^6) + C\sum_{j=1}^2 \ene^{n,\sharp} + \frac{C}{h} (||Z_2^n||^2_{0,\Omega_x} + ||X_2^n||^2_{0,\Omega_x}).
 \end{align}
Finally we turn to
$3||X_3^n||_{0,\Omega_x}^2+3||Z_3^n||_{0,\Omega_x}^2$ in \eqref{eq:theta3:1}. By applying \eqref{eq:lem4:4_2} in Lemma \ref{lemma:4.4},
\begin{eqnarray*}
&&3||X_3^n||_{0,\Omega_x}^2+3||Z_3^n||_{0,\Omega_x}^2
=\tau\mathcal{S}_{RK}(X_3^n, Z_3^n)\\
&\leq&
 C\tau(h^{k+1} + \tau h^k+ \tau^3+ \frac{1}{h} (||Z_2^n||_{0,\Omega_x} + ||X_2^n||_{0,\Omega_x}))(||X_3^n||_{0,\Omega_x} + ||Z_3^n||_{0,\Omega_x})   + C \tau ||G_2^n||_{0,\Omega} ||X_3^n||_{0,\Omega_x}\\
 &\leq&
 C(\tau^2 h^{2k+2} + \tau^4 h^{2k}+ \tau^8)+ C\frac{\tau^2}{h^2} (||Z_2^n||^2_{0,\Omega_x} + ||X_2^n||^2_{0,\Omega_x})+C\tau^2||G_2^n||^2_{0,\Omega}
+||X_3^n||^2_{0,\Omega_x} + ||Z_3^n||^2_{0,\Omega_x},
\end{eqnarray*}
therefore, with a different value of $C$, we have
\begin{equation}
3||X_3^n||_{0,\Omega_x}^2+3||Z_3^n||_{0,\Omega_x}^2
\leq
 C(\tau^2 h^{2k+2} + \tau^4 h^{2k}+ \tau^8)+ C\frac{\tau^2}{h^2} (||Z_2^n||^2_{0,\Omega_x} + ||X_2^n||^2_{0,\Omega_x})+C\tau^2||G_2^n||^2_{0,\Omega}.
 \label{eq:theta3:2}
\end{equation}
Now by combining the results in \eqref{eq:theta3:1} - \eqref{eq:theta3:2} and \eqref{eq:G=f}, we can conclude the estimate for $\Theta_3$ in \eqref{eq:theta3:0}.
\end{proof}

The main error estimate result for the Maxwell solver is now established as following.
\begin{theorem}
 \label{thm:max}
  Let $(f, E, B)$ be a sufficiently smooth exact solution to equations \eqref{eq1:vm1}.
  Let $(f_h^n, E_h^n, B_h^n)\in\mG_h^k\times \mU_h^k\times \mU_h^k$ be the solution to the scheme \eqref{eq:1:2:3} at time $t^n$.
There exists a positive constant $\gamma_2$,
 such that for $\frac{\tau}{h} \leq \gamma_2$,
 the following estimate holds for $\forall n: n+1\leq  T/\tau$
 \begin{align}
 \label{eq:M21}
  & 3(||\xi_E^{n+1}||_{0,\Omega_x}^2 + ||\xi_B^{n+1}||_{0,\Omega_x}^2) -
    3(||\xi_E^{n}||_{0,\Omega_x}^2 + ||\xi_B^{n}||_{0,\Omega_x}^2)
  \leq
  C\tau h^{2k+1} + C\tau^7 + C \tau
  \sum_{\sharp=0}^{2} \ene^{n,\sharp}.
 \end{align}
\end{theorem}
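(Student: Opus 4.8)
The plan is to follow exactly the template established for Theorem~\ref{thm:vlas}: start from the one-step energy identity \eqref{eq:M1}, namely that the left-hand side of \eqref{eq:M21} equals $\Theta_1+\Theta_2+\Theta_3$, insert the three bounds furnished by Proposition~\ref{prop:Theta12} and Proposition~\ref{prop:Theta3}, and then fix the CFL constant $\gamma_2$ so that the negative quadratic terms produced by the temporal and spatial discretizations absorb their positive counterparts. Since the Maxwell part is linear in $(E,B)$ and couples to $f$ only through the linear current term inside $b_h$, no $L^\infty$-Assumption is required here, which is why it is absent from the hypotheses.

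First I would add the three estimates and treat the spatial stability contributions, which combine cleanly. Proposition~\ref{prop:Theta12} supplies $-\frac{7}{16}\tau(\stab_{EB}^n+\stab_{EB}^{n,1}+4\stab_{EB}^{n,2})$ from $\Theta_2$, while Proposition~\ref{prop:Theta3} contributes only $+\frac{\tau}{16}(\stab_{EB}^n+\stab_{EB}^{n,1})$ in $\Theta_3$. Their sum is a negative multiple of non-negative quantities, so the entire spatial stability contribution may simply be discarded. This is the direct analogue of the $\stab_f$ bookkeeping carried out in the proof of Theorem~\ref{thm:vlas}.

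Next I would handle the temporal stability mechanism. The only terms involving $||X_2^n||_{0,\Omega_x}^2+||Z_2^n||_{0,\Omega_x}^2$ occur in the $\Theta_3$ bound with coefficient $-1+C\frac{\tau}{h}+C\frac{\tau^2}{h^2}$. Because this $C$ is independent of $n,h,\tau$, there is a positive constant $\gamma_2$, likewise independent of $n,h,\tau$, such that whenever $\frac{\tau}{h}\leq\gamma_2$ the coefficient is at most $-\frac12<0$; the associated terms are then dropped. This selection of $\gamma_2$, making the third-order Runge-Kutta stability term dominate the inverse-inequality blow-up, is the crux of the argument and the only place where the CFL restriction is genuinely used, mirroring the role of $\gamma_1$ in the Vlasov estimate.

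Finally I would reconcile the consistency orders. Using $h\leq 1$ I would absorb each $C\tau h^{2k+2}$ into $C\tau h^{2k+1}$; under $\tau\leq\gamma_2 h$ the term $C\tau\cdot\tau^2 h^{2k}$ obeys $\tau^2 h^{2k}\leq\gamma_2^2 h^{2k+2}\leq Ch^{2k+1}$ and is absorbed in the same way; the contributions $C\tau\cdot\tau^6=C\tau^7$ arising from $\Theta_1$ and $\Theta_3$ are collected into a single $C\tau^7$; and the $C\tau\sum_{\sharp=0}^2\ene^{n,\sharp}$ pieces from all three $\Theta_i$ are merged into one such term. What remains is precisely \eqref{eq:M21}. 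Beyond the choice of $\gamma_2$ I expect no real difficulty, because the substantive analytic work has already been packaged into Propositions~\ref{prop:Theta12} and~\ref{prop:Theta3}, so the theorem reduces to careful accounting.
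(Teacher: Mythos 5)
Your proposal is correct and follows essentially the same route as the paper's own proof: both combine the energy identity \eqref{eq:M1} with Propositions \ref{prop:Theta12} and \ref{prop:Theta3}, choose $\gamma_2$ so that $-1+C\frac{\tau}{h}+C\frac{\tau^2}{h^2}\leq-\frac{1}{2}$, discard the resulting non-positive $\stab_{EB}$ and $\|X_2^n\|^2+\|Z_2^n\|^2$ contributions, and absorb $h^{2k+2}$ and $\tau^2h^{2k}$ into $h^{2k+1}$ using $h\leq 1$ and the CFL condition. Your remark that the $L^\infty$-Assumption is not needed here also matches the paper's own observation following the theorem.
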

\begin{proof}
Since the constant $C$ in the estimate \eqref{eq:theta3:0} is independent of $n, h, \tau$, there exists a positive constant $\gamma_2$ independent of
$n, h, \tau$, such that such that $-1 + C\frac{\tau}{h} + C\frac{\tau^2}{h^2} \leq -\frac{1}{2}$
 as long as  $\frac{\tau}{h} \leq \gamma_2$. Under this condition on the time step $\tau$, we also have $\tau^2 h^{2k}\leq \gamma_2^2 h^{2k+2}$.
Now we combine the estimates for $\Theta_i$, $i=1, 2, 3$ in Proposition \ref{prop:Theta12} and Proposition \ref{prop:Theta3}, and get
\begin{align*}
&3(||\xi_E^{n+1}||_{0,\Omega_x}^2 + ||\xi_B^{n+1}||_{0,\Omega_x}^2) -
    3(||\xi_E^{n}||_{0,\Omega_x}^2 + ||\xi_B^{n}||_{0,\Omega_x}^2)
    =\Theta_1+\Theta_2+\Theta_3\\
&\leq
   C\tau h^{2k+1} + C\tau^7+ C\tau \sum_{\sharp=0}^{2} \ene^{n,\sharp} -\frac{1}{2} (||X_2^n||_{0,\Omega_x}^2 + ||Z_2^n||_{0,\Omega_x}^2)
  - \frac{\tau}{8}\left(3\stab_{EB}^{n} + 3\stab_{EB}^{n,1} +14 \stab_{EB}^{n,2}\right)\\
  &\leq C\tau h^{2k+1} + C\tau^7+ C\tau \sum_{\sharp=0}^{2} \ene^{n,\sharp}.
\end{align*}
\end{proof}
\begin{remark}
 Unlike in Theorem \ref{thm:vlas}, the a priori assumption about the $L^{\infty}$ norm of the error in the
 magnetic and electric fields, together  with the condition of $k \geq \left \lceil \frac{d_x}{2} \right \rceil$, are not needed in 
  Theorem \ref{thm:max}. This difference is due to the nonlinear coupling terms in the Vlasov equation.
\end{remark}

\subsection{Proof of the main result: Theorem \ref{theorem:1}}
\label{sec:3.4}

The following lemma is the final preparation.
\begin{lemma}
 Suppose $\frac{\tau}{h} \leq \alpha$, where $\alpha$ is a positive constant independent of $n, h, \tau$. Then under the $L^\infty$-Assumption, the following inequalities are satisfied
\label{lemma:5.1}
 \begin{eqnarray*}
  ||\xi_f^{n,1}||_{0,\Omega}^2 &\leq& Ch^{2k+2} + C(||\xi_f^{n}||_{0,\Omega}^2 + \tau^2 ||\xi_E^{n}||_{0,\Omega_x}^2 +
               \tau^2 ||\xi_B^{n}||_{0,\Omega_x}^2)  \\
  ||\xi_f^{n,2}||_{0,\Omega}^2 &\leq& Ch^{2k+2} + C(||\xi_f^{n}||_{0,\Omega}^2 + ||\xi_f^{n,1}||_{0,\Omega}^2 + \tau^2 ||\xi_E^{n,1}||_{0,\Omega_x}^2 +
               \tau^2 ||\xi_B^{n,1}||_{0,\Omega_x}^2)  \\
  ||\xi_E^{n,1}||_{0,\Omega_x}^2 &\leq& Ch^{2k+2} + C(\tau^2 ||\xi_f^{n}||_{0,\Omega}^2 + ||\xi_E^{n}||_{0,\Omega_x}^2 +
               ||\xi_B^{n}||_{0,\Omega_x}^2)  \\
  ||\xi_E^{n,2}||_{0,\Omega_x}^2 &\leq& Ch^{2k+2} + C(\tau^2 ||\xi_f^{n,1}||_{0,\Omega_x}^2 + ||\xi_E^{n}||_{0,\Omega_x}^2+ ||\xi_E^{n,1}||_{0,\Omega_x}^2 +
               ||\xi_B^{n,1}||_{0,\Omega_x}^2) \\
  ||\xi_B^{n,1}||_{0,\Omega_x}^2 &\leq& Ch^{2k+2} + C( ||\xi_E^{n}||_{0,\Omega_x}^2 +
               ||\xi_B^{n}||_{0,\Omega_x}^2)  \\
  ||\xi_B^{n,2}||_{0,\Omega_x}^2 &\leq& Ch^{2k+2} + C(||\xi_E^{n,1}||_{0,\Omega_x}^2 + ||\xi_B^{n}||_{0,\Omega_x}^2+
               ||\xi_B^{n,1}||_{0,\Omega_x}^2).
 \end{eqnarray*}
 A direct consequence of these inequalities is
  \begin{equation}
 \label{eq:VM2}
   \sum_{\sharp=0}^{2}
    \ene^{n, \sharp}
  \leq
  C \ene^{n} + Ch^{2k+2}.
 \end{equation}
\end{lemma}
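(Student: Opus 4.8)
The plan is to handle each of the six intermediate-stage quantities separately by testing the corresponding scalar error equation in \eqref{eq:err1}--\eqref{eq:err2} with the stage quantity itself, and then to chain the bounds. For the first Vlasov stage I would take $g=\xi_f^{n,1}$ in the first line of \eqref{eq:err1}, which gives the identity $\|\xi_f^{n,1}\|_{0,\Omega}^2=(\xi_f^n,\xi_f^{n,1})_\Omega+\tau\mathcal{J}_1(\xi_f^{n,1})+\tau\mathcal{J}_2(\xi_f^{n,1})$. The first inner product is handled by Cauchy--Schwarz and Young, while the consistency term $\tau\mathcal{J}_1(\xi_f^{n,1})=(\eta_f^{n,1}-\eta_f^n,\xi_f^{n,1})_\Omega\le C\tau h^{k+1}\|\xi_f^{n,1}\|_{0,\Omega}$ by the zero-sum estimate \eqref{eq:2.2.1}, which is $\le Ch^{2k+2}+\epsilon\|\xi_f^{n,1}\|_{0,\Omega}^2$ since $\tau\le 1$. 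Everything then rests on $\tau\mathcal{J}_2$.

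The term $\tau\mathcal{J}_2(\xi_f^{n,1})=\tau[a_h(f^n,E^n,B^n;\xi_f^{n,1})-a_h(f_h^n,E_h^n,B_h^n;\xi_f^{n,1})]$ I would split exactly as in the proof of Proposition~\ref{3.4}, namely as $\tau a_h(\xi_f^n,E_h^n,B_h^n;\xi_f^{n,1})+\tau[a_h(f^n,E^n,B^n;\xi_f^{n,1})-a_h(f^n,E_h^n,B_h^n;\xi_f^{n,1})]-\tau a_h(\eta_f^n,E_h^n,B_h^n;\xi_f^{n,1})$. For the first and third pieces I would use the crude estimate $|a_h(w,E_h,B_h;g)|\le Ch^{-1}\|w\|_{0,\Omega}\|g\|_{0,\Omega}$ for $w,g\in\mathcal{G}_h^k$, which follows from the inverse inequalities \eqref{eq:inv} together with the uniform bound $\|E_h^n\|_{0,\infty},\|B_h^n\|_{0,\infty}\le C$ supplied by the $L^\infty$-Assumption and the smoothness of the exact solution. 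Under the CFL restriction $\tau/h\le\alpha$ the prefactor $C\tau/h$ is a constant, so, using $\|\eta_f^n\|_{0,\Omega}\le Ch^{k+1}$ from \eqref{eq:approx} and Young's inequality, these two pieces are bounded by $C\|\xi_f^n\|_{0,\Omega}^2+Ch^{2k+2}+\epsilon\|\xi_f^{n,1}\|_{0,\Omega}^2$. The middle piece differs only in its $E,B$ slots and is controlled by \eqref{eq:3.1.2_2_0} of Lemma~\ref{3.1} with $\sharp=0$, yielding $C\tau(\|\xi_E^n\|_{0,\Omega_x}+\|\xi_B^n\|_{0,\Omega_x}+h^{k+1})\|\xi_f^{n,1}\|_{0,\Omega}\le C\tau^2(\|\xi_E^n\|_{0,\Omega_x}^2+\|\xi_B^n\|_{0,\Omega_x}^2+h^{2k+2})+\epsilon\|\xi_f^{n,1}\|_{0,\Omega}^2$; the $\tau^2$ here is precisely what produces the $\tau^2\|\xi_E^n\|^2,\tau^2\|\xi_B^n\|^2$ weights in the first asserted inequality. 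Choosing the Young parameters $\epsilon$ so that the total coefficient of $\|\xi_f^{n,1}\|_{0,\Omega}^2$ stays below $1$ and absorbing gives the first line of the lemma. The second Vlasov stage is identical after testing $g=\xi_f^{n,2}$ in the second line of \eqref{eq:err1}, now expanding $\mathcal{K}_2$ about level $(n,1)$ and invoking \eqref{eq:3.1.2_2_0} with $\sharp=1$, which brings in the $\|\xi_f^{n,1}\|^2$ and $\tau^2\|\xi_E^{n,1}\|^2,\tau^2\|\xi_B^{n,1}\|^2$ terms.

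For the Maxwell stages the same scheme applies to \eqref{eq:err2}, but to reproduce the asymmetry between $E$ and $B$ I would test the two equations separately: taking $(U,V)=(\xi_E^{n,1},0)$ isolates the $E$-update and retains the current contribution $-\int_{\Omega_x}J_h(e_f^n)\cdot\xi_E^{n,1}\,dx$, whereas $(U,V)=(0,\xi_B^{n,1})$ isolates the $B$-update, whose $b_h$ carries no current term at all. The $b_h$ contributions are estimated by $|b_h(W,\widetilde W,w;U,V)|\le Ch^{-1}(\|W\|_{0,\Omega_x}+\|\widetilde W\|_{0,\Omega_x})(\|U\|_{0,\Omega_x}+\|V\|_{0,\Omega_x})+C\|w\|_{0,\Omega}\|U\|_{0,\Omega_x}$, where the curls fall on the test functions via \eqref{eq:inv} and $\|J_h(w)\|_{0,\Omega_x}\le C\|w\|_{0,\Omega}$ follows from Cauchy--Schwarz over the bounded $\Omega_v$. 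Using $\tau/h\le\alpha$, $\|e_\star^n\|_{0,\Omega_x}\le\|\xi_\star^n\|_{0,\Omega_x}+Ch^{k+1}$, and the consistency estimate \eqref{eq:2.2.1} for the $\mathcal{Q}_1$ parts, Young's inequality delivers the third and fifth inequalities; the $\tau^2\|\xi_f^n\|^2$ appears only in the $E$ bound precisely because the current is absent from the $B$ equation. The second Maxwell stage is handled the same way about level $(n,1)$.

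Finally, \eqref{eq:VM2} follows by chaining: summing the first, third and fifth inequalities and using $\tau\le 1$ gives $\ene^{n,1}\le C\ene^{n}+Ch^{2k+2}$, and then summing the second, fourth and sixth and substituting this bound gives $\ene^{n,2}\le C\ene^{n}+Ch^{2k+2}$, so $\sum_{\sharp=0}^2\ene^{n,\sharp}\le C\ene^{n}+Ch^{2k+2}$. I expect the main obstacle to be the nonlinear coupling term $a_{h,2}$: it is the only place where the $L^\infty$-Assumption is indispensable (to keep $\|E_h\|_{0,\infty},\|B_h\|_{0,\infty}$ bounded so that the $Ch^{-1}$ estimate holds), and it forces one to track exactly which differences sit in the $E,B$ slots so that the $\tau$-weighted bound \eqref{eq:3.1.2_2_0} is applied there rather than the lossy $h^{-1}$ bound. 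The accompanying bookkeeping, namely choosing the several Young parameters so that the coefficient of the stage norm on the right stays strictly below $1$ while all surviving constants remain independent of $h,\tau,n$, is routine but must be carried out with the CFL-boundedness of $\tau/h$ firmly in hand.
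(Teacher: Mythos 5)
Your proposal is correct and follows essentially the same route as the paper's proof: test each stage error equation with the stage projection error itself, split the $a_h$ difference into the $\xi_f$-piece, the $\eta_f$-piece, and the $(E,B)$-perturbation piece, bound the first two using inverse inequalities plus the $L^\infty$-Assumption and the third using \eqref{eq:3.1.2_2_0}, then invoke $\tau/h\le\alpha$ to absorb the $h^{-1}$ factors (the paper cancels one factor of the stage norm and squares where you use Young absorption, a cosmetic difference, and it dispatches the Maxwell stages with ``likewise'' whereas you correctly spell out the $(U,0)$ / $(0,V)$ testing that produces the $E$/$B$ asymmetry). One minor caveat: your crude bound $|a_h(w,E_h,B_h;g)|\le Ch^{-1}\|w\|_{0,\Omega}\|g\|_{0,\Omega}$ is justified by the inverse inequalities \eqref{eq:inv} only for $w\in\mathcal{G}_h^k$, so when you apply it to the piece with $w=\eta_f^n\notin\mathcal{G}_h^k$ the edge terms must instead be controlled by the trace approximation estimate in \eqref{eq:approx} (exactly as the paper does to get $|a_h(\eta_f^{n,\sharp},E_h^{n,\sharp},B_h^{n,\sharp};g)|\le Ch^{k}\|g\|_{0,\Omega}$), which yields the same final constants.
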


Now we are ready for the proof of Theorem~\ref{theorem:1}.
\begin{proof}
 First we make the a priori assumption for the $L^\infty$ error of the magnetic and electric fields as in $L^\infty$-Assumption.
 Based on Theorem \ref{thm:vlas} and Theorem \ref{thm:max}, for any $\frac{\tau}{h}\leq \gamma:= \min(\gamma_1, \gamma_2)$,  we get
 \begin{eqnarray}
   \label{eq:VM1}
   3 \ene^{n+1} - 3 \ene^{n}
  &\leq&
  C\tau^7 + C\tau h^{2k+1}  + C \tau
  \sum_{\sharp=0}^{2} \ene^{n, \sharp}
  \leq
  \hat{C}\left(\tau^7 + \tau h^{2k+1}  + \tau \ene^{n}
   \right).
 \end{eqnarray}
 Here $\eqref{eq:VM2}$ is used to get the last inequality.
 Let $\Upsilon_{n} =  \ene^{n}
  /(1 + \frac{\hat{C}\tau}{3})^{n} $,
  then $\eqref{eq:VM1}$ leads to
  $\Upsilon_{n} - \Upsilon_{n-1} \leq
   \frac{\hat{C}(\tau^7 + \tau h^{2k+1})}{3(1 + \frac{\hat{C}}{3}\tau)^{n}}
  $.
We now sum up $\Upsilon_{\sharp} - \Upsilon_{\sharp - 1}$ and use $\Upsilon_{0} = 0$ to obtain
  \begin{equation}
   \Upsilon_{n}
   \leq \sum_{\sharp = 1}^n \frac{\hat{C}(\tau^7 + \tau h^{2k+1})}{3(1+ \frac{\hat{C}}{3}\tau)^{\sharp}}
   \leq
   \tau^6 + h^{2k+1}.
  \end{equation}
  Therefore
  \begin{equation}
   \ene^{n}
   =
   (1 + \frac{\hat{C}\tau}{3})^n \Upsilon_{n}
   \leq
    e^{\frac{\hat{C}n\tau}{3}} (\tau^6 + h^{2k+1})
   \leq
    e^{\frac{\hat{C}T}{3}} (\tau^6 + h^{2k+1}),
  \end{equation}
   for any $n\leq T/\tau$. This can also be written as
  \begin{equation}
  \label{eq:VM8}
   ||\xi_f^{n}||_{0,\Omega}^2 +
   ||\xi_E^{n}||_{0,\Omega_x}^2 + ||\xi_B^{n}||_{0,\Omega_x}^2
   \leq
   C (\tau^6 + h^{2k+1}).
  \end{equation}
  All that remains is to prove that the $L^\infty$-Assumption is actually satisfied. This will be established by mathematical induction.

  For $n=0$, $\xi_f^{0} = \xi_E^{0} = \xi_B^{0} = 0$ and the approximation property \eqref{eq:approx} implies that $||e_E^{0}||_{0,\infty,\Omega_x}$, $||e_B^{0}||_{0,\infty,\Omega_x} \leq Ch$,
  where $C$ is independent of $n, h, \tau$.
  Furthermore, Lemma~\ref{lemma:5.1} shows
  $||\xi_f^{0,\sharp}||_{0,\Omega}$, $||\xi_E^{0,\sharp}||_{0,\Omega_x}$,
  $||\xi_B^{0,\sharp}||_{0,\Omega_x} \leq Ch^{k+1}$ for $\sharp = 1,2$.
  Thus
  \begin{equation*}
    ||e_E^{0,\sharp}||_{0,\infty,\Omega_x}
    \leq ||\xi_E^{0,\sharp}||_{0,\infty,\Omega_x}+||\eta_E^{0,\sharp}||_{0,\infty,\Omega_x}
    \leq
    C h^{-\frac{d_x}{2}} ||\xi_E^{0,\sharp}||_{0,\Omega_x} +
    C h^{k+1}
    \leq
    C h^{k+1-\frac{d_x}{2}}
    \leq C h
    .
  \end{equation*}
  Here the inverse inequality $\eqref{eq:inv}$ and the condition $k\geq \left \lceil \frac{d_x}{2} \right \rceil$ are used.
  Similarly
  $||e_B^{0,\sharp}||_{0,\infty,\Omega_x}\leq C h$.
  Suppose $||e_B^{n}||_{0,\infty,\Omega_x}, ||e_E^{n}||_{0,\infty,\Omega_x}\leq C h$ for all $n\leq m$, therefore
  inequality \eqref{eq:VM8} with $n=m+1$ is satisfied.
  Combining this with Lemma~\ref{lemma:5.1} implies  $||\xi_f^{m+1,\sharp}||_{0,\Omega}, ||\xi_E^{m+1,\sharp}||_{0,\Omega_x},
  ||\xi_B^{m+1,\sharp}||_{0,\Omega_x} \leq C (\tau^3 + h^{k+\frac{1}{2}})$ for $\sharp =0,1,2$. Thus
  \begin{eqnarray}
  \label{eq:VM9}
    ||e_E^{m+1,\sharp}||_{0,\infty,\Omega_x}
    &\leq&
    C h^{-\frac{d_x}{2}} ||\xi_E^{m+1,\sharp}||_{0,\Omega_x} +
    C h^{k+1}
    \leq
    C h^{-\frac{d_x}{2}} (\tau^3 + h^{k+\frac{1}{2}}) + Ch^{k+1}
    \nonumber\\
    &\leq&
    C h^{-\frac{d_x}{2}} \tau^3  + Ch^{k+\frac{1}{2} -\frac{d_x}{2}}
    \leq
    C \gamma^3 h^{3 -\frac{d_x}{2}} + Ch^{k+\frac{1}{2} -\frac{d_x}{2}}
    \leq
    C h,
  \end{eqnarray}
  for $k \geq \left \lceil \frac{d_x + 1}{2} \right \rceil$ (recall $d_x\leq 3$),  under the condition $\frac{\tau}{h} \leq \gamma$.
 Up to now the a priori assumption is established, and this completes the proof of the main theorem of this paper.
\end{proof}
  \begin{remark}
    The condition $k \geq  \left \lceil \frac{d_x + 1}{2} \right \rceil$ is needed only for the last inequality of $\eqref{eq:VM9}$ throughout the proof, while
    for  all the other results, the requirement $k \geq \left \lceil \frac{d_x}{2} \right \rceil$ is enough.
  \end{remark}

\section{Proofs of Some Lemmas}
\label{sec:4}

In this section, we will provide the proofs of some lemmas in section \ref{sec:3}.

\subsection{Proof of Lemma \ref{lemma:2}}
\label{proof:lemma:2}

\underline{To get \eqref{eq:2.2.1}}, we start with $f^{n,1}$ and $f^{n,2}$ given in
\eqref{eq:exact_new},  and get
\begin{align}
 &d_0 f^n + d_1 f^{n,1}+d_2 f^{n,2}+d_3 f^{n+1}  \nonumber\\
 =
 &d_3 (f^{n+1} - f^{n}) + \tau \left(d_1 + \frac{d_2}{2} \right)\partial_t f^{n} + \frac{\tau^2}{4} d_2 \partial_t^2 f^n
   - \frac{\tau^3}{4} d_2 (\partial_t E^n + v\times \partial_t B^n)\cdot \nabla_v(\partial_t f^n)\notag\\
 =&\tau d_3\partial_t f(x, v, t^\star) + \tau \left(d_1 + \frac{d_2}{2} \right)\partial_t f^{n} + \frac{\tau^2}{4} d_2 \partial_t^2 f^n
   - \frac{\tau^3}{4} d_2 (\partial_t E^n + v\times \partial_t B^n)\cdot \nabla_v(\partial_t f^n),
   \end{align}
   for some $t^\star\in[t^n, t^{n+1}]$. Here we have used $d_0 + d_1 + d_2 + d_3=0$. Therefore, with $I$ as the identity operator,
\begin{equation*}
 d^n_f
 =(\Pi^k-I)\left(\tau d_3\partial_t f(x,v,t^\star) + \tau \left(d_1 + \frac{d_2}{2} \right)\partial_t f^{n} + \frac{\tau^2}{4} d_2 \partial_t^2 f^n
   - \frac{\tau^3}{4} d_2 (\partial_t E^n + v\times \partial_t B^n)\cdot \nabla_v(\partial_t f^n)\right).
\end{equation*}
 For sufficiently smooth solution,  there is
\begin{equation*}
 ||d_f^n||_{0,\Omega}
 \leq
 C\tau h^{k+1} \max_{\forall t\in [0, T]}
 \left(
 ||\partial_t f||_{k+1,\Omega} + \tau ||\partial_t^2 f||_{k+1,\Omega} + \tau^2 || (\partial_t E + v\times \partial_t B)\cdot \nabla_v(\partial_t f) ||_{k+1,\Omega}
 \right).
\end{equation*}
Recall that $\tau \leq 1$, we further have $||d_f^n||_{0,\Omega} \leq C\tau h^{k+1}$.
Similarly one can show $||d_f^n||_{0,\mathcal{E}} \leq C\tau h^{k+\frac{1}{2}}$.
These two estimates will lead to the upper bound in $\eqref{eq:2.2.1}$.
The proof of the results for $E$ and $B$ can be proceeded similarly.

\bigskip
\noindent\underline{To get \eqref{eq:2.2.1_1}}, based on definition
\begin{eqnarray*}
  & & |a_h(d^n_f, E_h^{n,s}, B_h^{n,s}; g)|               \nonumber\\
  &\leq& \int_{\Omega} |d^n_f v \cdot \nabla_x g|dx dv +
   \int_{T_h^v}\int_{\mathcal{E}_x} \left(  |\{d^n_f v \}_x + \frac{|v\cdot n_x|}{2} [d^n_f]_x| \right) |[g]_x| ds_x dv  
   + \int_\Omega |d^n_f (E_h^{n,s}+v\times B_h^{n,s}) \nabla_v g| dxdv\\
  && + \int_{T_h^x} \int_{\mathcal{E}_v}  |\left(\{d^n_f (E_h^{n,s}+v\times B_h^{n,s}) \}_v
  + \frac {|(E_h^{n,s} + v\times B_h^{n,s}) \cdot n_v|}{2} [d^n_f]_v\right)\cdot  [g]_v|   ds_v dx             \nonumber\\
  &\leq&
  C \left(||d^n_f||_{0,\Omega} ||\nabla_x g||_{0,\Omega} +  ||d^n_f||_{0,T_h^v \times \mathcal{E}_x} ||g||_{0,T_h^v \times \mathcal{E}_x} \right)       \nonumber\\
  & &+
  C \left( || E_h^{n,s}||_{0,\infty,\Omega_x} + || B_h^{n,s}||_{0,\infty,\Omega_x} \right) \left(||d^n_f||_{0,\Omega}
   ||\nabla_v g||_{0,\Omega} + ||d^n_f||_{0,T_h^x \times \mathcal{E}_v}
   ||g||_{0,T_h^x \times \mathcal{E}_v}  \right)               \nonumber\\
  &\leq& \frac{C}{h}  ||g||_{0,\Omega} \left( ||d^n_f||_{0,\Omega} + h^{\frac{1}{2}} ||d^n_f||_{0,T_h^v \times \mathcal{E}_x} \right)  \nonumber\\
  & &+
  \frac{C}{h}  ||g||_{0,\Omega} \left( || E_h^{n,s}||_{0,\infty,\Omega_x} + || B_h^{n,s}||_{0,\infty,\Omega_x} \right)
     \left( ||d^n_f||_{0,\Omega} + h^{\frac{1}{2}} ||d^n_f||_{0,T_h^x \times \mathcal{E}_v} \right) .
\end{eqnarray*}
By the a priori assumption, there is
 \begin{align}
 \label{eq:lem:6:proof}
  &||E_h^{n,s}||_{0,\infty,\Omega_x}
   \leq ||e_E^{n,s}||_{0,\infty,\Omega_x} + ||E^{n,s}||_{0,\infty,\Omega_x}
   \leq Ch + C \leq C,
 \end{align}
and  similarly,
 \begin{equation}
 \label{eq:lem:6:proof:1}
  ||B_h^{n,s}||_{0,\infty,\Omega_x} \leq C.
 \end{equation}
Finally we apply \eqref{eq:2.2.1} and conclude
\begin{align}
   |a_h(d^n_f, E_h^{n,s}, B_h^{n,s}; g)|  \leq C\frac{\tau}{h} ||g||_{0,\Omega} h^{k+1} 
   \leq C \frac{\tau}{h} (h^{2k+2} + ||g||_{0,\Omega}^2).
\end{align}


\bigskip
\noindent\underline{To get \eqref{eq:2.2.1_4}},
\begin{eqnarray*}
 & & b_h(d_E^n, d_B^n, d_f^n; U, V) \\
 &=& \int_{\Omega_x} d_B^n \cdot (\nabla \times U) dx - \int_{\Omega_x} d_E^n \cdot (\nabla \times V) dx - \int_{\Omega_x} (\int_{\Omega_v} d_f^n v dv)\cdot U dx \nonumber\\
 & & + \int_{\mathcal{E}_x} (\{ d_B^n \}_x - \frac{1}{2}[d_E^n]_{tan})\cdot [U]_{tan} ds_x - \int_{\mathcal{E}_x} (\{ d_E^n \}_x + \frac{1}{2}[d_B^n]_{tan})\cdot [V]_{tan} ds_x \nonumber\\
 &=& - \int_{\Omega_x} (\int_{\Omega_v} d_f^n v dv)\cdot U dx
     + \int_{\mathcal{E}_x} (\{ d_B^n \}_x - \frac{1}{2}[d_E^n]_{tan})\cdot [U]_{tan} ds_x - \int_{\mathcal{E}_x} (\{ d_E^n \}_x + \frac{1}{2}[d_B^n]_{tan})\cdot [V]_{tan} ds_x,\nonumber
\end{eqnarray*}
 where the second equality is due to $\nabla \times U, \nabla \times V \in \mathcal{U}_h^k$, which leads to
 $\int_{\Omega_x} d_B^n \cdot (\nabla \times U)dx = \int_{\Omega_x} d_E^n \cdot (\nabla \times V)dx = 0$.
 We now can apply inverse inequality \eqref{eq:inv} and \eqref{eq:2.2.1} to obtain \eqref{eq:2.2.1_4}.

\subsection{Proof of Lemma \ref{3.1}}
\label{proof:3.1}

\underline{To get \eqref{eq:3.1.2}}, based on definition, for any $g \in \mathcal{G}_h^k$, $U, V \in \mathcal{U}_h^k$

  \begin{eqnarray}
   \label{eq:3.1.3}
   a_{h,1}(g; g) &=& \int_{\Omega} g v\cdot \nabla_x g dxdv - \int_{T_h^v}\int_{\mathcal{E}_x}\left(\{g v\}_x + \frac{|v\cdot n_x|}{2}[g]_x \right) \cdot [g]_x ds_x dv               \nonumber\\
   &=& \frac{1}{2} \int_{T_h^v} \sum_{K_x \in T_h^x} \int_{\partial K_x} v\cdot n_x  g^2 ds_x dv - \int_{T_h^v}\int_{\mathcal{E}_x}\left(\{g v\}_x + \frac{|v\cdot n_x|}{2}[g]_x \right) \cdot [g]_x ds_x dv      \nonumber\\
   &=&  \int_{T_h^v}\int_{\mathcal{E}_x} \left( \frac{1}{2} v \cdot [g^2]_x - \left(\{g v\}_x + \frac{|v\cdot n_x|}{2}[g]_x \right)\cdot [g]_x \right) ds_x dv \nonumber\\
   &=&  \int_{T_h^v}\int_{\mathcal{E}_x} \left(\left( \frac{1}{2}  [g^2]_x - \{g \}_x
   [g]_x \right)\cdot v - \frac{|v\cdot n_x|}{2}|[g]_x|^2 \right) ds_x dv          \nonumber\\
   &=& -\frac{1}{2} \int_{T_h^v}\int_{\mathcal{E}_x}|v\cdot n_x| |[g]_x|^2 ds_x dv.
  \end{eqnarray}

Here the property of jump and average in $\eqref{eq:jump1}$ is used to get the last equality in \eqref{eq:3.1.3}.
Similarly, there is $ a_{h,2}(g, U, V; g)= -\frac{1}{2} \int_{T_h^x}\int_{\mathcal{E}_v} |(U + v\times V) \cdot n_v|  |[g]_v|^2  ds_v dx$.
Finally, taking $g = \xi_f^{n,\sharp}$, $U = E_h^{n,\sharp}$, $V = B_h^{n,\sharp}$ leads to \eqref{eq:3.1.2}.

\bigskip
\noindent\underline{To get \eqref{eq:3.1.2_1}}, we will proceed the proof in two steps.

\smallskip
\noindent {\sf Step 1: To estimate $a_{h,1}(\eta_f^{n,\sharp};\xi_f^{n,\sharp})$}.
By definition,
 \begin{equation}
  a_{h,1}(\eta_f^{n,\sharp};\xi_f^{n,\sharp}) = \int_{\Omega} \eta_f^{n,\sharp} v\cdot \nabla_x \xi_f^{n,\sharp}dx dv
  - \int_{T_h^v}\int_{\mathcal{E}_x} \left(\{\eta_f^{n,\sharp}v \}_x + \frac{|v\cdot n_x|}{2}[\eta_f^{n,\sharp}]_x   \right)\cdot [\xi_f^{n,\sharp}]_x ds_x dv.
 \end{equation}
 Let $v_0$ be the $L^2$ projection of the function $v$ onto the piecewise constant space with respect to $T_h^v$, then
 \begin{eqnarray}
  \int_{\Omega} \eta_f^{n,\sharp} v\cdot \nabla_x \xi_f^{n,\sharp}dx dv
   &=&
  \int_{\Omega} \eta_f^{n,\sharp} (v-v_0)\cdot \nabla_x \xi_f^{n,\sharp}dx dv + \int_{\Omega} \eta_f^{n,\sharp} v_0\cdot \nabla_x \xi_f^{n,\sharp}dx dv      \nonumber\\
   &=& \int_{\Omega} \eta_f^{n,\sharp} (v-v_0)\cdot \nabla_x \xi_f^{n,\sharp}dx dv.
 \end{eqnarray}
 The last equality is satisfied because $v_0\cdot \nabla_x \xi_f^{n,\sharp} \in \mathcal{G}_h^k$ and the $L^2$ projection of $\eta_f^{n,\sharp}$ onto $\mathcal{G}_h^k$ vanishes.
We further have
 \begin{align}
 \label{3.5}
  \left|\int_{\Omega} \eta_f^{n,\sharp} v\cdot \nabla_x \xi_f^{n,\sharp}dx dv\right| &\leq ||v-v_0||_{0,\infty,\Omega_v} \sum_{K\in T_h} (h_{K_x}^{-1}||\eta_f^{n,\sharp}||_{0,K}) (h_{K_x}||\nabla_x \xi_f^{n,\sharp}||_{0,K}) \nonumber\\
  &\leq C h_v ||v||_{1,\infty,\Omega_v} \sum_{K\in T_h}
  h_{K}^{k+1} h_{K_x}^{-1} ||f^{n,\sharp}||_{k+1,K}    || \xi_f^{n,\sharp}||_{0,K}
  \leq Ch^{k+1} || \xi_f^{n,\sharp}||_{0,\Omega}.
 \end{align}
 Here Cauchy-Schwarz inequality and approximate properties $\eqref{eq:approx}$ are used for the first and second inequality above, respectively.
 Applying the similar technique to the second term of $a_{h,1}(\eta_f^{n,\sharp};\xi_f^{n,\sharp})$, we get
\begin{align}
\label{3.6}
  & \left|\int_{T_h^v}\int_{\mathcal{E}_x} \left(\{\eta_f^{n,\sharp}v \}_x + \frac{|v\cdot n_x|}{2}[\eta_f^{n,\sharp}]_x   \right)\cdot [\xi_f^{n,\sharp}]_x ds_x dv\right| \nonumber\\
  &\leq \int_{T_h^v} \int_{\mathcal{E}_x}\left( |v\cdot n_x| (|\{ \eta_f^{n,\sharp} \}_x| + \frac{| [\eta_f^{n,\sharp}]_x  | }{2} )  \right) |[\xi_f^{n,\sharp}]_x| ds_x dv   \nonumber\\
  &\leq \left(\int_{T_h^v} \int_{\mathcal{E}_x} 2\left(  |\{ \eta_f^{n,\sharp} \}_x|^2 +  (\frac{| [\eta_f^{n,\sharp}]_x  | }{2})^2     \right) |v\cdot n_x|ds_x dv\right)^{\frac{1}{2}}  \left(\int_{T_h^v}\int_{\mathcal{E}_x} |v\cdot n_x| |[\xi_f^{n,\sharp}]_x|^2ds_x dv\right)^{\frac{1}{2}}  \nonumber \\
  &\leq C||\eta_{f}^{n,\sharp}||_{0,T_h^v \times \mathcal{E}_x}  \left(\int_{T_h^v}\int_{\mathcal{E}_x} |v\cdot n_x| |[\xi_f^{n,\sharp}]_x|^2ds_x dv\right)^{\frac{1}{2}}
  \leq Ch^{k+\frac{1}{2}} \left(\int_{T_h^v}\int_{\mathcal{E}_x} |v\cdot n_x| |[\xi_f^{n,\sharp}]_x|^2ds_x dv\right)^{\frac{1}{2}}.
\end{align}
By $\eqref{3.5}, \eqref{3.6}$ and Young's inequality, there is
\begin{eqnarray}
 \label{eq:3.2.3}
 a_{h,1}(\eta_f^{n,\sharp};\xi_f^{n,\sharp})
 &\leq& C|| \xi_f^{n,\sharp}||_{0,\Omega}^2 + C h^{2k+1} + \frac{1}{16}\int_{T_h^v}\int_{\mathcal{E}_x} |v\cdot n_x| |[\xi_f^{n,\sharp}]_x|^2ds_x dv.
\end{eqnarray}

\smallskip
\noindent{\sf Step 2: To estimate $a_{h,2}(\eta_f^{n,\sharp}, E_h^{n,\sharp}, B_h^{n,\sharp};\xi_f^{n,\sharp} ) $}.
Let $E_0 = \Pi_x^0 E^{n,\sharp}, B_0 = \Pi_x^0 B^{n,\sharp}$ be the $L^2$ projection of $E^{n,\sharp}, B^{n,\sharp}$ onto piecewise constant vector space with respect to $T_h^x$, then
\begin{align}
  \label{eq:3.2.1}
  &\left|\int_{\Omega} \eta_f^{n,\sharp} (E_h^{n,\sharp} + v\times B_h^{n,\sharp}) \cdot \nabla_v \xi_f^{n,\sharp}dx dv\right| \nonumber\\
  &= \left|\int_{\Omega} \eta_f^{n,\sharp} (E_h^{n,\sharp}-E_0 + v\times (B_h^{n,\sharp}-B_0) ) \cdot \nabla_v \xi_f^{n,\sharp}dx dv\right| \nonumber\\
  &\leq  (||E_h^{n,\sharp}-E_0||_{0,\infty,\Omega_x} + C||B_h^{n,\sharp}-B_0 ||_{0,\infty,\Omega_x} ) ||\eta_f^{n,\sharp}||_{0,\Omega} ||\nabla_v \xi_f^{n,\sharp}||_{0,\Omega}.
\end{align}
The first equality of $\eqref{eq:3.2.1}$ is satisfied because $(E_0 + v \times B_0)\cdot \nabla_v \xi_f^{n,\sharp} \in \mathcal{G}_h^k$, combining with the fact that the $L^2$ projection of $\eta_f^{n,\sharp}$ onto $\mathcal{G}_h^k$ is equal to zero.
Since the operator $\Pi_x^k$ is bounded in any $L^p$ $(1\leq p \leq \infty)$ norm \cite{CrouzeixThomee:1987}, we can further estimate $||E_h^{n,\sharp}-E_0||_{0,\infty,\Omega_x} \leq ||E_h^{n,\sharp}-\Pi_x^k E^{n,\sharp}||_{0,\infty,\Omega_x} + ||\Pi_x^k E^{n,\sharp} - E_0||_{0,\infty,\Omega_x}$, and $||\Pi_x^k E^{n,\sharp} - E_0||_{0,\infty,\Omega_x} = ||\Pi_x^k (E^{n,\sharp} -E_0) ||_{0,\infty,\Omega_x} \leq C||E^{n,\sharp}-E_0 ||_{0,\infty,\Omega_x} \leq C h_x ||E^{n,\sharp}||_{1,\infty,\Omega_x}$.
Thus, $  ||E_h^{n,\sharp}-E_0||_{0,\infty,\Omega_x} \leq ||\xi_{E}^{n,\sharp}||_{0,\infty,\Omega_x} + C h_x$. Similar treatment can be applied to $||B_h^{n,\sharp}-B_0||_{0,\infty,\Omega_x}$.
Therefore,
\begin{align}
 \label{3.7}
  &\left|\int_{\Omega} \eta_f^{n,\sharp} (E_h^{n,\sharp} + v\times B_h^{n,\sharp}) \cdot \nabla_v \xi_f^{n,\sharp}dx dv\right| \nonumber\\
  &\leq  ( ||\xi_{E}^{n,\sharp}||_{0,\infty,\Omega_x}  + ||\xi_{B}^{n,\sharp}||_{0,\infty,\Omega_x} + C h_x) ||\eta_f^{n,\sharp}||_{0,\Omega} ||\nabla_v \xi_f^{n,\sharp}||_{0,\Omega}
  \nonumber\\
  &\leq ( ||\xi_{E}^{n,\sharp}||_{0,\infty,\Omega_x}  + ||\xi_{B}^{n,\sharp}||_{0,\infty,\Omega_x} + C h_x)Ch^k || \xi_f^{n,\sharp}||_{0,\Omega}     \nonumber\\
  &\leq Ch^{k-\frac{d_x}{2}} (||\xi_{E}^{n,\sharp}||_{0,\Omega_x}  + ||\xi_{B}^{n,\sharp}||_{0,\Omega_x})||\xi_f^{n,\sharp}||_{0,\Omega}   + Ch^{k+1} || \xi_f^{n,\sharp}||_{0,\Omega}.
\end{align}
  Here we have applied the inverse inequality in $\eqref{eq:inv}$ to the last inequality above.
  The second term in $a_{h,2}(\eta_{f}^{n,\sharp}, E_h^{n,\sharp}, B_h^{n,\sharp}; \xi_f^{n,\sharp})$ can be estimated as
\begin{align}
  & \left|\int_{T_h^x}\int_{\mathcal{E}_v} \left(\{\eta_f^{n,\sharp}(E_h^{n,\sharp} + v\times B_h^{n,\sharp}) \}_v + \frac{|(E_h^{n,\sharp} + v\times B_h^{n,\sharp}) \cdot n_v|}{2}[\eta_f^{n,\sharp}]_v   \right)\cdot [\xi_f^{n,\sharp}]_v ds_v dx\right|
   \label{3.7.1}\\
    &\leq \int_{T_h^x} \int_{\mathcal{E}_v}\left( |(E_h^{n,\sharp} + v\times B_h^{n,\sharp}) \cdot n_v| (|\{ \eta_f^{n,\sharp} \}_v| + \frac{| [\eta_f^{n,\sharp}]_v  | }{2} )  \right)  |[\xi_f^{n,\sharp}]_v| ds_v dx   \nonumber\\
  &\leq \left(\int_{T_h^x} \int_{\mathcal{E}_v} 2|\{ (\eta_f^{n,\sharp})^2 \}_v|  | (E_h^{n,\sharp} + v\times B_h^{n,\sharp})\cdot n_v|ds_v dx\right)^{\frac{1}{2}}   
  \left(\int_{T_h^x}\int_{\mathcal{E}_v}  | (E_h^{n,\sharp} + v\times B_h^{n,\sharp})\cdot n_v| |[\xi_f^{n,\sharp}]_v|^2ds_v dx\right)^{\frac{1}{2}} \nonumber \\
  &\leq C||\eta_{f}^{n,\sharp}||_{0,T_h^x \times \mathcal{E}_v}  (||E_h^{n,\sharp}||_{0,\infty,\Omega_x}^{\frac{1}{2}} + ||B_h^{n,\sharp}||_{0,\infty,\Omega_x}^{\frac{1}{2}})
   \left(\int_{T_h^x}\int_{\mathcal{E}_v}  | (E_h^{n,\sharp} + v\times B_h^{n,\sharp})\cdot n_v| |[\xi_f^{n,\sharp}]_v|^2ds_v dx\right)^{\frac{1}{2}}
   \nonumber\\
  &\leq C h^{k+\frac{1}{2}}  (||E_h^{n,\sharp}||_{0,\infty,\Omega_x}^{\frac{1}{2}} + ||B_h^{n,\sharp}||_{0,\infty,\Omega_x}^{\frac{1}{2}})
   \left(\int_{T_h^x}\int_{\mathcal{E}_v}  | (E_h^{n,\sharp} + v\times B_h^{n,\sharp})\cdot n_v| |[\xi_f^{n,\sharp}]_v|^2ds_v dx\right)^{\frac{1}{2}}. \notag
\end{align}
Note $||E_h^{n,\sharp}||_{0,\infty,\Omega_x}^{\frac{1}{2}}
\leq ||\xi_E^{n,\sharp}||_{0,\infty,\Omega_x}^{\frac{1}{2}} + ||\Pi_x^k E^{n,\sharp}||_{0,\infty,\Omega_x}^{\frac{1}{2}}
\leq ||\xi_E^{n,\sharp}||_{0,\infty,\Omega_x}^{\frac{1}{2}} + C||E^{n,\sharp}||_{0,\infty,\Omega_x}^{\frac{1}{2}}$.
From the inverse inequality in $\eqref{eq:inv}$, there is $||\xi_E^{n,\sharp}||_{0,\infty,\Omega_x} \leq Ch^{-\frac{d_x}{2}} ||\xi_E^{n,\sharp}||_{0,\Omega_x}$. Since $||E^{n,\sharp}||_{0,\infty,\Omega_x}$ is bounded, $||E_h^{n,\sharp}||_{0,\infty,\Omega_x}^{\frac{1}{2}} \leq C(1 + h^{-\frac{d_x}{4}} ||\xi_E^{n,\sharp}||_{0,\Omega_x}^{\frac{1}{2}} )$.
A similar estimate also holds for $||B_h^{n,\sharp}||_{0,\infty,\Omega_x}^{\frac{1}{2}}$. Combing the results,
 we have
\begin{align}
\label{eq:3.8}
  ||E_h^{n,\sharp}||_{0,\infty,\Omega_x}^{\frac{1}{2}} + ||B_h^{n,\sharp}||_{0,\infty,\Omega_x}^{\frac{1}{2}}
    \leq C \left(1 + h^{-\frac{d_x}{4}}( ||\xi_E^{n,\sharp}||_{0,\Omega_x}^{\frac{1}{2}} + ||\xi_B^{n,\sharp}||_{0,\Omega_x}^{\frac{1}{2}}) \right).
\end{align}
Equations \eqref{3.7} - \eqref{eq:3.8} lead to
\begin{align*}
 &a_{h,2}(\eta_f^{n,\sharp},E_h^{n,\sharp},B_h^{n,\sharp};\xi_f^{n,\sharp})           \nonumber\\
 &\leq
 Ch^{k-\frac{d_x}{2}} (||\xi_{E}^{n,\sharp}||_{0,\Omega_x}  + ||\xi_{B}^{n,\sharp}||_{0,\Omega_x})||\xi_f^{n,\sharp}||_{0,\Omega}   + Ch^{k+1} || \xi_f^{n,\sharp}||_{0,\Omega} \nonumber\\
 &+
 Ch^{k+\frac{1}{2}}\left(1 + h^{-\frac{d_x}{4}}( ||\xi_E^{n,\sharp}||_{0,\Omega_x}^{\frac{1}{2}} + ||\xi_B^{n,\sharp}||_{0,\Omega_x}^{\frac{1}{2}}) \right)\left(\int_{T_h^x}\int_{\mathcal{E}_v}  | (E_h^{n,\sharp} + v\times B_h^{n,\sharp})\cdot n_v| |[\xi_f^{n,\sharp}]_v|^2ds_v dx\right)^{\frac{1}{2}}     \nonumber\\
 &\leq
 Ch^{2k-d_x} (||\xi_{E}^{n,\sharp}||_{0,\Omega_x}^2  + ||\xi_{B}^{n,\sharp}||_{0,\Omega_x}^2 ) + C ||\xi_f^{n,\sharp}||_{0,\Omega}^2   + Ch^{2k+2} \nonumber\\
 &+
 C h^{2k+1}\left(1 + h^{-\frac{d_x}{2}}( ||\xi_E^{n,\sharp}||_{0,\Omega_x} + ||\xi_B^{n,\sharp}||_{0,\Omega_x}) \right) 
 +
 \frac{1}{16}\int_{T_h^x}\int_{\mathcal{E}_v}  | (E_h^{n,\sharp} + v\times B_h^{n,\sharp})\cdot n_v| |[\xi_f^{n,\sharp}]_v|^2ds_v dx.  \nonumber
\end{align*}
Since $h^{2k+1} h^{-\frac{d_x}{2}}( ||\xi_E^{n,\sharp}||_{0,\Omega_x} + ||\xi_B^{n,\sharp}||_{0,\Omega_x})
 \leq C ( h^{4k+2-d_x} + ||\xi_E^{n,\sharp}||_{0,\Omega_x}^2 + ||\xi_B^{n,\sharp}||_{0,\Omega_x}^2)
  \leq C(h^{2k+2}+ ||\xi_E^{n,\sharp}||_{0,\Omega_x}^2 + ||\xi_B^{n,\sharp}||_{0,\Omega_x}^2)$ and $h^ {2k - d_x} \leq 1$ for $k \geq \left \lceil \frac{d_x}{2} \right \rceil$, we further have
\beq
 \label{eq:3.2.2}
  a_{h,2}(\eta_f^{n,\sharp},E_h^{n,\sharp},B_h^{n,\sharp};\xi_f^{n,\sharp})
 \leq  Ch^{2k+1} + C \ene^{n,\sharp}+ \frac{1}{16}\int_{T_h^x}\int_{\mathcal{E}_v}  | (E_h^{n,\sharp} + v\times B_h^{n,\sharp})\cdot n_v| |[\xi_f^{n,\sharp}]_v|^2ds_v dx.
\eeq
Now with the results in \eqref{eq:3.2.3} and \eqref{eq:3.2.2}, we can conclude \eqref{eq:3.1.2_1}.

\bigskip
\noindent\underline{To get \eqref{eq:3.1.2_2_0} and \eqref{eq:3.1.2_2}}, note that
  with $f$ being smooth with compact support in $\Omega_v$,
  one has $[f^{n,\sharp}]_v = 0$, $\{f^{n,\sharp}\}_v = f^{n,\sharp}$
  for any $e\in \mathcal{E}_v$. Thus, applying the divergence theorem gives
  \begin{eqnarray*}
  & &a_{h,2}(f^{n,\sharp},E_h^{n,\sharp},B_h^{n,\sharp};g)  \nonumber\\
  &=&
  \int_\Omega f^{n,\sharp}(E_h^{n,\sharp}+v\times B_h^{n,\sharp})\cdot \nabla_v g dxdv - \int_{\Omega_x}\int_{\mathcal{E}_v} f^{n,\sharp}(E_h^{n,\sharp}+v\times B_h^{n,\sharp})\cdot [g]_v ds_v dx                                           \nonumber\\
  &=& -\int_\Omega \nabla_v f^{n,\sharp}\cdot (E_h^{n,\sharp}+v\times B_h^{n,\sharp}) g dxdv.     \nonumber
  \end{eqnarray*}
  Similarly,  $a_{h,2}(f^{n,\sharp},E^{n,\sharp},B^{n,\sharp};g) = -\int_\Omega \nabla_v f^{n,\sharp}\cdot (E^{n,\sharp}+v\times B^{n,\sharp}) g dxdv$.
 Therefore,
  \begin{eqnarray}
   \label{eq:3.3.1}
  & &|a_h(f^{n,\sharp}, E^{n,\sharp}, B^{n,\sharp}; g) - a_h(f^{n,\sharp}, E^{n,\sharp}_h, B^{n,\sharp}_h; g)|        \nonumber\\
  &=& |a_{h,2}(f^{n,\sharp}, E^{n,\sharp}, B^{n,\sharp}; g) - a_{h,2}(f^{n,\sharp}, E^{n,\sharp}_h, B^{n,\sharp}_h; g)|   \nonumber\\
  &=& |\int_\Omega \nabla_v f^{n,\sharp}\cdot (E_h^{n,\sharp}-  E^{n,\sharp}+v\times (B_h^{n,\sharp}- B^{n,\sharp})) g dxdv | \nonumber\\
  &\leq& ||\nabla_v f^{n,\sharp}||_{0,\infty,\Omega} (||e_E^{n,\sharp}||_{0,\Omega} + C||e_B^{n,\sharp}||_{0,\Omega}) ||g||_{0,\Omega}  \notag\\
    &\leq& C (||e_E^{n,\sharp}||_{0,\Omega} + C||e_B^{n,\sharp}||_{0,\Omega}) ||g||_{0,\Omega}.  
      \end{eqnarray}
  This gives \eqref{eq:3.1.2_2_0}. Taking $g=\xi_f^{n,\sharp}$, and with the approximation property in \eqref{eq:approx}, we further get \eqref{eq:3.1.2_2},
  \begin{eqnarray*}
  & &|a_h(f^{n,\sharp}, E^{n,\sharp}, B^{n,\sharp}; \xi_f^{n,\sharp}) - a_h(f^{n,\sharp}, E^{n,\sharp}_h, B^{n,\sharp}_h; \xi_f^{n,\sharp})| \\
  &\leq&  C(||\xi_E^{n,\sharp}||_{0,\Omega_x} + ||\xi_B^{n,\sharp}||_{0,\Omega_x}+||\eta_E^{n,\sharp}||_{0,\Omega_x}+||\eta_B^{n,\sharp}||_{0,\Omega_x})  ||\xi_f^{n,\sharp}||_{0,\Omega}  \leq C h^{2k+2} + C \ene^{n,\sharp}.
  \end{eqnarray*}

\subsection{Proof of Lemma \ref{lemma:3.5_7}}
\label{proof:lemma:3.5_7}

\underline{To get \eqref{eq:lem:3.5_7.1}}, based on the definition of $a_h$, approximation property in \eqref{eq:approx} and inverse inequality \eqref{eq:inv},

\begin{eqnarray*}
  & & |a_h(\eta_f^{n,r}, E^{n,s+1}_h, B^{n,s+1}_h; g) - a_h(\eta_f^{n,r}, E^{n,s}_h, B^{n,s}_h; g)|              \nonumber\\
  &=& |a_{h,2}(\eta_f^{n,r}, E^{n,s+1}_h, B^{n,s+1}_h; g) - a_{h,2}(\eta_f^{n,r}, E^{n,s}_h, B^{n,s}_h; g)|    \nonumber\\
  &\leq& |\int_\Omega \eta_f^{n,r} (E_h^{n,s+1}-  E_h^{n,s}+v\times (B_h^{n,s+1}- B_h^{n,s})) \nabla_v g dxdv |         \nonumber\\
  & &+ |\int_{T_h^x} \int_{\mathcal{E}_v} \{\eta_f^{n,r}\}_v (E_h^{n,s+1}-  E_h^{n,s}+v\times (B_h^{n,s+1}- B_h^{n,s})) \cdot [g]_v   ds_v dx|
   \nonumber\\
  & &+ \frac{1}{2} \int_{T_h^x} \int_{\mathcal{E}_v} | [\eta_f^{n,r}]_v| \left| |(E_h^{n,s+1} + v\times B_h^{n,s+1}) \cdot n_v| - |(E_h^{n,s} + v\times B_h^{n,s}) \cdot n_v| \right|
   |[g]_v|   ds_v dx      \nonumber\\
  &\leq& C \left( ||E_h^{n,s+1} - E_h^{n,s}||_{0,\infty,\Omega_x} + ||B_h^{n,s+1} - B_h^{n,s}||_{0,\infty,\Omega_x} \right) (||\eta_f^{n,r}||_{0,\Omega}
   ||\nabla_v g||_{0,\Omega}+||\eta_f^{n,r}||_{0,T_h^x \times \mathcal{E}_v}
   ||g||_{0,T_h^x\times \mathcal{E}_v})\\
     &\leq& C h^k ||g||_{0,\Omega} \left( ||E_h^{n,s+1} - E_h^{n,s}||_{0,\infty,\Omega_x} + ||B_h^{n,s+1} - B_h^{n,s}||_{0,\infty,\Omega_x} \right).
\end{eqnarray*}
 By the a priori assumption and equation \eqref{eq:exact_new}, we have
 \begin{eqnarray}
 \label{inequalityE}
  & & ||E_h^{n,s+1} - E_h^{n,s}||_{0,\infty,\Omega_x}             \nonumber\\
  &=& ||E_h^{n,s+1} - E^{n,s+1} + E^{n,s} - E_h^{n,s} + E^{n,s+1} - E^{n,s}||_{0,\infty,\Omega_x}       \nonumber\\
  &\leq& ||e_E^{n,s+1}||_{0,\infty,\Omega_x} + ||e_E^{n,s}||_{0,\infty,\Omega_x}
  + ||E^{n,s+1} - E^{n,s}||_{0,\infty,\Omega_x} \leq C(h+\tau).
 \end{eqnarray}
Similarly, $||B_h^{n,s+1} - B_h^{n,s}||_{0,\infty,\Omega_x} \leq C(h+\tau)$. 
Therefore,
\begin{equation*}
|a_h(\eta_f^{n,r}, E^{n,s+1}_h, B^{n,s+1}_h; g) - a_h(\eta_f^{n,r}, E^{n,s}_h, B^{n,s}_h; g)|
\leq C(1 + \frac{\tau}{h})h^{k+ 1}  ||g||_{0,\Omega}.
\end{equation*}


\bigskip
\noindent\underline{To get \eqref{eq:lem:3.5_7.2}},  we follow the similar lines as to prove \eqref{eq:lem:3.5_7.1} and obtain
\begin{eqnarray*}
  & & |a_h(\xi_f^{n,r}, E^{n,s+1}_h, B^{n,s+1}_h; g) - a_h(\xi_f^{n,r}, E^{n,s}_h, B^{n,s}_h; g)|          \nonumber\\
  &\leq& C \left( ||E_h^{n,s+1} - E_h^{n,s}||_{0,\infty,\Omega_x} + ||B_h^{n,s+1} - B_h^{n,s}||_{0,\infty,\Omega_x} \right) (||\xi_f^{n,r}||_{0,\Omega}
   ||\nabla_v g||_{0,\Omega} + ||\xi_f^{n,r}||_{0,T_h^x \times \mathcal{E}_v}
   ||g||_{0,T_h^x\times \mathcal{E}_v})              \nonumber\\
  &\leq& C(h+\tau)  \left( ||\xi_f^{n,r}||_{0,\Omega} ||\nabla_v g||_{0,\Omega} +
  ||\xi_f^{n,r}||_{0,T_h^x \times \mathcal{E}_v} ||g||_{0,T_h^x \times \mathcal{E}_v} \right)                   
   \leq
   C(1 + \frac{\tau}{h}) ||\xi_f^{n,r}||_{0,\Omega} ||g||_{0,\Omega}
\end{eqnarray*}
Here \eqref{inequalityE} is used for the second inequality, and the inverse inequality \eqref{eq:inv} is used for the third one.

\subsection{Proof of Lemma \ref{lemma:3.9}}
\label{proof:lemma:3.9}

 By definition, we have
  \begin{eqnarray}
  &&\mathcal{L}_{RK}(g)
  = 2\mathcal{L}(g) - \mathcal{K}(g) - \mathcal{J}(g)                  \nonumber\\
  &=& \int_{T_h}\frac{3\eta_f^{n+1} + 3\eta_f^n - 6\eta_f^{n,2} + 3T_f^n(x,v)}{\tau} g dx dv
    + 2a_h(f^{n,2}, E^{n,2}, B^{n,2}; g) - 2a_h(f_h^{n,2}, E_h^{n,2}, B_h^{n,2}; g)            \nonumber\\
  & &- \left( a_h(f^{n,1}, E^{n,1}, B^{n,1}; g) - a_h(f_h^{n,1}, E_h^{n,1}, B_h^{n,1}; g) \right)        
  -
  \left( a_h(f^{n}, E^{n}, B^{n}; g) - a_h(f_h^{n}, E_h^{n}, B_h^{n}; g) \right)      \nonumber\\
  &=& \Lambda_1+\Lambda_2+\Lambda_3+\Lambda_4,\notag
 \end{eqnarray}
 where
 \begin{equation*}
 \Lambda_1=\frac{1}{\tau} \int_{T_h}\left(3\eta_f^{n+1} + 3\eta_f^n - 6\eta_f^{n,2} + 3T_f^n(x,v)\right) g dx dv,\\
 \end{equation*}
   \begin{eqnarray*}
  \Lambda_2
   & = &2a_h(f^{n,2}, E^{n,2}, B^{n,2}; g) - 2a_h(f^{n,2}, E_h^{n,2}, B_h^{n,2}; g) \nonumber\\
  &&- \left( a_h(f^{n,1}, E^{n,1}, B^{n,1}; g) - a_h(f^{n,1}, E_h^{n,1}, B_h^{n,1}; g) \right)  
  -
  \left( a_h(f^{n}, E^{n}, B^{n}; g) - a(f^{n}, E_h^{n}, B_h^{n}; g) \right),      \nonumber\\
 \Lambda_{3}&=& -2a_h(\eta_f^{n,2}, E_h^{n,2},B_h^{n,2};g) + a_h(\eta_f^{n,1}, E_h^{n,1},B_h^{n,1};g) + a_h(\eta_f^{n}, E_h^{n},B_h^{n};g),\\
 \Lambda_{4}&=&   2a_h(\xi_f^{n,2}, E_h^{n,2},B_h^{n,2};g) - a_h(\xi_f^{n,1}, E_h^{n,1},B_h^{n,1};g) - a_h(\xi_f^{n}, E_h^{n},B_h^{n};g).
 \end{eqnarray*}
 The term $\Lambda_1$ can be estimated by applying Lemmas~\ref{lemma:1} and~\ref{lemma:2},
 \begin{equation*}
  |\Lambda_1|
  \leq  \frac{1}{\tau}
  \left( ||3\eta_f^{n+1} + 3\eta_f^n - 6\eta_f^{n,2}||_{0,\Omega} + 3||T_f^n(x,v)||_{0,\Omega} \right) ||g||_{0,\Omega}
  \leq C(h^{k+1} + \tau^3) ||g||_{0,\Omega}.
 \end{equation*}
 To estimate $\Lambda_2$, we apply \eqref{eq:3.1.2_2_0} in Lemma \ref{3.1}, the approximation property \eqref{eq:approx}, and obtain
 \begin{equation*}
 |\Lambda_2|\leq C \sum_{\sharp=0}^2 \left(||e_E^{n,\sharp}||_{0,\Omega_x} + ||e_B^{n,\sharp}||_{0,\Omega_x}\right) ||g||_{0,\Omega}
 \leq C \sum_{\sharp=0}^2 \left(||\xi_E^{n,\sharp}||_{0,\Omega_x} + ||\xi_B^{n,\sharp}||_{0,\Omega_x}+h^{k+1}\right) ||g||_{0,\Omega}.
 \end{equation*}
 For $\Lambda_3$, we first rewrite it as below.
 \begin{eqnarray*}
  \Lambda_{3}
  &=&
  -2a_h(\eta_f^{n,2} - \eta_f^{n,1}, E_h^{n,2} , B_h^{n,2};g ) -
  a_h(\eta_f^{n,1} - \eta_f^{n}, E_h^{n} , B_h^{n};g )                         \nonumber\\
  & &-
  2a_h(\eta_f^{n,1}, E_h^{n,2} , B_h^{n,2};g) +
  2a_h(\eta_f^{n,1}, E_h^{n,1} , B_h^{n,1};g )                                 \nonumber\\
  & &-
  a_h(\eta_f^{n,1}, E_h^{n,1} , B_h^{n,1};g) +
  a_h(\eta_f^{n,1}, E_h^{n} , B_h^{n};g ).
 \end{eqnarray*}
 Applying Lemma~\ref{lemma:2} to the first line, and Lemma~\ref{lemma:3.5_7} - (1) to the second and the third lines,  one has
 \begin{equation*}
  \Lambda_{3} \leq C(1+\frac{\tau}{h}) h^{k+1} ||g||_{0,\Omega}.
 \end{equation*}
 For $\Lambda_4$, recall $G_2^n = 2\xi_f^{n,2} - \xi_f^{n,1} -\xi_f^{n}$, and using Lemma~\ref{lemma:3.5_7} - (2), we have
 \begin{eqnarray*}
  \Lambda_{4}
  &=&
    \left( 2a_h(\xi_f^{n,2}, E_h^{n,2}, B_h^{n,2}; g) - 2a_h(\xi_f^{n,2}, E_h^{n,1}, B_h^{n,1}; g) \right)
   + a_h(G_2^n, E_h^{n,1}, B_h^{n,1}; g)  \nonumber\\
  & &+
  \left( a_h(\xi_f^{n}, E_h^{n,1}, B_h^{n,1}; g) - a_h(\xi_f^{n}, E_h^{n}, B_h^{n}; g) \right)\notag\\
 &\leq&   C(1+\frac{\tau}{h}) \left( ||\xi_f^{n,2}||_{0,\Omega} +||\xi_f^{n}||_{0,\Omega}\right) ||g||_{0,\Omega}+ a_h(G_2^n, E_h^{n,1}, B_h^{n,1}; g).
 \end{eqnarray*}
 Now we combine the estimates for $\Lambda_i, i=1,\cdots 4$, and can get \eqref{eq:lem:3.9_1},
 \begin{align*}
 \mathcal{L}_{RK}(g) \leq
 C\left((1+\frac{\tau}{h})\sum_{\sharp=0}^{2} (||\xi_E^{n,\sharp}||_{0,\Omega_x} + ||\xi_B^{n,\sharp}||_{0,\Omega_x} + ||\xi_f^{n,\sharp}||_{0,\Omega}+h^{k+1}) +\tau^3 \right) ||g||_{0,\Omega} +a_h(G_2^n, E_h^{n,1}, B_h^{n,1}; g).
 \end{align*}
 To further bound the last term and hence obtain \eqref{eq:lem:3.9_2},
we apply the inverse inequality \eqref{eq:inv} and $||E_h^{n,1}||_{0,\infty,\Omega_x} + ||B_h^{n,1}||_{0,\infty,\Omega_x}$ being bounded  by
\eqref{eq:lem:6:proof}-\eqref{eq:lem:6:proof:1}, and get
 \begin{eqnarray*}
  &&\left|a_h(G_2^n, E_h^{n,1}, B_h^{n,1}; g) \right|
  \leq C(||G_2^n||_{0,\Omega} ||\nabla_x g||_{0,\Omega}+
   ||G_2^n||_{0,T_h^v \times \mathcal{E}_x} ||g||_{0,T_h^v \times \mathcal{E}_x})\notag\\
  &&\;\;\;\;+ C \left( ||E_h^{n,1}||_{0,\infty,\Omega_x} + ||B_h^{n,1}||_{0,\infty,\Omega_x}  \right)
  \left( ||G_2^n||_{0,\Omega} ||\nabla_v g||_{0,\Omega}  +
  ||G_2^n||_{0,T_h^x \times \mathcal{E}_v}
  ||g||_{0,T_h^x \times \mathcal{E}_v}  \right) \\
  &&\leq
  \frac{C}{h} ||G_2^n||_{0,\Omega} ||g||_{0,\Omega}.
 \end{eqnarray*}

 The estimate for $\mathcal{K}_{RK}(g)$ in  \eqref{eq:lem:3.9_3} can be proved very similarly.

\subsection{Proof of Lemma \ref{lemma:3.8}}
\label{proof:lemma:3.8}

 First we consider $a_{h,1}(G_1^n; G_2^n) + a_{h,1}(G_2^n; G_1^n)$.
 \begin{eqnarray}
  & &a_{h,1}(G_1^n; G_2^n) + a_{h,1}(G_2^n; G_1^n)                              \nonumber\\
  &=&
  \int_{\Omega} ( G_1^n v\cdot \nabla_x G_2^n + G_2^n v\cdot \nabla_x G_1^n ) dxdv
  - \int_{T_h^v} \int_{\mathcal{E}_x} (v\{G_1^n\}_x + \frac{|v\cdot n_x|}{2}[G_1^n]_x )\cdot [G_2^n]_x ds_x dv       \nonumber\\
  & &
  - \int_{T_h^v} \int_{\mathcal{E}_x} (v\{G_2^n\}_x + \frac{|v\cdot n_x|}{2}[G_2^n]_x )\cdot [G_1^n]_x ds_x dv       \nonumber\\
  &=& \int_{T_h^v}\int_{\mathcal{E}_x} v\cdot \left( [G_1^n G_2^n]_x - \{G_1^n\}_x [G_2^n]_x - \{G_2^n\}_x [G_1^n]_x \right) ds_x dv      
     - \int_{T_h^v}\int_{\mathcal{E}_x} |v\cdot n_x| [G_1^n]_x \cdot [G_2^n]_x ds_x dv                    \nonumber\\
  &=& - \int_{T_h^v}\int_{\mathcal{E}_x} |v\cdot n_x| [G_1^n]_x \cdot [G_2^n]_x ds_x dv.
 \end{eqnarray}
 The third equality above is due to $\eqref{eq:jump3}$.
 Hence,
 \begin{eqnarray}
 \label{eq:3.8.2}
 & &|a_{h,1}(G_1^n; G_2^n) + a_{h,1}(G_2^n; G_1^n)|
 \leq \int_{T_h^v}\int_{\mathcal{E}_x} |v\cdot n_x| \left( \frac{1}{32} |[G_1^n]_x|^2 +  8 |[G_2^n]_x|^2 \right) ds_x dv               \nonumber\\
 &\leq& \frac{1}{16} \int_{T_h^v}\int_{\mathcal{E}_x} |v\cdot n_x| \left( |[\xi_f^n]_x|^2 +  |[\xi_f^{n,1}]_x|^2 \right) ds_x dv + \frac{C}{h}||G_2^n||_{0,\Omega}^2.
 \end{eqnarray}
 The last inequality in \eqref{eq:3.8.2} is satisfied because of the inequality $|[G_1^n]_x|^2 = |[\xi_f^{n,1} - \xi_f^{n}]_x|^2 \leq 2(|[\xi_f^n]_x|^2 +  |[\xi_f^{n,1}]_x|^2)$
 and the inverse inequality in $\eqref{eq:inv}$.
 Similarly, we can also show
 \begin{eqnarray}
  \label{eq:3.8.1}
 & &\left|a_{h,2}(G_1^n, E_h^{n,1}, B_h^{n,1}; G_2^n) + a_{h,2}(G_2^n, E_h^{n,1}, B_h^{n,1}; G_1^n)\right|               \nonumber\\
 &=& \int_{T_h^x} \int_{\mathcal{E}_v} |(E_h^{n,1} + v \times B_h^{n,1})\cdot n_v| [G_1^n]_v \cdot [G_2^n]_v ds_v dx        \nonumber\\
 &\leq&  \int_{T_h^x} \int_{\mathcal{E}_v} |(E_h^{n,1} + v \times B_h^{n,1})\cdot n_v| |[\xi_f^{n,1}]_v|  |[G_2^n]_v| ds_v dx     \nonumber\\
 & &+ \int_{T_h^x} \int_{\mathcal{E}_v} |(E_h^{n,1} + v \times B_h^{n,1})\cdot n_v| |[\xi_f^{n}]_v|  |[G_2^n]_v| ds_v dx.
 \end{eqnarray}
 Denote each term on the right side of $\eqref{eq:3.8.1}$ as $\Lambda_1$ and $\Lambda_2$, respectively,
 then
 \begin{eqnarray}
 \label{eq:3.8.3}
  \Lambda_1 &\leq&  \int_{T_h^x} \int_{\mathcal{E}_v} |(E_h^{n,1} + v \times B_h^{n,1})\cdot n_v| \left( \frac{1}{16}|[\xi_f^{n,1}]_v|^2 +   4|[G_2^n]_v|^2 \right) ds_v dx  \nonumber\\
  &\leq& \frac{1}{16} \int_{T_h^x} \int_{\mathcal{E}_v} |(E_h^{n,1} + v \times B_h^{n,1})\cdot n_v| |[\xi_f^{n,1}]_v|^2 ds_v dx    
    +
  C ||E_h^{n,1} + v \times B_h^{n,1}||_{0,\infty,\Omega_x} \frac{||G_2^n||_{0,\Omega}^2 }{h}       \nonumber\\
  &\leq& \frac{1}{16} \int_{T_h^x} \int_{\mathcal{E}_v} |(E_h^{n,1} + v \times B_h^{n,1})\cdot n_v| |[\xi_f^{n,1}]_v|^2 ds_v dx  +
   \frac{C}{h} ||G_2^n||_{0,\Omega}^2 ,
 \end{eqnarray}
 here  we have used the fact that $||E_h^{n,1} + v \times B_h^{n,1}||_{0,\infty,\Omega_x} \leq C$  due to \eqref{eq:lem:6:proof} - \eqref{eq:lem:6:proof:1}.
 For the estimate of $\Lambda_2$,
 \begin{eqnarray}
  \Lambda_2
   &\leq&
  \int_{T_h^x} \int_{\mathcal{E}_v} \left|E_h^{n,1}-E_h^n + v \times (B_h^{n,1} - B_h^n )\right|  \left|[\xi_f^{n}]_v\right|  \left|[G_2^n]_v\right|  ds_v dx  \nonumber\\
  & &+  \int_{T_h^x} \int_{\mathcal{E}_v} \left|(E_h^{n} + v \times B_h^{n})\cdot n_v\right| \left|[\xi_f^{n}]_v\right| \left|[G_2^n]_v\right| ds_v dx          \nonumber\\
  &\leq&
  C \left( ||E_h^{n,1}-E_h^{n}||_{0,\infty,\Omega_x} +
   ||B_h^{n,1}-B_h^{n}||_{0,\infty,\Omega_x} \right)
    ||\xi_f^n||_{0,T_h^x \times \mathcal{E}_v}
    ||G_2^n||_{0,T_h^x \times \mathcal{E}_v}                     \nonumber\\
  & &+ \frac{1}{16} \int_{T_h^x} \int_{\mathcal{E}_v} \left|(E_h^{n} + v \times B_h^{n})\cdot n_v\right| \left|[\xi_f^{n}]_v\right|^2 ds_v dx  +
   \frac{C}{h}  ||G_2^n||_{0,\Omega}^2 .
 \end{eqnarray}
 As implied in the proof of Lemma~\ref{lemma:3.5_7}, $||E_h^{n,1}-E_h^{n}||_{0,\infty,\Omega_x} \leq C(h+\tau)$.
 So we further have
 \begin{eqnarray}
 \label{eq:3.8.4}
  \Lambda_2
  &\leq&
  C \left( 1 + \frac{\tau}{h}  \right)
    ||\xi_f^n||_{0,\Omega}    ||G_2^n||_{0,\Omega}   + \frac{C}{h}  ||G_2^n||_{0,\Omega}^2            
     + \frac{1}{16} \int_{T_h^x} \int_{\mathcal{E}_v} \left|(E_h^{n} + v \times B_h^{n})\cdot n_v\right| \left|[\xi_f^{n}]_v\right|^2 ds_v dx        \nonumber\\
  &\leq&   C \left( 1 + \frac{\tau}{h}  \right) ||\xi_f^n||_{0,\Omega}^2 +
   C \left( 1 + \frac{\tau}{h} + \frac{1}{h} \right) ||G_2^n||_{0,\Omega}^2               
   + \frac{1}{16} \int_{T_h^x} \int_{\mathcal{E}_v} \left|(E_h^{n} + v \times B_h^{n})\cdot n_v\right| \left|[\xi_f^{n}]_v\right|^2 ds_v dx. \notag\\
 \end{eqnarray}
 Note that $1, \frac{\tau}{h} \leq \frac{1}{h}$, hence $1 + \frac{\tau}{h} + \frac{1}{h} \leq \frac{C}{h}$. We now combine
 \eqref{eq:3.8.2}, \eqref{eq:3.8.3} and \eqref{eq:3.8.4} and conclude this lemma.

\subsection{Proof of Lemma \ref{lemma:new4.1}}
We only consider the case when $\sharp = 0$. The proof for other cases follows the same line.
For part $(i)$, with divergence theorem and equality \eqref{eq:jump2},

\begin{align*}
&b_h(\xi_E^n, \xi_B^n, \xi_f^n; \xi_E^n, \xi_B^n)\\
&=\int_{\Omega_x} \xi_B^n \cdot (\nabla \times \xi_E^n) dx -
\int_{\Omega_x} \xi_E^n \cdot (\nabla \times \xi_B^n) dx -
\int_{\Omega_x}(\int_{\Omega_v} v\xi_f^n dv) \cdot \xi_E^n dx \\
 &\;\;\;+
\int_{\mathcal{E}_x} \left( \{\xi_B^n\}_x - \frac{1}{2}[\xi_E^n]_{tan} \right) \cdot [\xi_E^n]_{tan}ds_x -
\int_{\mathcal{E}_x} \left( \{\xi_E^n\}_x + \frac{1}{2}[\xi_B^n]_{tan} \right) \cdot [\xi_B^n]_{tan}ds_x \\
&=
\int_{\mathcal{E}_x} \left( [\xi_E^n \times \xi_B^n]_x + \{\xi_B^n\}_x [\xi_E^n]_{tan} - \{\xi_E^n\}_x [\xi_B^n]_{tan}  \right)ds_x -\int_{\Omega_x}(\int_{\Omega_v} v\xi_f^n dv) \cdot \xi_E^n dx  -
\frac{1}{2} \stab_{EB}^n \\
&=-\int_{\Omega_x}(\int_{\Omega_v} v\xi_f^n dv) \cdot \xi_E^n dx  -
\frac{1}{2} \stab_{EB}^n
 \leq C(||\xi_f^n||_{0,\Omega}^2 + ||\xi_E^n||_{0,\Omega_x}^2) -
\frac{1}{2} \stab_{EB}^n.
\end{align*}

For part $(ii)$,
since $\nabla \times \xi_E^n \in \mathcal{U}_h^k$, there is $\int_{\Omega_x} \eta_B^n \cdot (\nabla \times \xi_E^n) dx = 0$. Similarly,
$\int_{\Omega_x} \eta_E^n \cdot (\nabla \times \xi_B^n) dx = 0$. Then
\begin{align*}
&|b_h(\eta_E^n, \eta_B^n, \eta_f^n; \xi_E^n, \xi_B^n)|\\
&= |\int_{\Omega_x}(\int_{\Omega_v} v\eta_f^n dv) \cdot \xi_E^n dx+
\int_{\mathcal{E}_x} \left( \{\eta_B^n\}_x - \frac{1}{2}[\eta_E^n]_{tan} \right) \cdot [\xi_E^n]_{tan}ds_x
-
\int_{\mathcal{E}_x} \left( \{\eta_E^n\}_x + \frac{1}{2}[\eta_B^n]_{tan} \right) \cdot [\xi_B^n]_{tan}ds_x|\\
 &\leq C ||\eta_f^n||_{0,\Omega}||\xi_E^n||_{0,\Omega_x}+
 C(||\eta_E^n||_{0,\mathcal{E}_x} +  ||\eta_B^n||_{0,\mathcal{E}_x})
 \left( \int_{\mathcal{E}_x} |[\xi_B^n]_{tan}|^2 + |[\xi_E^n]_{tan}|^2 ds_x \right)^{\frac{1}{2}} \\
& \leq
 Ch^{2k+1} + C||\xi_E^n||^2_{0,\mathcal{E}_x}+
 \frac{1}{16} \stab_{EB}^n.
\end{align*}

\subsection{Proof of Lemma \ref{lemma:4.4}}
\label{proof:lemma:4.4}

Based on definitions of $\mathcal{S}_{RK}(U, V)$, $X_2^n, Z_2^n$, and $G_2^n$, in addition to  Lemma \ref{lemma:1} and Lemma \ref{lemma:2} - (1), we have
 \begin{eqnarray}
  \label{eq:M15}
  & & \mathcal{S}_{RK}(U, V) =2 \mathcal{S}(U, V) - \mathcal{R}(U, V) -  \mathcal{Q}(U, V) \nonumber\\
  &\leq&
  \left( \frac{3\eta_E^{n+1} -6\eta_E^{n,2} + 3\eta_E^{n} + 3T_E^n(x) }{\tau}, U \right)_{\Omega_x}
   +
  \left( \frac{3\eta_B^{n+1} -6\eta_B^{n,2} + 3\eta_B^{n} + 3T_B^n(x) }{\tau}, V \right)_{\Omega_x} \nonumber\\
  && + b_h(2e_E^{n,2}-e_E^{n,1}-e_E^n, 2e_B^{n,2}-e_B^{n,1}-e_B^n, 2e_f^{n,2}-e_f^{n,1}-e_f^n; U, V)
     \nonumber\\
  &\leq &
   C(h^{k+1} + \tau^3)(||U||_{0,\Omega_x} + ||V||_{0,\Omega_x})
   + b_h(X_2^n, Z_2^n, G_2^n; U, V) - b_h(d_E^n, d_B^n, d_f^n; U, V),
 \end{eqnarray}
  where $d_\star^n = 2\eta_\star^{n,2}-\eta_\star^{n,1}-\eta_\star^n$ with $\star=E, B, f$.
 We further apply \eqref{eq:2.2.1_4} in Lemma \ref{lemma:2}, and this gives \eqref{eq:lem4:4_1},
 \begin{eqnarray}
    \mathcal{S}_{RK}(U, V)
   \leq
      C(h^{k+1} + \tau h^k+ \tau^3)(||U||_{0,\Omega_x} + ||V||_{0,\Omega_x})
   + b_h(X_2^n, Z_2^n, G_2^n; U, V).
 \end{eqnarray}
To further obtain \eqref{eq:lem4:4_2}, we need to estimate $b_h(X_2^n, Z_2^n, G_2^n; U, V)$.
  \begin{eqnarray}
    & & b_h( X_2^n, Z_2^n, G_2^n; U, V)  \nonumber\\
    &=&
      \int_{\Omega_x} Z_2^n \cdot (\nabla \times U) dx -  \int_{\Omega_x} X_2^n \cdot (\nabla \times V) dx
      - \int_{\Omega_x} (\int_{\Omega_v} G_2^n v dv) \cdot U dx
      \nonumber\\
    & &+
      \int_{\mathcal{E}_x} \left( \{Z_2^n \}_x - \frac{1}{2}[ X_2^n ]_{tan}\right) \cdot [U]_{tan} ds_x
       -
      \int_{\mathcal{E}_x} \left( \{X_2^n \}_x + \frac{1}{2}[ Z_2^n ]_{tan}\right) \cdot [V]_{tan} ds_x   \nonumber\\
    &\leq&
      ||Z_2^n||_{0,\Omega_x} ||\nabla \times U ||_{0,\Omega_x} + ||X_2^n||_{0,\Omega_x} ||\nabla \times V ||_{0,\Omega_x} + C||G_2^n||_{0,\Omega} ||U||_{0,\Omega_x}
      \nonumber\\
    & &+
      C( ||Z_2^n||_{0,\mathcal{E}_x} + ||X_2^n||_{0,\mathcal{E}_x} ) ( ||U||_{0,\mathcal{E}_x} + ||V||_{0,\mathcal{E}_x} ).
  \end{eqnarray}
Now we can apply inverse equalities in \eqref{eq:inv}, get
  $$ b_h(X_2^n, Z_2^n, G_2^n; U, V)
                \leq
               \frac{C}{h}(||Z_2^n||_{0,\Omega_x} + ||X_2^n||_{0,\Omega_x})(||U||_{0,\Omega_x} + ||V||_{0,\Omega_x}) + C||G_2^n||_{0,\Omega} ||U||_{0,\Omega_x},
$$
hence \eqref{eq:lem4:4_2}.

Similarly, one can get the estimate \eqref{eq:lem4:4_3} for $\mathcal{R}_{RK}(U, V)$.

\subsection{Proof of Lemma \ref{lemma:new4.2}}

From the definition of $b_h$,
  \begin{eqnarray}
     & & b_h( X_1^n, Z_1^n, G_1^n; X_2^n, Z_2^n) + b_h( X_2^n, Z_2^n, G_2^n; X_1^n, Z_1^n)  \nonumber\\
     &=&
        \int_{\Omega_x} \left( Z_1^n \cdot (\nabla \times X_2^n) dx -   X_1^n \cdot (\nabla \times Z_2^n)  +
                               Z_2^n \cdot (\nabla \times X_1^n) dx -   X_2^n \cdot (\nabla \times Z_1^n) \right) dx   \nonumber\\
     & &+
        \int_{\mathcal{E}_x} \left( \{Z_1^n \}_x - \frac{1}{2}[ X_1^n ]_{tan}\right) \cdot [X_2^n]_{tan} ds_x
        -
        \int_{\mathcal{E}_x} \left( \{X_1^n \}_x + \frac{1}{2}[ Z_1^n ]_{tan}\right) \cdot [Z_2^n]_{tan} ds_x   \nonumber\\
     & &+
        \int_{\mathcal{E}_x} \left( \{Z_2^n \}_x - \frac{1}{2}[ X_2^n ]_{tan}\right) \cdot [X_1^n]_{tan} ds_x
        -
        \int_{\mathcal{E}_x} \left( \{X_2^n \}_x + \frac{1}{2}[ Z_2^n ]_{tan}\right) \cdot [Z_1^n]_{tan} ds_x   \nonumber\\
     & &-
        \int_{\Omega_x} \left( \int_{\Omega_v} G_1^n v dv \right) \cdot X_2^n dx
        - \int_{\Omega_x} \left( \int_{\Omega_v} G_2^n v dv \right) \cdot X_1^n dx.\notag
  \end{eqnarray}
Using divergence theorem and equality $\eqref{eq:jump2}$, in addition to inverse inequality \eqref{eq:inv} and Young's inequality,
  we can simplify the inequality above as
 \begin{eqnarray}
   & & b_h( X_1^n, Z_1^n, G_1^n; X_2^n, Z_2^n) + b_h( X_2^n, Z_2^n, G_2^n; X_1^n, Z_1^n)  \nonumber\\
   &=&
   -\int_{\mathcal{E}_x} \left(  [X_1^n]_{tan} \cdot [X_2^n]_{tan} + [Z_1^n]_{tan} \cdot [Z_2^n]_{tan} \right)  ds_x
   -\int_{\Omega} \left(  G_1^n v \cdot X_2^n +  G_2^n v \cdot X_1^n \right) dx dv  \notag\\
  &\leq&
  \int_{\mathcal{E}_x} \left(
  \frac{1}{32}  |[X_1^n]_{tan}|^2 +
  8 |[X_2^n]_{tan}|^2 +
  \frac{1}{32}  |[Z_1^n]_{tan}|^2 +
  8 |[Z_2^n]_{tan}|^2  \right) ds_x
   + C \sum_{j=1}^2 ( ||G_j^n||_{0,\Omega}^2 + ||X_j^n||_{0,\Omega_x}^2 )
  \nonumber\\
  &\leq&
  \frac{C}{h}( ||X_2^n||_{0,\Omega_x}^2 + ||Z_2^n||_{0,\Omega_x}^2)
  +
  C \sum_{j=1}^2 ( ||G_j^n||_{0,\Omega}^2 + ||X_j^n||_{0,\Omega_x}^2 )+\frac{1}{16} \left( \stab_{EB}^n + \stab_{EB}^{n,1} \right).
  \label{eq:M1000}
\end{eqnarray}
 The last inequality of \eqref{eq:M1000} is due to that
 \begin{equation*}
 |[Z_1^n]_{tan}|^2 = |[\xi_B^{n,1} - \xi_B^{n}]_{tan}|^2 \leq 2|[\xi_B^{n,1}]_{tan}|^2 +  2 |[\xi_B^{n}]_{tan}|^2,
 \end{equation*}
 and a similar bound for $|[X_1^n]_{tan}|^2$.

\subsection{Proof of Lemma \ref{lemma:5.1}}
\label{proof:lemma:5.1}
  For any $g \in \mathcal{G}_h^k$, we first consider
  $a_h(f^{n,\sharp}, E^{n,\sharp}, B^{n,\sharp}; g) - a_h(f_h^{n,\sharp}, E_h^{n,\sharp}, B_h^{n,\sharp}; g)$, $\sharp=0, 1, 2$,
  \begin{eqnarray}
   & &a_h(f^{n,\sharp}, E^{n,\sharp}, B^{n,\sharp}; g) - a_h(f_h^{n,\sharp}, E_h^{n,\sharp}, B_h^{n,\sharp}; g) \nonumber\\
   &=&
    -a_h(\eta_f^{n,\sharp}, E_h^{n,\sharp}, B_h^{n,\sharp}; g) +
    ( a_h(f^{n,\sharp}, E^{n,\sharp}, B^{n,\sharp}; g) - a_h(f^{n,\sharp}, E_h^{n,\sharp}, B_h^{n,\sharp}; g) ) 
    +a_h(\xi_f^{n,\sharp}, E_h^{n,\sharp}, B_h^{n,\sharp}; g). \nonumber
  \end{eqnarray}
 With Cauchy-Schwarz inequality, approximation property in \eqref{eq:approx},  inverse inequality \eqref{eq:inv}, boundedness of  $||E_h^{n,1}||_{0,\infty,\Omega_x} + ||B_h^{n,1}||_{0,\infty,\Omega_x}$ in \eqref{eq:lem:6:proof} and \eqref{eq:lem:6:proof:1}, we have
 \begin{eqnarray}
  &&\left|a_h(\eta_f^{n,\sharp}, E_h^{n,\sharp}, B_h^{n,\sharp}; g)\right|
  \leq C(||\eta_f^{n,\sharp}||_{0,\Omega}||\nabla_x g||_{0,\Omega} +
   ||\eta_f^{n,\sharp}||_{0,T_h^v \times \mathcal{E}_x}
   ||g||_{0,T_h^v \times \mathcal{E}_x})\notag\\
  &&\;\;\;\; +C(||E_h^{n,\sharp}||_{0,\infty,\Omega_x} + ||B_h^{n,\sharp}||_{0,\infty,\Omega_x} )
    (||\eta_f^{n,\sharp}||_{0,\Omega} ||\nabla_v g||_{0,\Omega}+||\eta_f^{n,\sharp}||_{0,T_h^v \times \mathcal{E}_x}
   ||g||_{0,T_h^v \times \mathcal{E}_x})\notag \\
   &&\leq C h^k ||g||_{0,\Omega} ,
   \label{eq:VM3}
 \end{eqnarray}
 \begin{eqnarray}
  &&\left|a_h(\xi_f^{n,\sharp}, E_h^{n,\sharp}, B_h^{n,\sharp}; g)\right|
  \leq C(||\xi_f^{n,\sharp}||_{0,\Omega}||\nabla_x g||_{0,\Omega} +
   ||\xi_f^{n,\sharp}||_{0,T_h^v \times \mathcal{E}_x}
   ||g||_{0,T_h^v \times \mathcal{E}_x})\notag\\
  &&\;\;\;\; +C(||E_h^{n,\sharp}||_{0,\infty,\Omega_x} + ||B_h^{n,\sharp}||_{0,\infty,\Omega_x} )
    (||\xi_f^{n,\sharp}||_{0,\Omega} ||\nabla_v g||_{0,\Omega}+||\xi_f^{n,\sharp}||_{0,T_h^v \times \mathcal{E}_x}
   ||g||_{0,T_h^v \times \mathcal{E}_x}) \notag \\
   &&\leq \frac{C}{h} ||\xi_f^{n,\sharp}||_{0,\Omega}  ||g||_{0,\Omega}.
   \label{eq:VM4}
 \end{eqnarray}

  Following the derivation to get \eqref{eq:3.3.1} in Lemma~\ref{3.1}, we have
  \begin{eqnarray}
  \label{eq:VM5}
   & &a_h(f^{n,\sharp}, E^{n,\sharp}, B^{n,\sharp}; g) - a_h(f^{n,\sharp}, E_h^{n,\sharp}, B_h^{n,\sharp}; g) \nonumber\\
   &=&
   \int_{\Omega} \nabla_v f^{n,\sharp} \cdot
   \left(E_h^{n,\sharp} - E^{n,\sharp}  + v\times (B_h^{n,\sharp} -
   B^{n,\sharp})\right) g dx dv  \nonumber\\
   &\leq&
   C(||e_E^{n,\sharp}||_{0,\Omega_x} + ||e_B^{n,\sharp}||_{0,\Omega_x} )
   ||g||_{0,\Omega}
   \leq
   C(||\xi_E^{n,\sharp}||_{0,\Omega_x} + ||\xi_B^{n,\sharp}||_{0,\Omega_x}
   + h^{k+1})
   ||g||_{0,\Omega}.
  \end{eqnarray}
  Equations \eqref{eq:VM3} - \eqref{eq:VM5} lead to
  \begin{eqnarray}
  \label{eq:VM7}
   & &a_h(f^{n,\sharp}, E^{n,\sharp}, B^{n,\sharp}; g) - a_h(f_h^{n,\sharp}, E_h^{n,\sharp}, B_h^{n,\sharp}; g) \nonumber\\
   &\leq&
   C(||\xi_E^{n,\sharp}||_{0,\Omega_x} + ||\xi_B^{n,\sharp}||_{0,\Omega_x}
   + h^{k})
   ||g||_{0,\Omega}
   +
   \frac{C}{h} ||\xi_f^{n,\sharp}||_{0,\Omega}  ||g||_{0,\Omega}.
  \end{eqnarray}
  Now based on \eqref{eq:VM7} and Lemma~\ref{lemma:2}, we can bound $\mathcal{J}(\xi_f^{n,1})$.
  \begin{eqnarray}
  \label{eq:VM6}
   \mathcal{J}(\xi_f^{n,1})
   &=&
   \left( \frac{\eta_f^{n,1} - \eta_f^n}{\tau}, \xi_f^{n,1}  \right)_{\Omega}
   + a_h(f^{n}, E^{n}, B^{n}; \xi_f^{n,1}) - a_h(f_h^{n,}, E_h^{n}, B_h^{n}; \xi_f^{n,1})
   \nonumber\\
   &\leq&
      C(||\xi_E^{n}||_{0,\Omega_x} + ||\xi_B^{n}||_{0,\Omega_x}
   + h^{k})
   ||\xi_f^{n,1}||_{0,\Omega}
   +
   \frac{C}{h} ||\xi_f^{n}||_{0,\Omega}  ||\xi_f^{n,1}||_{0,\Omega}.
  \end{eqnarray}
   Based on equations $\eqref{eq:VM6}$ and $\eqref{eq:err1}$, we get
   \begin{eqnarray}
    ||\xi_f^{n,1}||_{0,\Omega}^2
    &=&
    (\xi_f^{n,1}, \xi_f^{n}) _{\Omega}
    + \tau {\mathcal J}(\xi_f^{n,1})  \nonumber\\
    &\leq&
    C\tau(||\xi_E^{n}||_{0,\Omega_x} + ||\xi_B^{n}||_{0,\Omega_x}
    + h^{k})
    ||\xi_f^{n,1}||_{0,\Omega}
    +
    (1+C\frac{\tau}{h}) ||\xi_f^{n}||_{0,\Omega}  ||\xi_f^{n,1}||_{0,\Omega},
   \nonumber
   \end{eqnarray}
   Cancel $||\xi_f^{n,1}||_{0,\Omega}$ from both sides of the inequality, and take the square of both sides,
   we have
   \begin{eqnarray}
    ||\xi_f^{n,1}||_{0,\Omega}^2
    &\leq&
    C\tau^2(||\xi_E^{n}||_{0,\Omega_x}^2 + ||\xi_B^{n}||_{0,\Omega_x}^2
    + h^{2k})
    +
    (1+C\frac{\tau}{h})^2 ||\xi_f^{n}||_{0,\Omega}^2.
   \end{eqnarray}
   By further using $\tau \leq \alpha h$, we obtain the upper bound of $||\xi_f^{n,1}||_{0,\Omega}^2$.
   Similarly,  $||\xi_f^{n,2}||_{0,\Omega}^2$ can be estimated based on \eqref{eq:VM7} and \eqref{eq:err1}.
   Likewise, we can establish the estimates for $||\xi_B^{n,\sharp}||_{0,\Omega_x}, ||\xi_E^{n,\sharp}||_{0,\Omega_x}$ for $\sharp = 1,2$.

\section{Extension and Conclusion}
\label{sec:5}

In this paper we prove the error estimates of fully discrete methods, which involve a third order Runge-Kutta time discretization and
upwind DG discretizations of
arbitrary order of accuracy in phase domain, for solving the  Vlasov-Maxwell system.
When the exact solutions have enough regularity, we show that the $L^2$ errors of the numerical solutions by such methods are
of $O(h^{k+\frac{1}{2}} + \tau^3)$ for $ k \geq \left \lceil \frac{d_x + 1}{2} \right \rceil$.
The third order Runge-Kutta time integration contributes to the error $O(\tau^3)$, while the error from the DG approximation
is $O(h^{k+\frac{1}{2}})$, which is expected for  hyperbolic systems with upwind numerical fluxes on general meshes.

The techniques used in this paper can be applied to the RKDG methods which involve other numerical fluxes, such as  central or alternating fluxes,
\begin{align*}
(\widehat{n_x\times E_h}, \widehat{n_x\times B_h})
&=(n_x\times \{E_h\}, n_x\times \{B_h\}),\qquad \textrm{(central)}\\
(\widehat{n_x\times E_h}, \widehat{n_x\times B_h})&=n_x\times (E_h^-, B_h^+), \textrm{or}\; n_x\times (E_h^+, B_h^-),\qquad\textrm{(alternating)},
\end{align*}
in the Maxwell solver. It was shown that these fluxes will result better energy conservation in semi-discrete DG methods \cite{CGLM}. On the other hand,
for RKDG methods with such fluxes, the stabilization mechanism $\stab_{EB}^{n,\sharp}$, $\sharp=0,1,2$,
in the form of  the tangential jump
$$
   \int_{\mathcal{E}_x} |[\xi_E^{n,\sharp}]_{tan}|_{0,\Omega_x}^2 + |[\xi_B^{n,\sharp}]_{tan}|_{0,\Omega_x}^2 ds_x
$$
 is no longer available from the Maxwell solver (see part $(i)$ of Lemma \ref{lemma:new4.1}), and this will lead to a sub-optimal $L^2$-norm error estimate: $C h^k+C\tau^3$.
With some insignificant modification to the details, almost the same error estimates can be established for the RKDG methods solving the
 smooth solutions of the relativistic Vlasov-Maxwell system of one species \cite{SS2010, YangLi:RVM},
\begin{equation*}
 \left\{
 \begin{aligned}
 &\partial_t f + \frac{v}{\sqrt{1+|v|^2}}\cdot \nabla_x f + (E+\frac{v}{\sqrt{1+|v|^2}}\times B)\cdot \nabla_v f = 0, \\
 &\partial_t E = \nabla \times B -J, \quad \partial_t B = -\nabla \times E,\\
 & \nabla \cdot E = \rho - \rho_i, \quad \nabla \cdot B = 0,
 \end{aligned}
 \right.
\end{equation*}
with
\begin{equation*}
  \rho(x,t)=\int_{\Omega_v}f(x,v,t)dv, \quad J(x,t)=\int_{\Omega_v}\frac{v}{\sqrt{1+|v|^2}}f(x,v,t)dv.
\end{equation*}


\end{document}